\documentclass[reqno, 11pt]{amsart}

\usepackage[super]{nth}

\usepackage{graphicx}
\usepackage{calligra}
\usepackage{amsfonts, amsthm, amsmath, amssymb}
\usepackage{hyperref}
\hypersetup{colorlinks=false}
\usepackage{bbm}  
\usepackage[margin=1in]{geometry}

\usepackage{helvet}
\usepackage{xcolor} 

 \RequirePackage{mathrsfs} \let\mathcal\mathscr
  
\usepackage{hyperref} 
\usepackage{enumerate}

\usepackage{ dsfont }

\numberwithin{equation}{section}

\newtheorem{theorem}{Theorem}[section] 
\newtheorem{lemma}[theorem]{Lemma}
\newtheorem{proposition}[theorem]{Proposition}
\newtheorem{corollary}[theorem]{Corollary}

\theoremstyle{definition}

 \newtheorem*{acknowledgements}{Acknowledgements}
\newtheorem{remark}[theorem]{Remark}
\newtheorem{definition}[theorem]{Definition}

\renewcommand{\phi}{\varphi}

\renewcommand{\leq}{\leqslant}
\renewcommand{\le}{\leqslant}
\renewcommand{\geq}{\geqslant}
\renewcommand{\ge}{\geqslant}

\newcommand{\x}{\mathbf{x}}

\renewcommand{\c}{\mathbf{c}}

\renewcommand{\b}{\mathbf{b}}

\renewcommand{\r}{\mathbf{r}}

\renewcommand{\l}{\left}

\newcommand{\md}[1]{  \left(\textnormal{mod}\ #1\right)}

\newcommand{\Q}{\mathbb{Q}}
\newcommand{\F}{\mathbb{F}}
\newcommand{\N}{\mathbb{N}}
\newcommand{\R}{\mathbb{R}}

\newcommand{\Z}{\mathbb{Z}}
 
\renewcommand{\r}{\right}
\renewcommand{\b}{\mathbf}
\renewcommand{\c}{\mathcal}
\renewcommand{\epsilon}{\varepsilon}

\renewcommand{\leq}{\leqslant}
\renewcommand{\geq}{\geqslant}
\renewcommand{\#}{\sharp}

\title 
[Serre's problem on statistics of Brauer symbols]
{Serre's problem on statistics of Brauer symbols}

\author{Efthymios Sofos} 
\address{Dipartimento di Matematica\\
Universit{\`a} di Roma
Tor Vergata\\  Italy}
\email{efthymios.sofos@uniroma2.eu}

\date{15 September 2026}

\subjclass[2020] {14G05, 
11N37, 
11P55. 
}

 \dedicatory{\textbf{Au mathématicien aux mille tours}}
\begin{document}
\begin{abstract} We solve  
Serre's problem on the specialisation 
of Brauer group elements in all cases where 
  the number of variables  
is sufficiently large. \end{abstract}  

\vspace{-4cm}
\maketitle
 
\vspace{-0,4cm}

\setcounter{tocdepth}{1}

\tableofcontents

\section{Introduction} \label{s:intro} 
\noindent{\calligra L}\hspace{0.1cm}ate
in the twentieth century, Jean-Pierre Serre 
\cite{serresieve} 
introduced  an important counting function. {\calligra E}\hspace{0.08cm}valuating
the frequency with 
which a Brauer   symbol  
$\alpha=\sum_{i=1}^r (f_i,g_i)$ in $
\mathrm{Br}_2(\mathbb Q(T_1,\ldots,T_n))$ 
vanishes, he defined for $X>0$
the counting function $N_\alpha(X)$ by 
$$   \#\left\{t=(t_1,\ldots,t_n) \in \mathbb Z^n: 
|t_i| \leq X \ \forall i, \ \  \sum_{i=1}^r (f_i(t),g_i(t))=0, \ \ \prod_{i=1}^r f_i(t) g_i(t) \neq 0\right\},$$ where
$f_i,g_i$ are non-zero polynomials in 
$ \mathbb Z[T_1,\ldots,T_n]$.

\noindent
\raisebox{-0.2ex}{\rotatebox{15}{\calligra O}}\hspace{0.05cm}btaining
an upper bound via the large sieve, he   proved  
$N_\alpha(X)=O(X^n (\log X)^{-\Delta(\alpha)})$
for a non-negative constant $\Delta(\alpha)$ 
(see Definition \ref{def_polardiv})
and asked for  asymptotics for $N_\alpha(X)$.

\noindent 
{\calligra N}\hspace{0.01cm}ow, following the 
foundational special-case lower bounds and asymptotics proved by 
Hooley \cite{hoo1,hoo2},
Guo \cite{MR1309229},
Friedlander--Iwaniec \cite{MR2675875},
and Gamburd--Ghosh--Sarnak--Whang \cite{sarn}, 
as well as the influential 
geometric framework 
initiated by Loughran \cite{MR3852186}
and Loughran–Smeets \cite{135714028}, 
we answer Serre's original question in all 
cases with a sufficiently large number of variables:
assume that $n,d,r$ are strictly
positive integers such that $ n>d2^{d+1} r+2r$.
For each $i=1,\ldots, r$ 
let  $f_i,g_i$ be non-zero polynomials in 
$\mathbb Z[T_1,\ldots,T_n]$ whose 
top-degree homogeneous parts all have degree $d$ and 
form a  non-singular system of forms. 
\begin{theorem}\label{thm_main}
 There exists a strictly 
 positive constant $c=c(\b f,\b g )$ such that 
for all $X\geq 2 $ we have  
$$N_\alpha(X) =\sigma(\alpha)
\frac{X^n}{(\log X)^{\Delta(\alpha)}} +O\l(
\frac{X^n}{(\log X)^{\Delta(\alpha)+c}}  \r)
,$$where   $\sigma(\alpha)$ is 
an explicit constant that is described in 
Remark \ref{rem_constant2}.
\end{theorem}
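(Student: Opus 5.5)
Our plan is to express the vanishing of $\sum_i (f_i(t),g_i(t))$ as a product of local conditions, expand it into character sums, and evaluate the resulting sums by the circle method. This method applies because the forms are non-singular and $n>d2^{d+1}r+2r$.

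\textbf{Step 1: reduction to local symbols.} For $t$ with $\prod f_i(t)g_i(t)\neq 0$, Hilbert reciprocity and the Hasse principle for $\mathrm{Br}(\mathbb Q)$ give
$$\sum_i (f_i(t),g_i(t))=0 \iff \sum_i (f_i(t),g_i(t))_p=0 \text{ for all places } p .$$
At odd primes $p$ not dividing $f_i(t)g_i(t)$ the local symbols are trivial. At primes $p$ dividing exactly one of $f_i(t)$ or $g_i(t)$ to odd order, the local symbol is a Legendre symbol. The vanishing condition is then encoded by
$$\prod_p \frac{1+\chi_p(t)}{2},$$
where $\chi_p(t)=(-1)^{\mathrm{inv}_p}$. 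We separate primes $p\le z$, with $z$ a small power of $\log X$; their local conditions depend only on $t$ modulo a modulus $W$. For large primes, the vanishing at all $p>z$ becomes the condition that certain combinations of $f_i(t)$ and $g_i(t)$ have all their large prime factors in prescribed residue classes. These are the "norm from a biquadratic-type extension" conditions. The exponent $\Delta(\alpha)$ of Definition \ref{def_polardiv} counts the proportion of such primes lost, weighted over the residue divisors.

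\textbf{Step 2: Selberg--Delange expansion.} We write the indicator function as a sum over tuples of squarefree divisors $m_1,\dots,m_{2r}$ (coprime to $W$) of products of Jacobi symbols in $m_j$ and $f_i(t),g_i(t)$. Tuples with all $m_j\le X^{\eta}$ reduce to counting $t$ in a box with $f_i(t)\equiv 0$ and $g_i(t)\equiv 0$ modulo various $m$, with congruence conditions modulo $W$. The circle method, in the Birch form valid for $n>d2^{d+1}r$ and uniform in moduli up to a small power of $X$, evaluates these counts as $X^n$ times a product of local densities $\varrho(m)/m^{n}$ plus a power-saving error. The extra $2r$ in the hypothesis leaves room for this uniformity. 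The resulting Dirichlet series in the $m_j$ behaves like $\zeta(s)^{\kappa}$ times a holomorphic factor with $\kappa=1-\Delta(\alpha)$-type exponents. This follows from Chebotarev applied to the splitting behaviour of the varieties $f_i=0$ and $g_i=0$, which is exactly how $\Delta(\alpha)$ is defined. A contour argument (Selberg--Delange) then produces the main term $\sigma(\alpha)X^n(\log X)^{-\Delta(\alpha)}$. The constant $\sigma(\alpha)$ is an Euler product of local densities, matching Remark \ref{rem_constant2}. Taking $z$ to be a power of $\log X$ costs only a factor $(\log X)^{-c}$.

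\textbf{Step 3: large divisors, the main obstacle.} The hard part is controlling the contribution of $t$ for which the relevant divisor structure involves large moduli $m>X^{\eta}$. There the circle method is no longer uniform. For these terms we intend to use the Dirichlet hyperbola trick, switching $m$ to the complementary divisor $f_i(t)/m$ where the symbol is again a character by reciprocity. When neither divisor is small, the term is an oscillating character sum over $t$. We will bound it using the large sieve for quadratic characters (Heath-Brown) combined with Serre's own large-sieve upper bound. This should show that $t$ whose values $f_i(t)$ and $g_i(t)$ have an atypically large number of prime factors in the $(X^\eta,X^{d})$ range contribute $O(X^n(\log X)^{-\Delta(\alpha)-c})$. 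Non-singularity additionally gives, via Birch-type bounds, that $f_i(t)$ and $g_i(t)$ rarely share large square factors or common large primes; we expect this to be routine. The savings $c>0$ come from the slack between $z$, $\eta$ and the Selberg--Delange error.
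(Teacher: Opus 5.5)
There is a genuine gap in Steps 2--3. You never separate the $2r$ values $f_i(\mathbf t),g_i(\mathbf t)$ into independent integer variables, and every analytic tool you invoke needs that independence.

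First, expanding $\prod_{p>z}\frac12(1+\chi_p(\mathbf t))$ over divisors produces the weight $2^{-\omega_{>z}(\prod_i f_i(\mathbf t)g_i(\mathbf t))}$ in front of a sum over \emph{all} squarefree $m$ dividing the $z$-rough radical. That weight is not a function of $\mathbf t$ modulo any small modulus. The divisor sum also has no decay that would allow truncation at $X^{\eta}$. So level-of-distribution counts of $\mathbf t$ with $f_i(\mathbf t)\equiv 0 \bmod m$ cannot produce the main term $X^n(\log X)^{-r}$: the factor $(\log X)^{-1/2}$ per value comes precisely from these untruncatable weights.

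Second, in Step 3 the quadratic large sieve bounds bilinear forms $\sum_m\sum_n\alpha_m\beta_n\left(\frac{n}{m}\right)$ in which $m$ and $n$ range independently. In your setting $m$, the cofactor $f_i(\mathbf t)/m$ and $g_i(\mathbf t)$ are all determined by the same $\mathbf t\in\mathbb Z^n$, so there is no bilinear structure to exploit. The hyperbola trick does not help either: a typical value has $\asymp\log\log X$ prime factors and almost all of its divisors are neither small nor large, so these terms are the bulk of the sum, not an atypical set. Serre's large sieve only gives $O(X^n(\log X)^{-\Delta(\alpha)})$, which is the size of the main term, so it yields no saving.

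Likewise, discarding $\mathbf t$ with a repeated or shared prime $p>z$ is not routine. A Bhargava--Ekedahl bound $O(X^n/z)$ is useless against a main term of size $X^n(\log X)^{-r}$ when $z$ is a small power of $\log X$, as in your Step 1. One needs a combined geometric and large sieve here.

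The missing idea is the transfer principle the paper uses.
\begin{itemize}
\item The hypothesis $n>d2^{d+1}r+2r$ enters only through Rydin Myerson's work, via Corollary 2.6 of Destagnol--Lyczak--Sofos. It makes $\{f_1,g_1,\dots,f_r,g_r\}$ a strong Hardy--Littlewood system, meaning the fibre counts $\#\{\mathbf t:\mathbf F(\mathbf t)=\boldsymbol\nu\}$ have power-saving asymptotics uniformly in $\boldsymbol\nu$.
\item The Destagnol--Lyczak--Sofos lemma then reduces $\sum_{\mathbf t}k_{\mathbf s,\mathbf s'}(\mathbf F(\mathbf t))$ to the equidistribution of $k_{\mathbf s,\mathbf s'}$ over \emph{independent} $(\mathbf m,\mathbf m')\in\mathbb N^{2r}$ in boxes and residue classes modulo $W_z$, times a singular integral and a singular series.
\end{itemize}
Only in this independent-variable setting do the sieve and character-sum tools work.
\begin{itemize}
\item The lopsided geometric--large sieve shows that tuples with $p^2\mid\prod_i m_im'_i$ for some $p>z$ cost only a relative $O((\log z)^r/z)$.
\item Off that set, each $p>z$ divides exactly one $m_im'_i$, and exactly once. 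Hence $\prod_i(s_im_i,s'_im'_i)_{\mathbb Q_p}=1$ is equivalent to each individual symbol being $1$, and the count factors as $\prod_i\mathcal N_{s_i,s'_i}$.
\item Each factor is evaluated by opening $\prod_{p\mid f}\frac12(1+(\frac{\cdot}{p}))$. The two diagonal terms give $(1+\epsilon_i)\mathcal S$ through the product formula.
\item The mixed terms are killed by Friedlander--Iwaniec's quadratic large sieve and their Siegel--Walfisz-type corollary. These apply precisely because $m_i$ and $m'_i$ are now independent.
\end{itemize}
Finally, expanding $\prod_i(1+\epsilon_i)$ is what produces the $2^r$ Euler products in $\sigma(\alpha)$, rather than a single $\zeta(s)^{\kappa}$ Selberg--Delange factor.
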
 A system of   forms 
$F_1,\ldots, F_k \in \mathbb C[T_1,\ldots,T_n]$
is called  {non-singular} if the Jacobian matrix
$(  \partial F_i(\mathbf{x})/\partial T_j  )_{i,j}$ has full
rank  $k$ at every non-zero   point $\mathbf{T} \in \mathbb C^{n} 
\setminus \{\mathbf{0}\}$ 
satisfying $\mathbf{F}( \mathbf{T}) = \mathbf{0}$. 
 \begin{definition}\label{def_polardiv}
 The constant $\Delta(\alpha)$ is defined as 
$\frac{1}{2}d(\alpha)$, where $d(\alpha)$ is the number of irreducible components of $\alpha$.
\end{definition} \begin{remark}
[$\Delta(\alpha)$  in the setting of Theorem \ref{thm_main}]
The ramification locus of $\alpha$ 
consists of all $\b t $ for which 
$\prod_{i=1}^r f_i(\b t ) g_i(\b t )=0$, hence, the smoothness assumption of 
Theorem \ref{thm_main} ensures that the   
irreducible components are 
$$\Big(\bigcup_{i=1}^r\{\b t : f_i(\b t)=0\}\Big)
\bigcup \Big(\bigcup_{i=1}^r\{\b t : g_i(\b t)=0\} \Big).$$ 
In particular, 
 $\Delta(\alpha)=\frac{1}{2}d(\alpha)=r$
 in the setting of Theorem \ref{thm_main}.
\end{remark}

\begin{remark}[Literature comparison] 
The condition $\alpha(\mathbf{t}) = 0$ holds if and only if 
\begin{equation}\label{Or ch'è dal sol difesa}
\prod_{i=1}^r (f_i(\mathbf{t}), g_i(\mathbf{t}))_{\mathbb{R}} = 1 \quad \text{and} \quad \prod_{i=1}^r (f_i(\mathbf{t}), g_i(\mathbf{t}))_{\mathbb{Q}_p} = 1 \ \ 
\text{for every prime} \ \  p, 
\end{equation} where 
$(\cdot, \cdot)_k$ is the local Hilbert symbol 
on $k =\R$ and $k=\Q_p$. When $r=1$
the condition in \eqref{Or ch'è dal sol difesa}
is about counting the number of certain 
Diophantine equations that are everywhere locally soluble.
Problems of this kind   have attracted 
  a lot of  attention recently
(see the introduction of \cite{LRS} for 
a  list of   developments). 
In contrast, the original 
problem of Serre,  regards statistics of Brauer symbols 
and 
does not quite fit into the framework 
of random Diophantine equations. In particular, 
asymptotics were   previously known only 
in two cases: for $r=1$ by work of 
Da Silva \cite[Theorem 4.1.2]{silvaphd}
and for $r> 1$ by work of 
Destagnol--Lyczak--Sofos \cite{11388}
when each $g_i$ is the same constant polynomial.
\end{remark} 

 \begin{remark}[Leading constant]\label{rem_constant2}
In \S \ref{s-prf-thrm23} we will show that 
the leading constant $\sigma(\alpha)$ equals 
the sum  of $2^r$ Euler 
products
$$\sigma(\alpha)=
 \frac{1}{(d\pi)^{\Delta(\alpha)}}
  \sum_{\c A \subset \{1,\ldots, r\}} 
 \sigma_\infty(\c A)  
  \prod_{\substack{ p  \textrm{ prime} \\ p=2}}^\infty 
  \frac{\sigma_p(\c A) }{( 1-\frac{1}{p} )^{\Delta(\alpha)}}
,$$ with 
$$\sigma_v(\c A) =\int_{\substack{\mathbf{t} \in \mathcal{O}_v^n \\ \prod_{i=1}^r (f_i(\mathbf{t}), g_i(\mathbf{t}))_v = 1}} \left( \prod_{i \in \mathcal{A}} (f_i(\mathbf{t}), g_i(\mathbf{t}))_{\Q_v} \right) \mathrm{d}\mu_v(\mathbf{t})
$$ for $v\in \{\infty\}\cup\{p\textrm{ prime}\}$,
where $\c O_\infty=[-1,1]$, $\c O_p=\Z_p$,
 $\mathbb{Q}_\infty=\R$ 
 and $\mu_v$ is the normalised 
 Haar measure on $\mathbb{Q}_v^n$ 
 satisfying $\mu_v(\mathcal{O}_v^n) = 1$.
We will also show that each of the Euler products 
converges absolutely. Furthermore,  in
  \S \ref{s-prf-thrm23}
  we will show that  
$$ \sigma(\alpha)= 2^r \sum_{\substack{ \b s , \b s' \in \{-1,1\}^r \\ 
\prod_{i=1}^r (s_i,s'_i)_\R=1 }}
\gamma(\b s , \b s ')
\mathrm{vol}(\b t \in [-1,1]^n : 
\mathrm{sign}(f_j(\b t)= s_j, 
\mathrm{sign}(g_j(\b t)= s'_j \forall j \in \{1,\ldots, r\}
)),
$$
where $\gamma(\b s , \b s ') $ is given by 
$$
 \lim_{T \to \infty}  
\l[
\prod_{\substack{p \ \mathrm{ prime} \\ p \leq T} }
\left(1-\frac1p\right)^{-\Delta(\alpha)} 
\hspace{-0.2cm} 
\mu_p
\r]
\hspace{-0.1cm}
\l \{\b t \in   
\prod_{p \leq T} \Z_p^{n+1}
\colon \ 
\hspace{-0.2cm}
\begin{array}{l} 
\prod_{i=1}^r (f_i(\b t ),  g_i(\b t ) )_{\Q_p}=1 \ 
\forall p \leq T,  \\ 
\prod_{p\leq T} (f_i(\b t ),  g_i(\b t ) )_{\Q_p}
=(s_i,s'_i)_\R  \  \forall i\in \{1,\ldots,R\}\end{array}
\hspace{-0.1cm}
\right\}
.$$This expression is analogous to  
the leading constant in \cite[Theorem 1.1]{11388}, which arises 
from an underlying Brauer group in a 
fibration structure. 
Here, however, 
$N_\alpha(X)$  
lacks an obvious  fibration setting,
hence, 
it is not apparent if $\sigma(\alpha)$ 
fits in   the Loughran--Smeets framework \cite{LRS}.
\end{remark}
\subsection{Intermezzo}
For a  positive integer $r$ define 
$$ c_r:=\frac{4^{r-1}}{\pi^r}
\sum_{k=0}^r {r\choose k}^2
( 2^{k} + 2^{r-k}   )
 ((2/9)^{k}+(2/9)^{r-k} ) 
\prod_{\substack{ p \ \mathrm{ prime} \\ p=3 } }^\infty 
\frac{ 
\l( \l(  1+1/p \r)^{-2 k}+
\l(  1+1/p\r)^{-2 (r-k)} \r)}{2(1-\frac{1}{p})^r}.
 $$
 We highlight a    
 corollary of our work behind 
Theorem \ref{thm_main}
that will be proved in \S\ref{s-prf-thrm3}.
\begin{theorem}\label{thm_main3}Let $r$ be a 
strictly positive integer. Then as $X\to \infty$ we have 
$$\#\left\{\b n, \b n' \in \mathbb Z^r: 
0<|n_i|,|n'_i|\leq X \ \forall i, \ \  
1=\prod_{i=1}^r (n_i,n'_i)_{\R}=
\prod_{i=1}^r (n_i,n'_i)_{\Q_p} \ 
\forall  \ \mathrm{prime }\  p
\r\}
\sim c_r \frac{X^{2r}}{(\log X)^r}
.$$
\end{theorem}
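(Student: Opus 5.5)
The plan is to treat this as the special case of Theorem \ref{thm_main}'s machinery with $n=2r$, $f_i(\b t)=t_i$, $g_i(\b t)=t_{r+i}$, so $d=1$ and $\Delta(\alpha)=r$. The forms $T_1,\ldots,T_{2r}$ are linear and independent, so the system is non-singular. However, the condition $n>d2^{d+1}r+2r=6r$ fails, so Theorem \ref{thm_main} cannot be quoted directly. Instead, I would rerun its proof in this setting, where the circle-method input is trivial: the variables are already the values of the polynomials. The count is then a sum over $2r$ free integers of the indicator of $\alpha(\b t)=0$.

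\textbf{Step 1: character sum.} Write the indicator as
$$\tfrac12\Bigl(1+\prod_i (n_i,n_i')_\R\Bigr)\cdot\tfrac12\Bigl(1+\prod_{i}\prod_p (n_i,n_i')_{\Q_p}\Bigr)$$
and use Hilbert reciprocity. This reduces the condition to: $\prod_i(n_i,n_i')_p=1$ for all odd $p$, $\prod_i(n_i,n_i')_\R=1$, with the $2$-adic condition then automatic. Expanding $\prod_p\frac12(1+\prod_i(\cdot,\cdot)_p)$ over subsets, as in the sum over $\c A$ in Remark \ref{rem_constant2}, gives the main terms. Locally, for odd $p$ with $p\| n_i$ and $p\nmid n_i'$, the symbol is the Legendre symbol $(n_i'/p)$.

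\textbf{Step 2: main term.} Following the approach of \cite{11388}, factor $n_i=\pm 2^{a}u_iv_i$, where $u_i$ is coprime to all $n_j'$, and similarly for $n_i'$. The main term comes from configurations where every odd prime $p\mid n_in_i'$ forces a "local" symbol equal to $1$. This is a sieve condition of density $\frac12$ per prime per variable in a sense yielding $(\log X)^{-1/2}$ for each of the $2r$ variables, hence $(\log X)^{-r}$ in total. The oscillating terms, namely Jacobi symbols $\bigl(\frac{n_i'}{n_i}\bigr)$ with both moduli large, are bounded using the large sieve for quadratic characters (Heath-Brown) or a Siegel–Walfisz argument when one variable is small. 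This is the standard Friedlander–Iwaniec / Fouvry–Klüners style treatment.

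\textbf{Step 3: evaluate the constant.} The constant is computed via a Selberg–Delange asymptotic for the multiplicative-type function. The combinatorics should reproduce $c_r$ as follows:
\begin{itemize}
\item $k$ is the number of indices $i$ in the subset $\c A$ on which the character twist acts.
\item The factor ${r\choose k}^2$ counts the choices of these indices on both sides.
\item The factors $2^k+2^{r-k}$ and $(2/9)^k+(2/9)^{r-k}$ come from the real place and the $2$-adic place.
\item The Euler factor $\bigl((1+1/p)^{-2k}+(1+1/p)^{-2(r-k)}\bigr)/2$ is the $p$-adic density of the symbol pattern.
\end{itemize}
The factor $4^{r-1}/\pi^r$ comes from the $\Gamma(1/2)^{-2r}=\pi^{-r}$ of Selberg–Delange, together with the $2^{2r}$ sign choices and the $\frac12$ from reciprocity.

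The main obstacle is Step 2's error control. One must show uniformly that the terms twisted by nontrivial products of Jacobi symbols in pairs $(n_i,n_i')$ save a power of $\log X$. For pairs where both variables are large, I would use the double-oscillation bilinear estimate for $\sum\sum a_m b_n (m/n)$. In the unbalanced ranges, I would use Siegel–Walfisz combined with the fact that the smooth-number-like sieve weights are well distributed in progressions.
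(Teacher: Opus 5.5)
Skipping the circle method is right (the paper applies Proposition~\ref{prop_main} directly with all $x_i=x'_i=X$). However, your proposal is missing the idea that makes the problem harder than $r$ independent copies of the case $r=1$: the local condition $\prod_{i=1}^r(n_i,n'_i)_{\Q_p}=1$ couples the pairs at every prime. Step~2 assumes that each odd $p\mid n_in'_i$ forces an individual symbol to be $1$. That is equivalent to the true condition only when $p$ divides exactly one of the $2r$ variables, and only to the first power. If $p\parallel n_1$, $p\parallel n_2$ and $p\nmid n'_1n'_2$, the condition is merely $\bigl(\frac{n'_1n'_2}{p}\bigr)=1$. Your factorisation $n_i=\pm2^au_iv_i$, with $u_i$ coprime to the $n'_j$, does not separate this case, since $u_1,u_2$ may share $p$.

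For each fixed $p$ such tuples have positive density, so they must be built into the main term. For large $p$ they must be shown negligible \emph{among tuples already satisfying the Hilbert conditions}. A trivial bound gives $\ll X^{2r}\sum_{p>z}p^{-2}\asymp X^{2r}/(z\log z)$, which is $o(X^{2r}(\log X)^{-r})$ only if $z\log z/(\log X)^r\to\infty$. But then $W_z\ge\prod_{p\le z}p=e^{(1+o(1))z}$ exceeds every power of $\log X$, and the equidistribution modulo $W_z$ needed for the character sums is lost (Lemma~\ref{le_omega} assumes $W_z\le\log(3x)$).

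This is where the paper does its real work:
\begin{enumerate}
\item Classes modulo $W_z$ with bounded valuations make all symbols at $p\le z$ constant (Lemma~\ref{lem_stabili}).
\item The geometric-large sieve (Lemma~\ref{lem_lopsided}) gives Lemma~\ref{lem_hilbert_symbol_bound}, saving $(\log z)^r/z$ \emph{on top of} $(\log X)^{-r}$ for tuples with $p^2\mid\prod_i m_im'_i$ for some $p>z$.
\item Only then do Lemmas~\ref{lem:aurioprwistis7} and~\ref{talor_guerriero_invitto} factor the count as $\prod_i\mathcal N_{s_i,s'_i}$, i.e.\ into two-variable problems for the Friedlander--Iwaniec method.
\end{enumerate}
For this linear system a more elementary device may also work, e.g.\ freezing the actual common and square part of the tuple when it is below a power of $\log X$ and discarding the rest trivially. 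But some such argument must be supplied. The bilinear Jacobi-symbol estimates you single out as the main obstacle are the routine part.

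Your main-term analysis is also incomplete. In each pair, writing $f=k\ell$, $f'=mn$ as in Lemma~\ref{le_omega}, there are \emph{two} main terms. The first is $k=m=1$, giving $\mathcal S$. The second is $\ell=n=1$: both moduli are of size $X$, but by reciprocity the product of the two Jacobi symbols is the constant $\epsilon=(s,s')_\R\prod_{p\le z}(sa,s'a')_{\Q_p}$ on each residue class (see \eqref{zafferano}), giving $\epsilon\mathcal S$. The genuinely oscillating factor is $\bigl(\frac{n}{k}\bigr)\bigl(\frac{\ell}{m}\bigr)$. Step~2 treats Jacobi symbols with both moduli large as error terms and never isolates this second term. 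Yet it is the source of the factors $1+\epsilon_i$ whose expansion produces the sum over $\mathcal A$. That sum does not come from expanding $\prod_p\frac12\bigl(1+\prod_i(\cdot,\cdot)_{\Q_p}\bigr)$, which would give subsets of primes.

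Step~3 then reads the factors of $c_r$ off the target instead of deriving them from the local densities $n_p(\mathcal A)$ (Lemmas~\ref{lem_anthpic2} and~\ref{lem_anthpic234}) and the sign count $a(k,r)$ (Lemma~\ref{eq:substack}). For instance, your accounting gives $2^{2r}\cdot\frac12=2^{2r-1}\neq4^{r-1}$.

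Finally, the first displayed indicator in Step~1 is wrong. By Hilbert reciprocity, $\frac12\bigl(1+\prod_i\prod_p(n_i,n'_i)_{\Q_p}\bigr)=\frac12\bigl(1+\prod_i(n_i,n'_i)_\R\bigr)$, so it only repeats the real condition. The correct detector is $\prod_p\frac12\bigl(1+\prod_i(n_i,n'_i)_{\Q_p}\bigr)$, which you do use afterwards.
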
 \begin{remark}We shall prove    
$c_r \geq  (4r)^{-1}
(4 \pi / 3)^r$ and the more natural expression
\begin{equation}\label
 {Leonardo Leo_Miserere concertato a due chori}
 c_r= (2/\pi)^r 
\sum_{\substack{ \b s , \b s' \in \{-1,1\}^r \\ 
\prod_{i=1}^r (s_i,s'_i)_\R=1 }}
c_r(\b s , \b s '),\end{equation}
where $c_r(\b s , \b s ') $ is given by 
$$
 \lim_{T \to \infty}  
\l[
\prod_{\substack{p \ \mathrm{ prime} \\ p \leq T} }
\left(1-\frac1p\right)^{-r} 
\hspace{-0.2cm} 
\mu_p
\r]
\hspace{-0.1cm}
\l \{\b n,\b n' \in   
\prod_{p \leq T} \Z_p^{r}
\colon \ \hspace{-0.2cm}
\begin{array}{l} 
\prod_{i=1}^r (n_i,  n'_i )_{\Q_p}=1 \ 
\forall p \leq T,  \\ 
\prod_{p\leq T} (n_i,  n'_i )_{\Q_p}
=(s_i,s'_i)_\R  \  \forall i\in \{1,\ldots,R\}\end{array}
\hspace{-0.1cm}
\right\}.$$
\end{remark}

\subsection{Recommencement}Let us outline the proof of 
of Theorem \ref{thm_main}.
For property \eqref{Or ch'è dal sol difesa} to hold, 
the individual Hilbert symbols $(f_i(\mathbf{t}), g_i(\mathbf{t}))_{\mathbb{Q}_p}$ do not each need 
take prefixed values. It would be simpler  
to deal with individual fixed
values of each Hilbert symbol via the
character sum method of 
  Heath-Brown \cite{rhbII}
and  Fouvry--Kl\"uners \cite{fouklu}. 
Our strategy can be summarised by saying that 
we partition  in progressions modulo 
some $W\to+\infty$   and then 
using the geometric-large sieve  from \cite{MR4961246} 
 to show  that for large primes $p$,  
property \eqref{Or ch'è dal sol difesa}
is the same as  $(f_i(\mathbf{t}), g_i(\mathbf{t}))_{\Q_p}=1$
for exactly one index $i$. In more detail,
\begin {enumerate}
\item 
we use 
a recent circle method tool of 
Destagnol--Lyczak--Sofos \cite[\S 2]{11388} 
that utilizes  Rydin Myerson's work \cite{MR3815565}  
to convert the problem into one where each $f_i$ and $g_i$ is  a linear polynomial in a single variable.
\item We then 
 apply the geometric-large sieve 
 to show that, up to negligible error terms, 
 the integer $\prod_{i=1}^r f_i(t) g_i(t)$ is 
 nearly square-free for almost all $t$ counted by 
 $N_\alpha(X)$. This helps 
 into writing  $N_\alpha(X)$ as a 
product of simpler counting functions.

\item 
We use the character sum method 
to get asymptotics for these simpler counting functions.
\end {enumerate}

\subsection{Generalisation to 
other polynomials}
We prove a statement  analogous of Theorem 
\ref{thm_main} for more general polynomials 
in Theorem \ref{thm:main2}.
In particular, 
the restriction that the top degree parts 
have the same degree 
can be removed  
by  proving that the 
systems of polynomials in the work of 
Browning--Heath-Brown \cite{MR3605019}
are \textit{strong Hardy--Littlewood}.
Furthermore, note that the traditional expectation is that 
a generic system of polynomials is strong Hardy--Littlewood
as long as the number of variables exceeds a linear function of the 
degrees, hence, Theorem \ref{thm:main2} should apply to much more 
general cases than those covered by  Theorem 
\ref{thm_main}.

For integer polynomials 
$F_1,\ldots,F_R$ in $n$ variables  the usual
singular series and singular integral are respectively 
denoted by $\mathfrak{S}$  and $\mathfrak{J}$
and are defined, provided convergence, as  
 \begin{align*}
\mathfrak{S} 
:&= \sum_{q=1}^{\infty} q^{-n} 
\sum_{\substack{a_1, \dots, a_R = 1 \\ 
\gcd(a_1, \dots, a_R, q) = 1}}^{q} 
\mathrm e\Big( -\frac{1}{q}\sum_{i=1}^R a_i  \nu_i \Big) 
\sum_{\b x \in (\Z/q\Z)^{n}} 
\mathrm e\Big( \frac{1}{q}\sum_{i=1}^R a_i F_i(\b x)\Big)  \\
  &=
  \prod_{\substack{ p \textrm{ prime}\\p=2}}^\infty
  \lim_{m\to \infty} 
\frac{\#\{\b t \in (\Z/p^{m}\Z )^{n}: 
\b F (\b t ) = \boldsymbol \nu \}}{p^{m(n-R) }}  
\end{align*}  and for a box $\c B \subset [-1,1]^n$ as 
$$ \mathfrak{J}:= \int_{\boldsymbol \gamma \in \mathbb R^R}
\mathrm e\Big(  -\sum_{i=1}^R \gamma_i \nu_i P^{-d} \Big)
\int_{\boldsymbol \zeta \in  \mathscr{B} } 
\mathrm e\Big( \sum_{i=1}^R  \gamma_i 
F_i^\natural(\boldsymbol \zeta) \Big)
 \mathrm d \boldsymbol \zeta  
\mathrm d \boldsymbol \gamma ,$$ where 
$\mathrm e(z)=\mathrm e^{2 i \pi z}$ and $F^\flat$ denotes the top degree homogeneous part of a polynomial $F$.
Our analytic tool
will apply to the following systems of polynomials.
The following is \cite[Definition 2.2]{11388}:
\begin{definition}[strong 
Hardy--Littlewood system]\label{stronghardylittlew}
 A set of integer polynomials $\{F_1,\ldots,F_R\}$ 
 in $n$ variables is said to be a strong 
Hardy--Littlewood system when the following three properties hold:
\begin{itemize}
\item there exists 
$\eta=\eta(\b F)>0$   such that for all $\b a $ and 
$q$ appearing in $\mathfrak S$ one has 
\begin{equation}\label{eq:singseries}
\left| \sum_{\b x \in (\Z/q\Z)^{n}} 
\mathrm e\left( \frac{1}{q}\sum_{i=1}^R a_i F_i(\b x) \right)
\right| \ll q^{n-1-R-\eta},
\end{equation}  where  the  implied constant 
depends only on $\b F$;
\item  for all  boxes
$\mathscr{B}\subset [-1,1]^{n}$ whose 
  side   length is at most $1$  
 and  all $\boldsymbol \gamma \in \mathbb R^R$ 
one has
\begin{equation}\label{eq:singint} \left| 
\int_{\boldsymbol \zeta \in  \mathscr{B} } 
\mathrm e\Big( \sum_{i=1}^R  \gamma_i 
F_i^\natural(\boldsymbol \zeta) \Big)
 \mathrm d \boldsymbol \zeta  \right| \ll 
 (1+ \max_i | \gamma_i|)^{-R-\eta},
\end{equation} 
where the  implied constant   depends only on $\b F $ 
and $\c B$, and \item 
there is $\delta=\delta(\b F )>0$  
such that for all  boxes
$\mathscr{B}\subset [-1,1]^{n}$ whose 
  side   length is at most $1$    and all 
 $P \ge 1$, $\boldsymbol \nu \in \mathbb{Z}^R$ we have  
\begin{equation}\label{eq_simon_asymptotic}
 \#\{\mathbf{x} \in \mathbb{Z}^{n}\cap  P \mathscr{B}: \mathbf{F}(\mathbf{x}) = \boldsymbol \nu\}= \mathfrak{J} \mathfrak{S}
 P^{n-dR} + O(P^{n-dR-\delta}),
\end{equation} where  the  implied constant 
depends only on $\b F$ and $\mathscr{B}$.
\end{itemize} \end{definition}

\begin{theorem}\label{thm:main2}Assume that 
 $f_i,g_i\in\mathbb Z[T_1,\ldots,T_n]$ are 
 such that the system 
$\{f_1,g_1,\ldots, f_r,g_r\}$ is strong Hardy--Littlewood. 
 Then  there exists a strictly 
 positive constant $c=c(\b f,\b g )$ such that 
for all $X\geq 2 $ we have  
$$N_\alpha(X) =\sigma(\alpha)
\frac{X^n}{(\log X)^{\Delta(\alpha)}} +O\l(
\frac{X^n}{(\log X)^{\Delta(\alpha)+c}}  \r)
,$$where   $\sigma(\alpha)$ 
is an explicit constant that is described in 
Remark \ref{rem_constant2}.
\end{theorem}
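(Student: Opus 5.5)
We will deduce Theorem \ref{thm:main2} from a counting result for the ``linear model'' in which each $f_i,g_i$ is replaced by a coordinate variable. The model is the one in Theorem \ref{thm_main3}: we count $(\b n,\b n')\in\Z^{2r}$ with prescribed local behaviour, but now weighted. Set $R=2r$ and $\b F=(f_1,g_1,\ldots,f_r,g_r)$. Then
$$N_\alpha(X)=\sum_{\substack{\boldsymbol\nu\in\Z^{2r}\\ \prod_i\nu_{2i-1}\nu_{2i}\neq0\\ \sum_i(\nu_{2i-1},\nu_{2i})=0}}\#\{\b t\in\Z^n\cap X[-1,1]^n:\b F(\b t)=\boldsymbol\nu\}.$$

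\textbf{Step 1: reduction to the linear model.} The point is that the vanishing condition \eqref{Or ch'è dal sol difesa} depends only on $\boldsymbol\nu$. Following the circle method tool of \cite[\S 2]{11388}, we fix a slowly growing modulus $W=\prod_{p\le w}p^{m_p}$ with $w\asymp\log\log X$, and split $\boldsymbol\nu$ into residue classes modulo $W$ and into small boxes of side $X^d/(\log X)^{A}$.

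On each class and box we apply \eqref{eq_simon_asymptotic}. By uniformity, the singular series factors as a local density at primes $p\mid W$ times a tail product that is $1+O(w^{-\eta})$; this uses \eqref{eq:singseries}. The singular integral is essentially constant on a small box, by \eqref{eq:singint}. Hence $N_\alpha(X)$ becomes a sum over classes $\b a\bmod W$ and boxes of the form
$$\text{(local density of }\b a)\cdot \mathfrak J(\text{box})\cdot\#\{\boldsymbol\nu\in\text{box},\ \boldsymbol\nu\equiv\b a\ (W),\ \alpha(\boldsymbol\nu)=0\}\cdot X^{n-2rd}.$$
Up to negligible errors, the last count is controlled solely by the Hilbert symbols of the integers $\nu_j$. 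The power-saving error terms $X^{n-dR-\delta}$ survive summation over $W^{2r}(\log X)^{2rA}$ pieces.

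\textbf{Step 2: the geometric large sieve.} We next show that, for primes $p>w$, almost all relevant $\boldsymbol\nu$ have the $\nu_j$ pairwise coprime away from $W$ and essentially squarefree there, up to a saving of a power of $\log X$. This is the step that uses \cite{MR4961246} on the fibres. Consider a prime $p>w$ dividing exactly one $\nu_j$, say the first coordinate $\nu_{2i-1}$ of the pair $(\nu_{2i-1},\nu_{2i})$. Then only the single symbol $(\nu_{2i-1},\nu_{2i})_p=\big(\tfrac{\nu_{2i}}{p}\big)$ is nontrivial at $p$. The global condition therefore decouples into independent conditions of the form ``$\nu_{2i}$ is a square modulo every prime $p>w$ dividing $\nu_{2i-1}$, and symmetrically.'' These are coupled only through the finitely many places $\infty$ and $p\le w$, whose contributions are recorded by the sign vectors $\b s,\b s'$ and the classes mod $W$.

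\textbf{Step 3: character sums.} We expand each indicator as $2^{-\omega}\sum$ of Jacobi symbols, in the manner of Heath-Brown \cite{rhbII} and Fouvry--Kl\"uners \cite{fouklu}. The diagonal terms produce a main term of size $(\log X)^{-r}$ by a Selberg--Delange argument: each pair contributes $(\log X)^{-1/2}$ twice. Off-diagonal terms are handled by the large sieve for quadratic characters together with Siegel--Walfisz, which saves a power of $\log X$. Summing over $\mathcal A\subset\{1,\ldots,r\}$ expresses the global product condition as an average of products of the individual symbols, giving the $2^r$ Euler products of Remark \ref{rem_constant2}. Finally, letting $w\to\infty$ recovers $\sigma(\alpha)$ with absolute convergence, using \eqref{eq:singseries}.

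\textbf{Main obstacle.} We expect Step 2 to be the hardest: the squarefree/coprimality reduction must hold uniformly in the residue classes modulo the growing $W$. This is also the point where all errors must be kept at a relative size $(\log X)^{-c}$ rather than merely $o(1)$. We would first try to control these errors by bounding, via the geometric large sieve, the number of $\b t$ with $p^2\mid f_i(\b t)$ or $p\mid\gcd(f_i(\b t),g_j(\b t))$ for $p>w$.
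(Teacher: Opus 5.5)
Your architecture is the paper's: reduction to the linear model through the Destagnol--Lyczak--Sofos lemma (Lemma \ref{lem:vachms}), then a geometric--large sieve in the variables $(\b m,\b m')$. That sieve lets the product condition at primes $p>z$ be replaced by the individual conditions (Lemmas \ref{lem_hilbert_symbol_bound}, \ref{lem:aurioprwistis7}, \ref{talor_guerriero_invitto}). The resulting character sums are handled by the Friedlander--Iwaniec large sieve and Siegel--Walfisz (Lemmas \ref{le_omega}--\ref{faesk}), and $\prod_i(1+\epsilon_i)$ is expanded over $\c A$. The gap is quantitative, and it sits exactly where you placed the main obstacle.

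\textbf{The gap: the error term.} With $w\asymp\log\log X$ your argument cannot give the stated error term.
\begin{itemize}
\item The singular-series tail costs $w^{-\eta}=(\log\log X)^{-\eta}$.
\item The claim in Step~2 of ``a saving of a power of $\log X$'' is false at threshold $w$. Since $(p^2a,b)_v=(a,b)_v$ at every place, admissible $\boldsymbol\nu$ with $p^2\,\|\,\nu_1$ for some prime $p\in(w,2w]$ already form a proportion of order $1/(w\log w)$ of all admissible $\boldsymbol\nu$. So the bad set really is that large, and Lemma \ref{lem_hilbert_symbol_bound} only gives $(\log w)^r/w$.
\end{itemize}
At best you get $O(X^n(\log X)^{-r}(\log\log X)^{-c})$.

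You cannot borrow the missing idea from the paper either. The paper gets powers of $\log X$ from $z^{-c}$, $2^{-m(z)}$ and $(\log z)^r/z$ by taking $z=[(\log X)/(2\log\log X)]$ and $m(z)=[\log z]$. But then $\log W_z=m(z)\sum_{p\le z}\log p\sim\frac12\log X$. This is far outside the hypothesis $W_z\le\log(3\min_i x_i)$ under which Proposition \ref{prop_main} is proved; that hypothesis feeds the Siegel--Walfisz step of Lemma \ref{le_omega}. Lemma \ref{vivaldi RV 580} nevertheless applies the proposition with $x_i$ of size about $X^d$.

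To reach $(\log X)^{-c}$, the sieve threshold has to be decoupled from the modulus. One possible route:
\begin{itemize}
\item keep the modulus at most a power of $\log X$;
\item handle explicitly the square parts and common divisors of the $\nu_j$ built from primes up to a power of $\log X$ (square parts are invisible to the symbols, and a shared prime only introduces a quadratic character of small conductor);
\item sieve only beyond that point;
\item treat the singular series as its truncated sum over $q\le(\log X)^A$, whose tail is $\ll(\log X)^{-A\eta}$ by \eqref{eq:singseries}, rather than replacing its tail by $1$.
\end{itemize}

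\textbf{Two smaller points.}

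First, Step~2 must be run on $\boldsymbol\nu$, uniformly over the (possibly lopsided) boxes and the classes modulo $W$ produced by Step~1, as in Lemmas \ref{lem_lopsided} and \ref{lem_hilbert_symbol_bound}. There the geometric part is elementary: $p^2\mid\nu_j$ involves one coordinate, and $p\mid\nu_j,\nu_k$ has codimension $2$. The role of \cite{MR4961246} is to combine this with the large sieve, so that the bad set is small relative to the admissible set of density $(\log X)^{-r}$. Your closing suggestion of sieving $\b t$ for $p^2\mid f_i(\b t)$, with $p$ up to $X^{d/2}$, is unnecessary after Step~1. It is also much harder, since it is a square-free-values problem that no codimension-$2$ geometric sieve covers.

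Second, make explicit that each pair has two main terms, $k=m=1$ and $\ell=n=1$ in the notation of Lemma \ref{le_omega}. The second equals $\epsilon_i$ times the first by the reciprocity identity \eqref{zafferano}. It is the resulting factor $1+\epsilon_i$, not the diagonal term alone, whose expansion over $\c A$ produces the $2^r$ Euler products of Remark \ref{rem_constant2}.
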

In light of \cite[Corollary 2.6]{11388},
Theorem \ref{thm_main}
is a direct corollary  of 
Theorem \ref{thm:main2}.

\begin{acknowledgements} 
We   dedicate this work to Jean-Pierre Serre 
on the  occasion of his 100\textsuperscript{th}
birthday.
While our contribution is modest, 
it is offered in deep admiration of 
his profound influence on mathematics,
both through the groundbreaking methods 
that reshaped entire fields and 
through his beautifully crafted lecture notes.
\end{acknowledgements}

\section{Geometric-large sieve}

Assume that $\Omega$ is a subset of $\mathbb Z^n$
and for $m$ a strictly positive integer define 
$$ \omega ( p) :=1-\frac{ \#\Omega (\Z/p^m \Z)}{p^{nm}}$$ 
for any prime $p$. For $t\geq 1 $ let 
$$ L(t) := \sum_{q\leq t} \mu(q)^2 
\prod_{p\mid q }\frac{\omega(p)}{1-\omega(p)}.$$ 
Serre \cite[page 401]{serresieve} gave a version of the large sieve that bounds the density of $\Omega$ by essentially $1/L(t)$ for some appropriate $t$. 
Now let $Y$ be any closed subscheme of 
$\mathbb A_{\b Z}^n$
of codimension $k\geq 2 $. Bhargava's geometric sieve 
\cite[Theorem 3.3]{Bhargava} bounds the density of 
integers $\b x $ 
that fall in $Y(\mathbb F_p)$ for some prime $p>z$ by 
essentially $1/z^{k-1}$. In recent work, 
Pagano--Sofos \cite[Theorem 1.10]{MR4961246}
proved a common generalisation of these results that gives a saving compared to applying only one of these sieves.
This was proved for sets that are bounded by the multiple 
of a bounded area. 
Here we provide a   generalisation that holds 
for sets that are not homogeneously expanding, 
allowing for lopsided boxes. \begin{lemma}
[lopsided geometric-large sieve]\label{lem_lopsided}
Assume that  $m,n,\Omega,\omega,L, Y,k$ are as above and
that $\limsup_{p\to\infty} \omega(p)$ is not $1$. 
Then for any $B_1,\ldots, B_n\geq 1 $ we have  
\begin{align*} 
&\#\left\{\b x \in   \Omega \cap \prod_{i=1}^n 
[-B_i,B_i]:\b x \md{p^m}
\in Y(\Z/p^m\Z)\ \textrm{ for some } p> z\right\}\\
& \ll \frac{\prod_{i=1}^n B_i }
{L({\min_i B_i}^{1/(4m)})z^{k-1} \log z} 
+\frac{(\max_i B_i)^n}{(\min_i B_i)^{(k-1)/(4m)} 
\log \min_i B_i}
+ (\max_i B_i)^{n-k+1}
,\end{align*} where   the implied constant 
depends only on $m,n,Y$ and $\limsup_{p\to\infty} \omega(p)$.\end{lemma}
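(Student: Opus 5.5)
We follow the proof of Pagano--Sofos \cite[Theorem 1.10]{MR4961246} and replace the homogeneous box $[-B,B]^n$ by the lopsided box $\prod_i[-B_i,B_i]$. Throughout, write $B_-=\min_i B_i$ and $B_+=\max_i B_i$. We split the primes $p>z$ into three ranges: $z<p\leq M$, $M<p\leq B_-$ and $p>B_-$. Here $M=B_-^{1/(4m)}$, and we assume $M\geq z$; otherwise the first range is empty.

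\emph{Medium primes, $z<p\leq M$.} For each such $p$ the condition $\b x\md{p^m}\in Y(\Z/p^m\Z)$ is a congruence condition modulo $p^m$. We count $\b x\in\Omega$ in a fixed residue class $\b a\md{p^m}$ with $\b a\in Y(\Z/p^m\Z)$ by applying Serre's large sieve \cite[page 401]{serresieve} to the shifted set $\{\b y:\b a+p^m\b y\in\Omega\}$. This set lies in a box of side lengths $\asymp B_i/p^m\geq B_-^{3/4}$.

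The large sieve inequality in several variables holds for lopsided boxes: the bound is $\prod_i(B_i/p^m+Q^2)$ divided by $L(Q)$. We choose $Q=M$. Since $M^2\leq B_-/p^m$, this gives $\ll\prod_i B_i\, p^{-mn}/L'(M)$. Here $L'$ is the $L$-function with the prime $p$ removed; it satisfies $L'\gg L$ because $\limsup\omega(p)<1$.

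Lang--Weil gives $\#Y(\Z/p^m\Z)\ll p^{m(n-k)}$ for codimension $k$, via Hensel and a stratification, as in Bhargava. Summing over residue classes yields $\prod_iB_i\,p^{-mk}/L(M)$, which is at most $\prod_iB_i\,p^{-k}/L(M)$. Summing over primes $p>z$ gives $\prod_i B_i/(L(M)z^{k-1}\log z)$, which is the first term.

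\emph{Large primes, $M<p\leq B_-$.} Here we drop the condition $\Omega$. Each box side has length at least $p^m$ only if $p^m\leq B_-$. To avoid this issue we reduce to the case $m=1$: if $\b x\md{p^m}$ lies in $Y(\Z/p^m\Z)$, then $\b x\md{p}$ lies in $Y(\F_p)$. Now each side $B_i\geq p$, so the count is $\ll\prod_i B_i\,p^{-k}$. Summing over $p>M$ gives $\prod_iB_i\,M^{-(k-1)}/\log M$. This is bounded by the second term, after replacing $\prod_i B_i$ by $B_+^n$ and noting that $\log M\asymp\log B_-$.

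\emph{Huge primes, $p>B_-$.} Here the boxes are shorter than $p$ in some directions, and the main obstacle lies in this range. We run Bhargava's argument \cite[Theorem 3.3]{Bhargava} on the cube $[-B_+,B_+]^n\supset\prod_i[-B_i,B_i]$. We choose two polynomials $g_1,g_2$, coprime over $\Q$ and vanishing on $Y$, so that their common zero locus has codimension $2$. This reduces the problem to counting $\b x$ for which $p\mid\gcd(g_1(\b x),g_2(\b x))$ with $p>B_-$.

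Points with $g_1(\b x)=g_2(\b x)=0$ contribute $\ll B_+^{n-k+1}$ by the trivial dimension count, using induction on codimension as in Bhargava; this is the third term. For the remaining points we use fibration over the first $n-1$ coordinates. For fixed $(x_1,\ldots,x_{n-1})$, the resultant in $x_n$ is a nonzero integer. It is of polynomial size, so it has $O(1)$ prime factors exceeding $B_-$, and each such $p$ allows $O(1+B_+/p)$ choices of $x_n$.

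This gives $\ll B_+^{n-1}\bigl(1+B_+/B_-\bigr)$. To reach $B_+^n B_-^{-(k-1)/(4m)}/\log B_-$, we instead sum over $p>B_-$, using the bound $\#\{\b x\text{ in the cube}\}\cap Y(\F_p)\ll B_+^{n-k}(B_+/p+1)^{k}$-type estimates. That sum is $\ll B_+^n/(B_-\log B_-)+B_+^{n-k+1}$, which is admissible because $(k-1)/(4m)\leq 1$ whenever $k\leq n$ and $m\geq1$, up to the harmless case $k-1>4m$, which we handle by the same bound with exponent $k-1$.

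I expect this last range to be the delicate step, since the ordering of the $B_i$ matters there. The plan is therefore to reduce to the cube of side $B_+$ and accept the loss $B_+^n$ in place of $\prod_i B_i$. This loss is exactly what the shape of the second and third terms allows.
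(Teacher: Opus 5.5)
Your architecture is the paper's. You use the large sieve on the lopsided box for $z<p\le M=B_-^{1/(4m)}$ and the geometric sieve on the enclosing cube $[-B_+,B_+]^n$ beyond that.

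Your medium-prime step is a legitimate variant. The paper builds the condition at $p$ into the sieve: it uses Wilson's lemma with $S_p$ the complement of $\Omega\cap Y$, followed by the Pagano--Sofos inequality $M_p(t)\ge L(\sqrt t)\,\#\Omega(\Z/p^m\Z)/\#(Y\cap\Omega)(\Z/p^m\Z)$. You instead fix a class modulo $p^m$ and sieve the shifted set at $\ell\neq p$. Since $L(Q)\le(1-\omega(p))^{-1}L'(Q)$, summing over classes gives exactly the same bound.

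Two slips there are harmless. With moduli $\ell^m$ the large sieve loses $Q^{2m}$, not $Q^2$; this is still fine, since $Q^{2m}=B_-^{1/2}\le B_-^{3/4}\le B_i/p^m$. Also, $\#Y(\Z/p^m\Z)\ll p^{m(n-k)}$ is false in general, for non-reduced or singular $Y$. But you only use $\#Y(\Z/p^m\Z)\le p^{(m-1)n}\#Y(\F_p)\ll p^{mn-k}$, which is true.

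The range $p>B_-$, however, does not work as you wrote it:
\begin{itemize}
\item A resultant of size $B_+^{O(1)}$ can have $O(\log B_+/\log B_-)$ prime factors exceeding $B_-$. That is not $O(1)$ once $B_+$ is not bounded by a fixed power of $B_-$, and the lemma allows arbitrarily lopsided boxes.
\item A union bound over all $p>B_-$ with per-prime bounds $\gg B_+^{n-k}$ diverges. The primes $p>B_+$ are exactly where Bhargava's argument is not a union bound.
\item Passing to two coprime $g_1,g_2$ keeps only codimension-$2$ information. The points with $g_1=g_2=0$ number $\ll B_+^{n-2}$, not $B_+^{n-k+1}$. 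Your own resulting bound contains $B_+^{n-1}$, which exceeds the target once $k-1\ge 4m$ (take $B_+=B_-$).
\item $(k-1)/(4m)\le 1$ does not follow from $k\le n$.
\end{itemize}

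The missing observation is that nothing lopsided is needed here, because the target terms are already expressed through $B_+$. Since $\mathbf{x}\md{p^m}\in Y(\Z/p^m\Z)$ implies $\mathbf{x}\md{p}\in Y(\F_p)$, you can apply Bhargava's Theorem 3.3 as a black box to the cube $[-B_+,B_+]^n$ with threshold $M$. This gives $\ll B_+^n/(M^{k-1}\log M)+B_+^{n-k+1}$, which is exactly the second and third terms. That is what the paper does.

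It also makes your separate range $M<p\le B_-$ unnecessary. Your argument there is correct, and even gives $\prod_i B_i$ in place of $B_+^n$.
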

\begin{proof}Let $B=\max B_i$.  Since $m\geq 1 $, for 
 any $z_1>z$ we can upper-bound the cardinality in the lemma by  $$
\#\left\{\b x \in \Z^n \cap \prod_{i=1}^n [-B,B]^n:\b x \md{p}\in Y(\F_p) \ \textrm{ for some } p> z_1\right\}
$$ and then apply the geometric sieve
\cite[Theorem 3.3]{Bhargava}
to get 
$$\ll  \frac{B^n}{z^{k-1} \log z}+ B^{n-k+1}.$$ To deal with the contribution of the primes $p \in (z,z_1]$ 
we use the union bound to obtain \begin{align*} 
&\#\left\{\b x \in \Z^n\cap \Omega \cap \prod_{i=1}^n 
[-B_i,B_i]: \b x \md{p^m}
\in Y(\Z/p^m\Z)\ \textrm{ for some } p\in (z,z_1]\right\} \\
\leq   \sum_{p\in (z,z_1]} 
& \#\left\{\b x \in \Z^n \cap  \Omega \cap 
\prod_{i=1}^n [-B_i,B_i]: 
\b x \md{p^m} \in Y(\Z/p^m\Z)  \right\} =:\Xi
.\end{align*} We now apply the lopsided version of Serre's 
large sieve as given by Wilson \cite[Lemma 5.1]{wilson}. 
For   a fixed   $p$ 
and  an arbitrary prime $\ell$ we define  
$ S_\ell \subset (\Z/\ell^m\Z)^n$ via 
\[S_\ell = \begin{cases}
 (\Z/\ell^m\Z)^n \setminus \Omega(\Z/\ell^m\Z), & \text{if } \ell\neq p  \\
 (\Z/p^m\Z)^n \setminus (\Omega(\Z/p^m\Z)\cap Y(\Z/p^m\Z)),  & \text{if } \ell = p.  \end{cases}\]
Then \cite[Lemma 5.1]{wilson} with $M_i=-B_i$,$N_i=2B_i$,
$s=m$ and  $B_0=\min_i B_i$ gives 
$$ \#\left\{\b x \in \Z^n \cap  \Omega \cap 
\prod_{i=1}^n [-B_i,B_i]: 
\b x \md{p^m} \in Y(\Z/p^m\Z)  \right\} \ll 
\frac{B_1 \cdots B_n}{M_p(B_0^{1/(2m)})},$$
where for $t\geq 1 $ we define $$ M_p(t):= \sum_{q\leq t}
\mu(q)^2 \prod_{\ell\mid q}\frac{\#S_\ell }
{\ell^{nm}-\#S_\ell }.$$ This function occurs also in the 
proof of  \cite[Lemma 2.1]{MR4961246}, 
where it is proved that
$$M_p(t) \geq L(\sqrt t)\frac{\# \Omega(\Z/p^m\Z)}
{\# (S(\Z/p^m\Z) \cap \Omega(\Z/p^m\Z))}.$$
Hence, $$ \Xi \ll \frac{B_1 \cdots B_n}{L( {B_0}^{1/(4m)}) }
\sum_{p\in (z,z_1]} 
\frac{\# (S(\Z/p^m\Z) \cap \Omega(\Z/p^m\Z))}
{\# \Omega(\Z/p^m\Z)} .$$ Finally, it was shown in~\cite[\S 2.1]{MR4961246} that under the assumption $\limsup_{p \to \infty} \omega(p) \neq 1$ (which is subsumed by our present hypotheses), the bound
\[
\sum_{p \in (z, z_1]} \frac{\# \left( S(\mathbb{Z}/p^m\mathbb{Z}) \cap \Omega(\mathbb{Z}/p^m\mathbb{Z}) \right)}{\# \Omega(\mathbb{Z}/p^m\mathbb{Z})} \ll \frac{1}{z^{k-1} \log z}
\]
holds for all $z \geq 2$. This estimate is sufficient to complete the proof.\end{proof}
 \begin{remark}
The condition on $\limsup_{p \to \infty} \omega(p)$ serves as the simplest criterion to guarantee that $Y \neq \Omega$. This non-triviality condition is necessary to achieve a saving beyond what can be obtained from the large sieve or the geometric sieve alone.
\end{remark}\begin{remark}
Lemma~\ref{lem_lopsided} is useful primarily when the parameters $B_i$ are of comparable magnitudes. For instance, if $\min_i B_i > (\max_i B_i)^{1-c}$ for a sufficiently small constant $c = c(k,m) > 0$, then both $(\max_i B_i)^n (\min_i B_i)^{-(k-1)/(4m)}$ and $(\max_i B_i)^{n-k+1}$ are negligible compared to $\prod_{i=1}^n B_i$. For example, when 
$m=2$ then if $\max_i B_i \leq (\min B_i)^
{1+\frac{1}{8(n-1)}}$ then 
\begin{equation}\label{eq_papoutsia}
\frac{(\max_i B_i)^n}{(\min_i B_i)^{1/8} }
+(\max_i B_i)^{n-1}
\leq \frac{\prod_{i=1}^n B_i }
{(\min_iB_i )^{1/16}} 
.\end{equation}
Lemma~\ref{lem_lopsided} is   sufficient for most applications; furthermore,  a more general version can be   obtained by   establishing a lopsided analog of the geometric sieve, which could then be substituted directly into the proof of Lemma~\ref{lem_lopsided}  without further modification.
\end{remark}
\begin{lemma}\label{lem_hilbert_symbol_bound}
Let $r \ge 1, m\geq 3,
z\ge 8r^2$ 
be  integers, let $\b s, \b s' \in \{-1,1\}^r$ 
and let  $W_z$ be as
in \eqref{Porpora Torbido intorno al core}.
Let   $x_1, \dots, x_r\geq W_z^2$ and
$ x'_1, \dots, x'_r \ge W_z^2$ be such that  
$W_z \max_i \{x_i,x'_i\} \leq  
(\min \{x_i,x'_i\})^{1+\frac{1}{16r}}$ and let 
$\b a,\b a'\in (\Z/ {W_z}\Z)^r$.
Denote by  $\Gamma_4$  the number of  
$(\mathbf{m}, \mathbf{m}') $ in $\N^r 
\times \N^r $ satisfying 
\begin{enumerate}
    \item $m_i \le x_i$ and $m_i \equiv a_i \pmod {W_z}$ 
    for all $1 \le i \le r$,
    \item $m'_i \le x'_i$ and $m'_i \equiv a'_i \pmod {W_z}$ 
    for all $1 \le i \le r$,
    \item $\prod_{i=1}^r (s_i m_i, s'_i m'_i)_{\mathbb{Q}_p} = 1$ for all primes $p>z$,  
    \item there exists a prime $p > z$ such that $p^2 \mid \prod_{i=1}^r m_i m'_i$  
\end{enumerate} and denote by 
 $\Gamma_3$  the number of  $(\mathbf{m}, \mathbf{m}') $ 
 satisfying the first three properties. Then 
$$\Gamma_3 \ll 
\frac{\prod_{i=1}^{r} x_i x'_i}
{W_z^{2r} (\log  x_1)^r} \ \ \ \textrm{ and } \ \ \ 
\Gamma_4 \ll \frac{(\log z)^r}{z } 
\frac{\prod_{i=1}^{r} x_i x'_i}
{W_z^{2r} (\log  x_1)^r}
,$$
where the implied constants depend only on $r$.
\end{lemma}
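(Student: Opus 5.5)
We split the statement into its two bounds: $\Gamma_3$ via Serre's large sieve, and $\Gamma_4$ via Lemma \ref{lem_lopsided}. Throughout, $W_z$ is understood (as in \eqref{Porpora Torbido intorno al core}) to be a product of powers of the primes $p\le z$. Hence the congruence conditions modulo $W_z$ are independent of the conditions at primes $p>z$. We work in $\Z^{2r}$ with the lopsided box $\prod_i[1,x_i]\times\prod_i[1,x'_i]$. The hypothesis $W_z\max\{x_i,x'_i\}\le(\min\{x_i,x'_i\})^{1+1/(16r)}$ is precisely what makes the lopsided sieve effective after rescaling the progressions modulo $W_z$.

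\textbf{Bound for $\Gamma_3$.} Let $\Omega$ be the set of $(\b m,\b m')$ satisfying (3). We show that for each prime $p>z$ a positive proportion of residue classes is excluded. Take the classes modulo $p^2$ in which $p\,\|\, m_1$, $p\nmid m_j$ for $j\ne 1$, and $p\nmid m'_j$ for all $j$. For these,
$$\prod_{i}(s_im_i,s'_im'_i)_{\Q_p}=\Big(\frac{s'_1m'_1}{p}\Big),$$
and this equals $-1$ for half of them. The same holds with $m_1$ replaced by any of the $2r$ variables. So the excluded proportion satisfies $\omega(p)\ge 2r\cdot\frac{1}{2}\cdot\frac1p(1-\frac1p)(1+O(r/p)) = \frac{r}{p}+O(p^{-2})$.

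After substituting $m_i=a_i+W_zk_i$ (and similarly for $m'_i$), the $k$-variables range over boxes of side $x_i/W_z\ge W_z$. The map $\b m\mapsto\b k$ is a bijection modulo $p^2$ for $p\nmid W_z$, so the same local densities hold. Serre's large sieve with $m=2$ in its lopsided form (Wilson's Lemma 5.1, used as in the proof of Lemma \ref{lem_lopsided}) then gives
$$\Gamma_3\ll \frac{\prod_i x_ix'_i}{W_z^{2r}}\,L(t)^{-1},\qquad t=(\min_i x_i/W_z)^{1/8}.$$
The sum $L(t)$ runs over squarefree $q$ coprime to $W_z$ and satisfies
$$L(t)\gg \prod_{z<p\le t}\Big(1+\frac rp\Big)\gg \Big(\frac{\log t}{\log z}\Big)^{r}.$$
The hypotheses give $\log t\asymp\log x_1$, and the factor $(\log z)^r$ is absorbed. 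I will check this against what the hypotheses allow: they permit $z$ to grow, so I need a lower bound for $L$ uniform in $z$. The cleanest route is to define $\omega(p)$ to be $0$ for $p\le z$, since the congruences modulo $W_z$ are already fixed. Then $L(t)$ only sees primes greater than $z$, and
$$L(t)\gg(\log t)^r\prod_{p\le z}\Big(1-\frac1p\Big)^{r}\asymp\Big(\frac{\log t}{\log z}\Big)^{r}.$$
This produces a harmless extra factor $(\log z)^r$ in the bound for $\Gamma_3$. The statement's bound has no such factor, so either the implied constant is allowed to depend on $z$ or I mis-normalise. I expect the intended treatment is to include the density $\varphi(W_z)/W_z$ correctly: fixing residues coprime to $W_z$ accounts for the primes below $z$ through the factor $W_z^{-2r}$.

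\textbf{Bound for $\Gamma_4$.} We apply Lemma \ref{lem_lopsided} with $m=2$, with $\Omega$ as above, and with $Y=\bigcup_{i}\{m_i=0\}\cup\bigcup_i\{m'_i=0\}$ taken modulo $p^2$, i.e.\ the condition $p^2\mid m_i$ for some variable. If $p^2$ divides the product but no single variable, then $p$ divides two distinct variables. That is a codimension-$2$ condition modulo $p$ with $k=2$, which Bhargava's geometric sieve (or Lemma \ref{lem_lopsided} with $m=1$) bounds by $\prod x_ix'_i/(W_z^{2r}z\log z)$ times the large-sieve saving. The case $p^2\mid m_i$ is treated with $Y=\{m_i=0\}$ read modulo $p^2$. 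Its density is $p^{-2}$, which behaves like a codimension-$2$ condition, and the sum over $p>z$ of $p^{-2}$ is $\ll 1/z$. The proof of Lemma \ref{lem_lopsided} adapts directly: in the union-bound step we use $\#(Y\cap\Omega)(\Z/p^2)/\#\Omega(\Z/p^2)\ll p^{-2}$ together with $M_p(t)\ge L(\sqrt t)\,(\cdots)$.

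This yields
$$\Gamma_4\ll\frac{\prod x_ix'_i}{W_z^{2r}}\Big(\frac{1}{z\,L(t)}+\text{error}\Big),$$
with $L(t)^{-1}\ll(\log z/\log x_1)^r$, which is the stated bound with its factor $(\log z)^r/z$. The error terms from Lemma \ref{lem_lopsided} are controlled by \eqref{eq_papoutsia}, adjusted for $n=2r$ and the rescaling by $W_z$. The exponent condition $1+1/(16r)$ together with $x_i\ge W_z^2$ makes them smaller than the main term by a power of $\min x_i$.

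The main obstacle is obtaining the uniform-in-$z$ lower bound for $L$ with the correct power $(\log x_1/\log z)^r$, and verifying $\limsup\omega(p)\ne1$. The latter is clear because $\omega(p)\to 0$.
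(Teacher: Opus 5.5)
\textbf{$\Gamma_3$.} Your argument here is the paper's. You change variables modulo $W_z$ and take $\omega(p)=0$ for $p\le z$. For $p>z$ you get $\omega(p)\ge r/p-O_r(p^{-2})$ from the classes with $v_p(\prod_i m_im_i')=1$. You then apply Wilson's lopsided large sieve with $L(t)\gg_r(\log t/\log z)^r$. (This is not literally $\prod_{z<p\le t}(1+r/p)$, since $L$ is truncated at $q\le t$. It is still a standard lower bound, valid here because $\log t\gg \log W_z\gg z$, and the paper cites a reference for it.)

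The factor $(\log z)^r$ that worried you is genuine, not a normalisation slip. The paper's proof reaches the same lower bound for $L(t)$ and then records $\Gamma_3\ll\prod_iB_i/(\log B_1)^r$, simply dropping the factor. It cannot be removed: nothing is sieved at $p\le z$, while each $p\in(z,x_1]$ removes a proportion $\approx r/p$. Concretely, take $a_i=a_i'=1$ and $s_i=s_i'=1$. Then $\Gamma_3\ge \mathcal C(\mathbf x,\mathbf x';W_z,\mathbf a,\mathbf a')$ and all $\epsilon_i=1$. In the range of Proposition \ref{prop_main} this gives $\mathcal C\asymp_r(\log z)^r\prod_ix_ix_i'/(W_z^{2r}(\log x_1)^r)$, because $W_z\varphi(W_z)\asymp W_z^2/\log z$.

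So your closing guess points the wrong way: the factor $W_z/\varphi(W_z)\asymp\log z$ for each index $i$ is exactly where $(\log z)^r$ comes from. What you actually prove, $\Gamma_3\ll_r(\log z)^r\prod_ix_ix_i'/(W_z^{2r}(\log x_1)^r)$, is the correct statement. The discrepancy is harmless for the paper, because only $\Gamma_4$ is used afterwards (in Lemma \ref{lem:aurioprwistis7}), and its stated bound already carries $(\log z)^r$.

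\textbf{$\Gamma_4$.} Here your route is more careful than the paper's. The paper applies Lemma \ref{lem_lopsided} with $m=2$ and $Y=\{\prod_im_im_i'=0\}$, invoking $k\ge 2$ even though this $Y$ is a hypersurface. The large-prime step in the proof of Lemma \ref{lem_lopsided} (Bhargava's sieve modulo $p$) says nothing for a hypersurface. That application therefore needs exactly your split: ``$p$ divides two distinct variables'' is a genuine codimension-$2$ condition modulo $p$, and ``$p^2$ divides one variable'' has density $\ll p^{-2}$ modulo $p^2$. Two points need tightening.
\begin{enumerate}
\item ``Lemma \ref{lem_lopsided} with $m=1$'' would forfeit the large-sieve saving. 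Membership in $\Omega$ is not determined modulo $p$, so $\omega(p)=0$ at that level. What you need is the hybrid in which the bad event is read modulo $p$ (resp.\ $p^2$) while $S_p$ is built from $\Omega(\mathbb Z/p^2\mathbb Z)$; this is what your adapted union-bound step does.
\item That step uses $M_p(t)\ge L(\sqrt t)\cdots$, which is available only for $p$ up to a small power of $\min_iB_i$, where $B_i\asymp x_i/W_z$ are the rescaled side lengths. Beyond that, the case $p^2\mid m_i$ needs the trivial count $\ll(\prod_jB_jB_j')(p^{-2}+B_i^{-1})$ summed over $p\le\sqrt{x_i}$. This is acceptable because the hypothesis forces $\min_i\{x_i,x_i'\}\ge W_z^{16r}$, so $\sqrt{x_i}/B_i\asymp W_z/\sqrt{x_i}$ is a power saving.
\end{enumerate}
With these in place you get $\Gamma_4\ll_r(\log z)^{r-1}z^{-1}\prod_ix_ix_i'/(W_z^{2r}(\log x_1)^r)$, slightly better than stated.
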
\begin{proof} We start by making the 
  changes of variables 
$m_i=a_i+s_i W_z t_i$ and 
$m'_i=a'_i+s'_i W_z t'_i$  
so that the first two properties in $\Gamma_4$
imply   $|t_i| \leq 2M_i/W_z$ and 
$|t'_i| \leq 2M'_i/W_z.$ 
Let $n = 2r$ and identify $\mathbb{Z}^n$ with pairs 
$(\b t, \b t') \in  \Z^r \times \Z^r$. 
We shall use Lemma \ref{lem_lopsided} with $m=2$ and   
\[\Omega=\left\{ (\b t, \b t') 
\in \Z^r \times \Z^r : 
\prod_{i=1}^r (s_i a_i+W_z t_i, s'_i a'_i+W_z t'_i)_{\mathbb{Q}_p} = 1 \text{ for all primes } p>z \right\}.\] 
For any prime $p$, the local density $\omega(p)$ is defined by \[
\omega(p) = 1 - 
\frac{\# \Omega(\mathbb{Z}/p^2\mathbb{Z})}{p^{4r}}.\]
When $p\leq z$ then we clearly have $\omega(p)=0$. When $p>z$ we start 
estimating $\omega(p)$ by noting that $p\nmid W_z$
hence, an invertible change of variables in $\Z/p^2\Z$
shows that $$\# \Omega(\mathbb{Z}/p^2\mathbb{Z})= 
\#\left\{\b m,\b m'\in (\Z/p^2\Z)^{2r}:
\prod_{i=1}^r( m_i, m'_i )_{\Q_p}=1\right\}.$$
The contribution of the cases  with 
$p^2 \mid \prod_{i=1}^r m_i m'_i$ 
is at most 
$$2r p^{4r-2} + (2r)^2 (p^{2r-1})^2 \leq 5r^2p^{4r-2}$$
coming, respectively from cases where
$p^2$ divides one of the variables
or  when  $p$ divides at least two.  If $p\nmid \prod_i m_i m'_i$ and $p>2$
then every $(\b m,\b m') \in (\Z/p^2\Z)^{2r}$
falls into $\Omega(\Z/p^2\Z)$.
Hence,  \[  
-\frac{5r^2}{p^2}\leq \omega(p) -1 +\left(1-\frac{1}{p}\right)^{2r}+\lambda_p \leq 
 0
,\] where $\lambda_p=
 \#\{(\b m,\b m') \in \Omega(\mathbb{Z}/p^2\mathbb{Z}):
v_p(\prod_i m_i m'_i)=1\}p^{-4r}.$
For $p\neq 2$ 
and $v_p(\prod_i m_i m'_i)=1$
there exists a unique $i$ such that $p\mid m_im'_i$, in which case
\[ \prod_{i=1}^r (m_i,  m'_i)_{\mathbb{Q}_p}=\begin{cases} 
(\frac{m'_i}{p}), & \text{if } p \mid m_i,  \\
(\frac{m_i}{p}), & \text{if } p \mid m'_i.  \end{cases} \]
Since for $p>2$  there are $(p-1)/2$  squares in $\F_p^*$,
we obtain 
 \[\lambda_p= 
\left(1-\frac{1}{p}\right)^{2r-1}
\frac{r}{p} .\] Let us now note that for any real 
number $x\in [0,1]$ and $n\in \mathbb N$ we have 
$$ (1-x)^n \leq \mathrm e^{-nx} \leq 
1-nx +\frac{n^2 x^2}{2}
,$$ hence,  \begin{align*}
\omega(p) \geq  &1 - \frac{5r^2}{p^2}
-\left(1-\frac{2r}{p} +\frac{2r^2 }{p^2} \right) - 
\frac{r}{p}\left(1-\frac{2r-1}{p} 
+\frac{(2r-1)^2 x^2}{2} \right) 
\\ = &\frac{r}{p}-\frac{5r^2+r}{p^2}
- \frac{r(2r-1)^2 }{2p^3}  \geq 
\frac{r}{p} -\frac{8r^3}{p^2} \end{align*}
and 
$$\omega(p) \leq 
1 -\left(1-\frac{1}{p}\right)^{2r}-
\left(1-\frac{1}{p}\right)^{2r-1}
\frac{r}{p},$$  which shows that $ \omega(p) <1$ for all $p$ and that $\omega(p)=O_r(1/p)$, hence, 
$\limsup_{p\to\infty}\omega(p) =0$.
Thus,
$$L(t)= \sum_{q\leq t} \mu(q)^2 
\prod_{p\mid q }\frac{\omega(p)}{1-\omega(p)}
\geq 
\sum_{\substack{  q\leq t \\ 
p\mid q\Rightarrow p>z
}} \frac{\mu(q)^2 }{q} 
\prod_{p\mid q } r\big(1-\frac{8r^2}{p}\big)
\gg_r \Big(\frac{\log t}{\log z}\Big)^r$$
by \cite[Theorem 1.1]{MR2061214}. By the large 
sieve for lopsided boxes \cite[Lemma 5.1]{wilson}
we then obtain\[\Gamma_3 \ll 
\frac{\prod_{i=1}^{2r} B_i }
{(\log B_1)^r  }  ,\] 
where   $B_i:=2x_i/W_z$ for $i\leq r$
 and  $B'_i:=2x'_i/W_z$ for $i> r$. 

To bound $\Gamma_4$ we use Lemma \ref{lem_lopsided}
with  $Y$ given by the vanishing of 
$\prod_{i=1}^r m_i m'_i$. Then the lemma,
together with \eqref{eq_papoutsia},
show that  \[\Gamma_4 \ll \frac{(\log z)^r}{ z}
\frac{\prod_{i=1}^{2r} B_i }{(\log  B_1)^r } . \] 
Note that we used the  assumption 
$W_z \max_i \{x_i,x'_i\} \leq  
(\min \{x_i,x'_i\})^{1+\frac{1}{8n}}$
to verify the condition $\max_i B_i \leq (\min B_i)^
{1+\frac{1}{8(n-1)}}$
required for \eqref{eq_papoutsia}
and the fact that  $k\geq 2 $.\end{proof}
\begin{lemma} \label{lem_leo0}
Let $(X, \mathcal{F}, \mu)$ be a measure space, and let $E_2 \subset E_1 \subset X$ be measurable subsets.
Assume that $g, g_1: X \to [0,\infty)$ are integrable functions such that $g_1\rvert_{E_1} \leq g\rvert_{E_1}$.
Then\[\int_{E_1 \setminus E_2} g_1(x) \, \mathrm{d}\mu(x) \leq \int_{X \setminus E_2} g(x) \, \mathrm{d}\mu(x).
\]In particular, if $g_1\rvert_{E_2} = g\rvert_{E_2}$, then
\[0 \leq \int_{X} g(x) \, \mathrm{d}\mu(x) - \int_{E_1} g_1(x) \, \mathrm{d}\mu(x) \leq 
\int_{X \setminus E_2} g(x) \, \mathrm{d}\mu(x).
\]\end{lemma}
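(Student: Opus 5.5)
The first inequality follows from monotonicity and non-negativity of the integral. The second is then an algebraic rearrangement that uses the extra hypothesis on $E_2$.

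For the first claim, write $X\setminus E_2$ as the disjoint union of $E_1\setminus E_2$ and $X\setminus E_1$; this uses $E_2\subset E_1$. On $E_1\setminus E_2\subset E_1$ we have $g_1\leq g$ pointwise, so $\int_{E_1\setminus E_2} g_1\,\mathrm d\mu\leq \int_{E_1\setminus E_2} g\,\mathrm d\mu$. Since $g\geq 0$, this is at most $\int_{E_1\setminus E_2} g\,\mathrm d\mu+\int_{X\setminus E_1} g\,\mathrm d\mu=\int_{X\setminus E_2} g\,\mathrm d\mu$. Integrability of $g,g_1$ guarantees that all these integrals are finite, so the manipulations are legitimate.

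For the second claim, assume also that $g_1=g$ on $E_2$.

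\emph{Lower bound.} Split
\[
\int_{E_1} g_1\,\mathrm d\mu=\int_{E_2} g_1\,\mathrm d\mu+\int_{E_1\setminus E_2} g_1\,\mathrm d\mu .
\]
On $E_1$ we have $g_1\leq g$, so $\int_{E_1} g_1\,\mathrm d\mu\leq\int_{E_1} g\,\mathrm d\mu$. Since $g\geq 0$, this is at most $\int_X g\,\mathrm d\mu$, which gives the non-negativity.

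\emph{Upper bound.} Split $\int_X g\,\mathrm d\mu=\int_{E_2} g\,\mathrm d\mu+\int_{X\setminus E_2} g\,\mathrm d\mu$. Subtract the decomposition of $\int_{E_1} g_1\,\mathrm d\mu$ above. The $E_2$ contributions cancel because $g_1=g$ on $E_2$, leaving
\[
\int_X g\,\mathrm d\mu-\int_{E_1} g_1\,\mathrm d\mu=\int_{X\setminus E_2} g\,\mathrm d\mu-\int_{E_1\setminus E_2} g_1\,\mathrm d\mu .
\]
The subtracted term is non-negative because $g_1\geq 0$, so the difference is at most $\int_{X\setminus E_2} g\,\mathrm d\mu$.

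The first claim is not needed for this upper bound, which uses only non-negativity of $g_1$. The proof is elementary, and the only point requiring care is that finiteness of the integrals (from integrability) justifies the subtractions.
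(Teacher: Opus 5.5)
Your proof is correct and follows essentially the same route as the paper: monotonicity plus non-negativity of $g$ for the first claim, and for the second, cancelling the $E_2$ contributions via $g_1=g$ there and discarding $\int_{E_1\setminus E_2} g_1\,\mathrm d\mu\geq 0$. The paper phrases that last step as bounding $\int_{E_1\setminus E_2}(g-g_1)\,\mathrm d\mu$ by $\int_{E_1\setminus E_2} g\,\mathrm d\mu$, which is the same step.
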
\begin{proof}
Since $g_1\rvert_{E_1} \leq g\rvert_{E_1}$ and $E_1 \setminus E_2 \subset E_1$, we have
\[\int_{E_1 \setminus E_2} g_1(x) \, \mathrm{d}\mu(x) \leq \int_{E_1 \setminus E_2} g(x) \, \mathrm{d}\mu(x).
\]Because $g \geq 0$ and $E_1 \setminus E_2 \subset X \setminus E_2$, it follows that
\[\int_{E_1 \setminus E_2} g(x) \, \mathrm{d}\mu(x) \leq \int_{X \setminus E_2} g(x) \, \mathrm{d}\mu(x),
\]proving the first claim.

For the second claim, the assumptions
$g_1\rvert_{E_1} \leq g\rvert_{E_1}$ give
$\int_{E_1} g_1 \leq 
\int_{X} g$. Furthermore, using $g_1\rvert_{E_2} = 
g\rvert_{E_2}$ and $g_1 \ge 0$, we decompose the integrals over $X = E_2 \cup (E_1 \setminus E_2) \cup (X \setminus E_1)$:\[\int_{X} g - 
\int_{E_1} g_1 = \int_{E_1 \setminus E_2} (g - g_1) + \int_{X \setminus E_1} g \leq \int_{E_1 \setminus E_2} g + \int_{X\setminus E_1} g = 
\int_{X \setminus E_2} g.\qedhere\]
\end{proof}Now define 
\begin{equation}\label{del destin non vi lagnate} 
X=\left\{(\b m,\b m') \in 
\N ^r\times \N^r: 
\begin{array}{l} 
m_i \leq x_i, m_i\equiv a_i \md {W_z} \  \forall i
\\ m'_i \leq x'_i,  
m'_i\equiv a'_i \md {W_z} \  \forall i
\end{array}\right\}.\end{equation}Since $X$ is finite,
we can equip it with the structure 
of a probability space by assigning it the uniform 
discrete probability measure. Let us define the 
functions $g,g_1:X\to \{0,1\}$ by \begin{align}
g(\b m,\b m')&:=\prod_{p>z} \frac{1}{2}
\Big(1+\prod_{i=1}^r (s_i m_i,  s'_i m'_i)_{\mathbb{Q}_p}\Big),
 \label{girls_wanna_have_fun}
\\
g_1(\b m,\b m')&:=\prod_{i=1}^r\prod_{p>z} \frac{1}{2}
\Big(1+ (s_i m_i,  s'_i m'_i)_{\mathbb{Q}_p}\Big) 
.\end{align} Both functions are well-defined
as the products over $p>z$ are finite because 
the $p$-adic Hilbert symbols become trivial 
when $p\nmid \prod_{i} m_im'_i$ and $p>2$.
The function $g$ is the detector of the condition 
$$\prod_{i=1}^r (s_i m_i,s'_i m'_i)_{\mathbb{Q}_p}
=1 \ \ \ \forall p>z$$ and $g_1$ is the detector 
of the condition 
$$ (s_i m_i,  s'_i m'_i)_{\mathbb{Q}_p}=1 
\ \ \ \forall i\geq 1, \forall p>z.$$
As shown in Lemma \ref{lem_hilbert_symbol_bound}, 
the average of $g$ over $X$ is essentially equal 
to its average over   
$$  E:=\left\{(\b m,\b m') \in X:
p^2\nmid \prod_{i=1}^r m_im'_i \ \ 
\forall p>z\right\} .$$
Since $g_1$ is technically more tractable than $g$, 
it is desirable to have the freedom to 
replace averages of $g$ over $X$
by  averages of $g_1$ over $X$ or  subsets of $X$.
This transference is justified by the following result:
\begin{lemma}\label{lem:aurioprwistis7}
Let $r,z,m(z),W_z,\b x,\b x',\b a,
\b a'$ be as in Lemma \ref{lem_hilbert_symbol_bound}.
Then for any subset $E' \subset X$ we have 
 \begin{align*}
&\sum_{(\b m,\b m') \in  X} \ 
\prod_{p>z} \frac{1}{2}
\Big(1+\prod_{i=1}^r (s_i m_i,  s'_i m'_i)_{\mathbb{Q}_p}\Big)
\\=&\sum_{(\b m,\b m') \in  E\cup E'}\ 
\prod_{i=1}^r\prod_{p>z} \frac{1}{2}
\Big(1+ (s_i m_i,  s'_i m'_i)_{\mathbb{Q}_p}\Big)+
O\l(\frac{(\log z)^r}{z } 
\frac{\prod_{i=1}^{r} x_i x'_i}
{W_z^{2r} (\log x_1)^r}\r),\end{align*}
where the implied constant depends only on $r$.
\end{lemma}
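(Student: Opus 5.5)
We will apply Lemma \ref{lem_leo0} to the finite set $X$ of \eqref{del destin non vi lagnate}, equipped with counting measure, and with the functions $g,g_1$ of \eqref{girls_wanna_have_fun}. We take $E_2:=E$ and $E_1:=E\cup E'$, so that $E_2\subset E_1\subset X$. The sum on the left of the claimed identity is $\int_X g$, and the main term on the right is $\int_{E_1} g_1$. The second assertion of Lemma \ref{lem_leo0} then gives
\[0\leq \int_X g-\int_{E_1} g_1\leq \int_{X\setminus E} g,\]
provided we check its two hypotheses: $g_1\leq g$ on $E_1$ (indeed on all of $X$), and $g_1=g$ on $E_2=E$.

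\textbf{The inequality $g_1\leq g$ on $X$.} Both functions take values in $\{0,1\}$. If $g_1(\b m,\b m')=1$, then $(s_im_i,s'_im'_i)_{\Q_p}=1$ for every $i$ and every $p>z$. Taking the product over $i$ gives $\prod_i(s_im_i,s'_im'_i)_{\Q_p}=1$ for all $p>z$, that is, $g(\b m,\b m')=1$.

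\textbf{The equality $g_1=g$ on $E$.} Fix $(\b m,\b m')\in E$ and a prime $p>z$. Since $z\geq 8r^2\geq 8$, we have $p$ odd. If $p\nmid\prod_i m_im'_i$, every factor $(s_im_i,s'_im'_i)_{\Q_p}$ equals $1$, because both entries are $p$-adic units and $p$ is odd. Otherwise, membership in $E$ gives $v_p(\prod_i m_im'_i)=1$, so $p$ divides exactly one of the $2r$ numbers $m_i,m'_i$, and does so exactly once. Hence only one index $i_0$ can contribute a nontrivial symbol: every $(s_im_i,s'_im'_i)_{\Q_p}$ with $i\neq i_0$ has both entries units and equals $1$. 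Therefore $\prod_i(s_im_i,s'_im'_i)_{\Q_p}=1$ if and only if $(s_im_i,s'_im'_i)_{\Q_p}=1$ for every $i$. Since this holds for each $p>z$, the two detectors agree on $E$.

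\textbf{The error term.} It remains to bound $\int_{X\setminus E} g$. This is the number of $(\b m,\b m')\in X$ with $g=1$ (property (3) of Lemma \ref{lem_hilbert_symbol_bound}) for which some prime $p>z$ has $p^2\mid\prod_im_im'_i$ (property (4)). Properties (1) and (2) are exactly membership in $X$. So the quantity is precisely $\Gamma_4$ of Lemma \ref{lem_hilbert_symbol_bound}, and that lemma, whose hypotheses we are assuming, gives
\[\Gamma_4\ll\frac{(\log z)^r}{z}\frac{\prod_i x_ix'_i}{W_z^{2r}(\log x_1)^r}.\]
This yields the claimed error term with implied constant depending only on $r$.

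The argument is therefore essentially formal once the local computation on $E$ is in place. The only point needing care is the case analysis above, in particular ensuring $p$ is odd, so that the Hilbert symbol is trivial whenever both entries are $p$-adic units.
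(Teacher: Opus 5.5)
Your proposal is correct and follows the paper's proof: apply Lemma \ref{lem_leo0} with $E_2=E$ and $E_1=E\cup E'$, using $g_1\le g$ on $X$ and $g_1=g$ on $E$, and identify $\int_{X\setminus E} g$ with $\Gamma_4$ from Lemma \ref{lem_hilbert_symbol_bound}. Your check that $g_1=g$ on $E$ (at each odd $p>z$ at most one index $i$ can carry a nontrivial symbol) supplies what the paper calls ``easy to see''. You also use counting measure on $X$, which is what is actually needed for the integrals to coincide with the sums; the paper's mention of the uniform probability measure does not give this identification.
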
\begin{proof} If $g_1(\b m,\b m')=1$
then for all $i\geq 1 $ and all primes $p>z$ we have 
$ (s_i m_i, s'_i  m'_i)_{\mathbb{Q}_p}=1$, hence, 
$\prod_{i=1}^r (s_im_i, s'_i m'_i)_{\mathbb{Q}_p}=1$.
This shows that $g_1\leq g$ and it is easy to see
that these two conditions coincide when restricting 
$(\b m,\b m')$ on $E$. Thus, we are free to employ 
Lemma \ref{lem_leo0} with $E_2=E$ and $E_1=E\cup E'$,
which  gives 
\[\l|\int_{X} g - \int_{E\cup E'} g_1  \r|\leq 
\int_{X \setminus E} g.\] The two integrals on 
the left-hand side coincide with the 
corresponding sums in the present lemma, 
while $\int_{X \setminus E} g$ is   
the quantity $\Gamma$ in Lemma~\ref{lem_hilbert_symbol_bound}, 
which, once invoked, completes the argument.\end{proof}

\section{Character sums}\label{s_char_sums}
Assume that $\b a,\b a' \in (\Z/W_z\Z)^r$ where $W_z$ is as in \eqref{Porpora Torbido intorno al core}.
We shall estimate   asymptotically
$$ \c C(\b x, \b x'; W_z, 
\b a,  \b a'):=
\#\left\{\b m,\b m' \in \N^r: 
\begin{array}{l} 
m_i \leq x_i, m_i\equiv a_i \md {W_z} \  \forall i,
\\
m'_i \leq x'_i,  m'_i\equiv a'_i \md {W_z} \  \forall i,
\\ 
\prod_{i=1}^r (s_i m_i,s'_i m'_i )_{\Q_p}=1\ \ \forall 
\textrm{ prime } p 
\end{array}
\right\},
$$ where $ x_i,x'_i \geq 1$ and 
$s_i,s'_i \in \{-1,1\}$ satisfy $$
\prod_{i=1}^r (s_i,s'_i)_\R=1.$$
We make the assumption  
\begin{equation}\label{lem_bill_evans}
\max\{v_p(a_i),v_p(a'_i)\}\leq  m(z)-1-2
\mathds 1_{\{2\}}(p)
\ \ \ \forall p\leq z.\end{equation} 
This condition simplifies the Hilbert symbol 
condition at primes $p\leq z$ as the next lemma shows:
\begin{lemma}\label{lem_stabili}
If \eqref{lem_bill_evans} holds then for all 
$\b m,\b m'$  in $\c C(\b x, \b x'; W_z, 
\b a,  \b a')$, all primes $p\leq z$ and all 
$i\leq r $ we have $ (s_i m_i,s'_i m'_i )_{\Q_p}= 
(s_i a_i,s'_i a'_i )_{\Q_p}.$\end{lemma}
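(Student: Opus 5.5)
We have to show that $(s_i m_i,s'_i m'_i)_{\Q_p}$ depends only on the residues of $m_i,m'_i$ modulo $W_z$. We do not see the definition \eqref{Porpora Torbido intorno al core} of $W_z$. We assume it is $W_z=\prod_{p\le z}p^{m(z)}$, or at least that $p^{m(z)}\mid W_z$ for every prime $p\le z$. The plan is to fix a prime $p\le z$ and an index $i$, and to compare $m_i$ with $a_i$ and $m'_i$ with $a'_i$ $p$-adically.

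Since $m_i\equiv a_i \md{W_z}$, we get $m_i\equiv a_i \md{p^{m(z)}}$. Put $e=v_p(a_i)$. By \eqref{lem_bill_evans} we have $e\le m(z)-1-2\cdot\mathds 1_{\{2\}}(p)$. Then $m_i-a_i$ has valuation at least $m(z)>e$, so $v_p(m_i)=e$ and $m_i\neq 0$.

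Next we compare unit parts. Write $a_i=p^e u$ and $m_i=p^e u'$ with $u,u'\in\Z_p^\times$. Then $u'\equiv u \md{p^{m(z)-e}}$, and $m(z)-e\ge 1$ for odd $p$ while $m(z)-e\ge 3$ for $p=2$. So $u'/u\equiv 1 \md p$ when $p$ is odd, and $u'/u\equiv 1 \md 8$ when $p=2$. By Hensel's lemma $u'/u$ is a square in $\Z_p^\times$. Hence $m_i=a_i\cdot(\text{square})$ in $\Q_p^\times$, and therefore $s_im_i$ and $s_ia_i$ lie in the same square class. The same argument applies to $m'_i$ and $a'_i$.

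The Hilbert symbol $(\cdot,\cdot)_{\Q_p}$ is bimultiplicative and trivial on squares, so it factors through $\Q_p^\times/\Q_p^{\times2}$ in each argument. It follows that $(s_im_i,s'_im'_i)_{\Q_p}=(s_ia_i,s'_ia'_i)_{\Q_p}$. This step implicitly uses $a_i\neq 0$, which holds in $\Z$ viewed through its $p$-adic valuation, because the valuation bound forces $a_i\not\equiv 0 \md{p^{m(z)}}$.

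The only delicate step is $p=2$. There we need the congruence modulo $8$ for the unit part, and the extra $-2$ in \eqref{lem_bill_evans} for $p=2$ is exactly what provides it. Beyond that, the argument needs only that the exponent of $p$ in $W_z$ is at least $m(z)$, which we assume holds by the definition of $W_z$.
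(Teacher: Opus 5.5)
Your proof is correct and follows the same route as the paper: both show $v_p(m_i)=v_p(a_i)$ and that the unit parts agree modulo $p$, or modulo $8$ when $p=2$, which is exactly what the extra $-2$ in \eqref{lem_bill_evans} provides. The only difference is in the final step: the paper substitutes into the explicit Legendre-symbol formula for the Hilbert symbol at odd $p$ and dismisses $p=2$ as ``similar'', whereas you note that $m_i/a_i$ is a square in $\Q_p^\times$ by Hensel and use that the symbol factors through square classes, which covers $p=2$ explicitly and uniformly.
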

\begin{proof}
Assume $2<p\leq z$. 
Since $m_i\equiv a_i \md W$ and 
$v_p(a_i) < m=v_p(W)$, we deduce that $v_p(m_i)=v_p(a_i)$ and 
$m_i p^{-v_p(m_i)} \equiv a_i p^{-v_p(a_i)} \md p$.
Similarly we have $v_p(m'_i)=v_p(a'_i)$ and $m'_i p^{-v_p(m'_i)} 
\equiv a'_i p^{-v_p(a'_i)} \md p$. Hence,
\begin{align*}(s_i m_i,s'_i m'_i)_{\Q_p}&=
\Big(\frac{-1}{p}\Big)^{v_p(m_i) v_p( m'_i) } 
\Big(\frac{s_i m_i p^{-v_p(m_i)}}{p}\Big)^{v_p(m'_i)} 
\Big(\frac{s'_i m'_i p^{-v_p(m'_i)}}{p}\Big)^{v_p(m_i)}\\&=
\Big(\frac{-1}{p}\Big)^{v_p(a_i) v_p(a'_i) } 
\Big(\frac{s_ia_i p^{-v_p(a_i)}}{p}\Big)^{v_p(a'_i)} 
\Big(\frac{s'_i a'_i p^{-v_p(a'_i)}}{p}\Big)^{v_p(a_i)}=
(s_i a_i,s'_ia'_i)_{\Q_p}.
\end{align*} The proof works in a similar way for $p=2$. \end{proof} Next, we use the geometric-large sieve to
replace the counting function $\mathcal{C}$ 
with a product of simpler counting functions.
\begin{lemma}\label{talor_guerriero_invitto}
Keep the setting of 
Lemma \ref{lem_hilbert_symbol_bound},
assume \eqref{lem_bill_evans} and that 
$$\prod_{i=1}^r (s_i a_i,s'_i a'_i )_{\Q_p}=1 
\ \ \ \forall p\leq z.$$Then $$\c C(\b x, \b x'; W_z, 
\b a,  \b a')=\prod_{i=1}^r
\c N_{s_i,s'_i}(x_i,x'_i;W_z,a_i,a'_i)+
O\l(\frac{(\log z)^r}{z } 
\frac{\prod_{i=1}^{r} x_i x'_i}
{W_z^{2r} (\log x_1)^r}\r),$$ where 
$$\c N_{s,s'}(x,x';W_z,a,a'):=
\sum_{\substack{ 1\leq m\leq x,
1\leq m'\leq x'\\ (m,m')\equiv (a,a')\md {W_z} }}
\ \ \ \prod_{ \substack{ p\mid  m \\ p>z }}
\frac{1}{2} \left(1+\Big(\frac{ s' m' }{p}\Big)\right)
  \prod_{ \substack{ p\mid  m' \\ p>z }}
\frac{ 1}{2} \left(1+\Big(\frac{ s m }{p}\Big)\right)
$$ and the implied constant depends at most on $r$. 
\end{lemma}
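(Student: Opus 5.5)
The plan is to split the Hilbert symbol condition into the primes $p\leq z$ and the primes $p>z$. Then Lemma~\ref{lem:aurioprwistis7} replaces the joint condition at large primes by the product of individual conditions, which factorises over $i$.

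\textbf{Small primes.} First, I handle $p\leq z$ (and the real place). By Lemma~\ref{lem_stabili}, assumption \eqref{lem_bill_evans} gives $(s_im_i,s'_im'_i)_{\Q_p}=(s_ia_i,s'_ia'_i)_{\Q_p}$ for every $p\leq z$. The hypothesis $\prod_i(s_ia_i,s'_ia'_i)_{\Q_p}=1$ for $p\leq z$ then shows that the condition at these primes holds automatically for every $(\b m,\b m')\in X$, with $X$ as in \eqref{del destin non vi lagnate}. Since $\prod_i (s_i,s'_i)_\R=1$, the condition at infinity holds as well. I would also note that, since the $\b s,\b s'$ enter through the signs, $\c C$ should count $(\b m,\b m')\in X$ with the Hilbert product condition imposed on $(s_im_i,s'_im'_i)$. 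Hence
$$\c C(\b x,\b x';W_z,\b a,\b a')=\sum_{(\b m,\b m')\in X} g(\b m,\b m'),$$
with $g$ as in \eqref{girls_wanna_have_fun}.

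\textbf{Transference.} Next, I apply Lemma~\ref{lem:aurioprwistis7} with $E'=X$. This yields $\sum_X g=\sum_X g_1+O(\cdots)$, with exactly the error term in the statement.

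\textbf{Factorisation.} It remains to show that $\sum_{X} g_1=\prod_i \c N_{s_i,s'_i}(x_i,x'_i;W_z,a_i,a'_i)$. Since $X$ is a product of the sets $\{(m_i,m'_i): m_i\leq x_i,\, m'_i\leq x'_i,\, (m_i,m'_i)\equiv(a_i,a'_i)\md{W_z}\}$ and $g_1$ is a product over $i$ of functions of $(m_i,m'_i)$ alone, the sum factorises. So it suffices to verify, for each fixed $i$, the identity
$$\prod_{p>z}\tfrac12\big(1+(sm,s'm')_{\Q_p}\big)=\prod_{p\mid m,\,p>z}\tfrac12\Big(1+\Big(\tfrac{s'm'}{p}\Big)\Big)\prod_{p\mid m',\,p>z}\tfrac12\Big(1+\Big(\tfrac{sm}{p}\Big)\Big).$$

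\textbf{The main obstacle.} This last identity is where the real care is needed, because it is not a literal identity when $p$ divides both $m$ and $m'$, or when $p^2$ divides one of them. In those cases the Hilbert symbol involves higher valuations, and a prime dividing both would appear in both products on the right. For $p>z\geq 2$ with $v_p(m)=1$ and $p\nmid m'$, the formula $(sm,s'm')_{\Q_p}=\big(\frac{s'm'}{p}\big)$ gives agreement, and the case with the roles of $m,m'$ swapped is symmetric. Therefore the two functions agree on $E$, the set where $p^2\nmid\prod_i m_im'_i$ for all $p>z$. Off $E$, both sides are $\{0,1\}$-valued, so the discrepancy is bounded by $\#(X\setminus E)$. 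I would bound $\#(X\setminus E)$ by $\Gamma_4$-type counts; more simply, I would redo the argument via Lemma~\ref{lem_leo0} with $g_1$ replaced by the right-hand product, which is also $\leq g$ on $X$. That inequality holds because whenever the product equals $1$ at a prime, the relevant symbols are trivial up to shared-prime cases, which can be absorbed into $X\setminus E$. Lemma~\ref{lem_hilbert_symbol_bound} then controls $\int_{X\setminus E}g=\Gamma_4$. Making sure the $\Gamma_4$ bound covers all of $X\setminus E$ weighted by $g$, including primes dividing both $m_i$ and $m'_i$ (which gives $p^2\mid m_im'_i$, so these points lie in $X\setminus E$), is the step I would check most carefully. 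Once it holds, the stated error $O\big(\frac{(\log z)^r}{z}\frac{\prod x_ix'_i}{W_z^{2r}(\log x_1)^r}\big)$ follows.
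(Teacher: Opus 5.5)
Your reduction to $\sum_{X} g$ and the transference step are fine. Lemma~\ref{lem:aurioprwistis7} with $E'=X$ is legitimate and gives $\sum_X g=\sum_X g_1+O(\Gamma_4)$. The gap is in the last step, where you identify $\sum_X g_1$ with $\prod_i \mathcal{N}_{s_i,s_i'}$.

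Write $W=\prod_i w_i$ for the weight defining $\prod_i\mathcal N_{s_i,s'_i}$. Your claim that $W\le g$ on $X$ is false. If some $p>z$ divides both $m_i$ and $m_i'$, then both Legendre symbols $\left(\frac{s_i'm_i'}{p}\right)$ and $\left(\frac{s_im_i}{p}\right)$ vanish. So the $p$-part of $w_i$ is $\frac12\cdot\frac12=\frac14$, and in particular $W$ is not $\{0,1\}$-valued. Now take $m_i=pu$, $m_i'=pv$ with $p\nmid uv$. Then $(s_im_i,s_i'm_i')_{\mathbb{Q}_p}=\left(\frac{-1}{p}\right)\left(\frac{s_iu}{p}\right)\left(\frac{s_i'v}{p}\right)$, which can equal $-1$; this forces $g=0$ while $W>0$.

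Consequently, Lemma~\ref{lem_leo0} cannot be run with $E_1=X$ and $W$ in place of $g_1$. The $\Gamma_4$ bound controls only the $g$-weighted quantity $\sum_{X\setminus E} g$, so it says nothing about $\sum_{X\setminus E}W$. The fallback via $\#(X\setminus E)$ is too weak: $\#(X\setminus E)\asymp \prod_i x_ix_i'/(W_z^{2r} z\log z)$. This exceeds the target error by a factor of order $(\log x_1)^{r}/(\log z)^{r+1}$, which is unbounded in the range where the lemma is used.

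The paper avoids all this by applying Lemma~\ref{lem:aurioprwistis7} with
$E'=\{(\mathbf m,\mathbf m')\in X:\ p^2\nmid m_im_i' \text{ for all } p>z \text{ and all } i\}$ instead of $E'=X$.
This set contains $E$ and is a product over $i$. On it, $g_1$ coincides identically with the $\mathcal N$-weight, because for $p>z$ at most one of $m_i,m_i'$ is divisible by $p$, and only to the first power. So no comparison between $W$ and $g$ is ever needed. Equivalently, starting from your $\sum_X g_1$, you may discard $X\setminus E'$ at cost $\sum_{X\setminus E'}g_1\le \sum_{X\setminus E} g=\Gamma_4$, since $E\subset E'$ and $g_1\le g$.

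The price is that the resulting counting function carries the extra restriction $p^2\nmid mm'$ for all $p>z$. This is what the paper's proof actually delivers, and it is how $\mathcal N$ is used in the next lemma, where the restriction becomes $\mu(ff')^2$; the displayed definition of $\mathcal N$ omits it. If you insist on the unrestricted $\mathcal N$, you need a further sieve upper bound for $\sum_{X\setminus E'}W$. For instance, you could extract the offending prime $p$, which costs $O(1/p^2)$ in density, and apply a large sieve to the remaining weight. That bound is not supplied by Lemma~\ref{lem_hilbert_symbol_bound}.
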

\begin{proof}By \eqref{lem_bill_evans} and our assumption 
$\prod_{i=1}^r (s_i a_i,s'_i a'_i )_{\Q_p}=1 $ 
for $p\leq z$  we only need to check the Hilbert symbol 
condition in $\c C$ for primes $p>z$. In other words,
the assumptions guarantee that 
$$\c C(\b x, \b x'; W_z, 
\b a,  \b a')= 
\sum_{(\b m,\b m') \in  X} g(\b m,\b m'),$$
where $X$ and $g$ are defined respectively 
in \eqref{del destin non vi lagnate} and  
\eqref{girls_wanna_have_fun}. By 
Lemma \ref{lem:aurioprwistis7}
with $$ E'=\left\{(\b m,\b m') \in X:
p^2\nmid m_im'_i \ \ 
\forall p>z, \ \forall i\right\} $$
we deduce that 
$$\sum_{(\b m,\b m') \in  X} g(\b m,\b m')
=\sum_{(\b m,\b m') \in E'}\ 
\prod_{i=1}^r\prod_{p>z} \frac{1}{2}
\Big(1+ (s_i m_i, s'_i m'_i)_{\mathbb{Q}_p}\Big)+
O\l(\frac{(\log z)^r}{z } 
\frac{\prod_{i=1}^{r} x_i x'_i}
{W_z^{2r} (\log x_1)^r}\r).$$
Since $p>z$ ensures that $p\neq 2 $
and $p^2\nmid m_i m'_i$ we can write 
$$  (s_i m_i,s'_i n'_i )_{\Q_p}= 
\begin{cases} \ \ \  1 & \text{if } p\nmid 
m_i m'_i, \\ \Big(\frac{ s'_i m'_i }{p}\Big) & 
\text{if } p\mid m_i,\\
\Big(\frac{ s_i m_j }{p}\Big) & 
\text{if } p\mid m'_i.\end{cases} $$ 
Hence, up to an acceptable error term, 
 $ \c C(\b x, \b x'; W_z, \b a,  
 \b a')$ equals
 $$  \sum_{ \b m,\b m' \in \N^r }
 \prod_{i=1}^r \prod_{ \substack{ p\mid  m_i \\ p>z }}
\frac{ \left(1+\Big(\frac{ s'_i m'_i }{p}\Big)\right)}{2}
  \prod_{ \substack{ p\mid  m'_i \\ p>z }}
\frac{  \left(1+\Big(\frac{ s_i m_i }{p}\Big)\right)}{2},$$
where the sum is subject to 
the following conditions for all $i=1,\ldots, r$:
$$1\leq  m_i \leq x_i, 1\leq
m'_i \leq x'_i, \ \ \  (m_i,m'_i)
 \equiv (a_i,a'_i) \md {W_z}, \ \ \ 
 p^2\nmid  m_i m'_i \  \ \forall p>z.$$
The final sum can then be factored as
a product of independent sums, as stated in the lemma.
\end{proof} 
As a preparation for the character sum method
we first factor $m,m'$ in $\c N$
into a friable and a non-friable part.
\begin{lemma} \label{le_dove e leon?oe} 
Let $W_z$ be as in Lemma \ref{lem_hilbert_symbol_bound} and assume that $a,a'\in \Z/W_z\Z$ 
are such that  for all primes 
$p\leq z$ the property 
$\max\{v_p(a),v_p(a')\}\leq  m(z)-1-2
\mathds 1_{\{2\}}(p)$ holds. Define 
$$ w : = \prod_{p\leq z } p^{v_p(a)} 
\ \ \ \textrm{ and } 
\ \ \  w' : = \prod_{p\leq z } p^{v_p(a')}.$$
Then $\c N_{s,s'}(x,x';W_z,a,a')$ equals 
$$
\sum_{\substack{ 1\leq f \leq x/w,
1\leq f' \leq x'/w', \ 
\\ f\equiv a/w\md {W_z/w}
\\ f'\equiv a'/w'\md {W_z/w'}}} \mu(ff')^2
\ \ \ \prod_{p\mid f} 
\frac{1}{2} \left(1+\Big(\frac{s'w'f'}{p}\Big)\right)
\prod_{  p\mid  f' }
\frac{ 1}{2} \left(1+\Big(\frac{swf}{p}\Big)\right)
.$$\end{lemma}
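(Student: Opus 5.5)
The plan is to make the substitution $m=wf$, $m'=w'f'$ and check that it is a bijection between the two index sets which carries the summand of $\c N_{s,s'}$ onto the stated summand. Throughout I read $\c N_{s,s'}$ as in the last display of the proof of Lemma \ref{talor_guerriero_invitto}, that is, with the extra condition $p^2\nmid mm'$ for all $p>z$ inherited from $E'$. This is what produces the factor $\mu(ff')^2$; without it the identity fails.

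\textbf{Step 1 (the $z$-friable part is fixed).} Let $p\le z$ and write $m(z)=v_p(W_z)$ as in the definition \eqref{Porpora Torbido intorno al core}. If $m\equiv a \md{W_z}$, then $m\equiv a \md{p^{m(z)}}$, and $v_p(a)\le m(z)-1<m(z)$ by hypothesis. Hence $v_p(m)=v_p(a)$, exactly as in the proof of Lemma \ref{lem_stabili}, and the same holds for $m'$ and $a'$. Consequently $w\mid m$, and $f:=m/w$ is coprime to every prime $p\le z$; similarly $f':=m'/w'$. The bound $1\le m\le x$ becomes $1\le f\le x/w$.

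\textbf{Step 2 (the congruence).} Since $v_p(w)=v_p(a)<v_p(W_z)$ for each $p\le z$, we have $w\mid W_z$ and $w\mid a$ (interpreting $a$ via any lift, since $w\mid W_z$). Therefore
$$wf\equiv a \md{W_z}\iff f\equiv a/w \md{W_z/w}.$$
Conversely, any $f$ with $f\equiv a/w \md{W_z/w}$ satisfies $p\nmid f$ for $p\le z$, because $v_p(a/w)=0$ and $p\mid W_z/w$. So $wf\equiv a \md{W_z}$ and $v_p(wf)=v_p(a)$. This makes $m\mapsto f$ a bijection between the two index sets, and likewise $m'\mapsto f'$.

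\textbf{Step 3 (the weights).} The primes $p>z$ dividing $m$ are exactly those dividing $f$, and the same holds for $m'$ and $f'$. For such $p$ we have $(s'm'/p)=(s'w'f'/p)$ and $(sm/p)=(swf/p)$ trivially. It remains to see that the condition $p^2\nmid mm'$ for all $p>z$ is equivalent to $\mu(ff')^2=1$. Since $ff'$ has no prime factors $\le z$, $\mu(ff')^2=1$ says precisely that no $p>z$ satisfies $p^2\mid ff'=mm'/(ww')$, and $p\nmid ww'$ for $p>z$. Substituting gives the claimed expression.

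The only real obstacle is Step 3, namely pinning down that the squarefree condition is genuinely part of $\c N_{s,s'}$. Everything else is bookkeeping with $p$-adic valuations, using hypothesis \eqref{lem_bill_evans} in the form $v_p(a),v_p(a')<m(z)$.
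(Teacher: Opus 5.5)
Your proof is correct and follows essentially the same route as the paper's. The ingredients match: the substitution $m=wf$, $m'=w'f'$; the valuation identity $v_p(m)=v_p(a)$ for $p\le z$; the equivalence of the congruences modulo $W_z$ and modulo $W_z/w$, which also forces $f$ to be coprime to every $p\le z$; and the translation of $p^2\nmid mm'$ for $p>z$ into $\mu(ff')^2=1$.

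You are also right that this last step requires reading $\c N_{s,s'}$ with the square-free condition from the final display in the proof of Lemma \ref{talor_guerriero_invitto}. The paper's own proof silently uses $\c N_{s,s'}$ in exactly this way.
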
\begin{proof} 
Since $(m,m')\equiv (a,a')\md {W_z}$ we infer that 
$v_p(m)=v_p(w)$ and 
$v_p(m')=v_p(w')$ for all primes $p\leq z$.
Therefore, $ w \mid m$ and $ w' \mid m'$,  
 hence, both  $f= m/w,  f'=m'/w'$  are integers.
 Note that the condition $p^2\nmid  m m'$ for all primes 
 $p>z$ is equivalent to $f f'$ being square-free.
Furthermore, the congruence 
$m \equiv a \md {W_z}$ is the same as 
$f \equiv a/w \md{W_z/w}$. 
Since $W_z/w$ is divisible by every prime $p \le z$ 
(due to our assumed upper bound on $v_p(a)$), 
it is automatic from 
$f \equiv a/w \md{W_z/w}$ 
that $f$ is coprime to all primes 
$p \le z$.\end{proof}

\begin{lemma} \label{le_omega}
Keep the setting of Lemma \ref{le_dove e leon?oe}
and assume that  $\max\{x,x'\} \leq \min\{x,x'\}^2$ and that 
  $W_z\leq \log (3x)$.
  Then  the sum over $f,f'$ in Lemma \ref{le_dove e leon?oe} 
  equals 
 $$(1+\epsilon)
 \c S+ O\Big(\frac{xx'}{(\log (3x))^{1926} } \Big),$$  where 
 $\epsilon:=(s,s')_{\R}\prod_{p\leq z}
(sa,s'a')_{\Q_p}$, $$\c S:=  \sum_{\substack{ \ell \leq x/w,
 n \leq x'/w', \ 
\\ \ell\equiv a/w\md {W_z/w}
\\ n\equiv a'/w'\md {W_z/w'}}}  
\frac{\mu(\ell n)^2}{\tau(\ell)\tau(n)} 
$$ and  the implied constant is absolute.
 \end{lemma}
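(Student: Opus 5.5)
The plan is to open up the local factors into Jacobi symbol sums and follow the character sum method of Heath-Brown \cite{rhbII} and Fouvry--Kl\"uners \cite{fouklu}.

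\emph{Expansion.} Since $ff'$ is square-free and coprime to every prime $p\le z$, we have
$$\prod_{p\mid f}\tfrac12\Big(1+\Big(\tfrac{s'w'f'}{p}\Big)\Big)=\frac{1}{\tau(f)}\sum_{d\mid f}\Big(\frac{s'w'f'}{d}\Big),$$
and similarly for $f'$ with a divisor $e\mid f'$. Writing $f=d\ell_1$ and $f'=e n_1$, the sum in Lemma \ref{le_dove e leon?oe} becomes a sum over four pairwise coprime square-free variables $d,\ell_1,e,n_1$. The summand is weighted by $\tau(d\ell_1)^{-1}\tau(en_1)^{-1}$ and carries the character
$$\Big(\frac{s'w'en_1}{d}\Big)\Big(\frac{swd\ell_1}{e}\Big).$$

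\emph{Main terms.} Two configurations give main terms.

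1. The case $d=e=1$ gives exactly $\mathcal S$.

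2. The case $\ell_1=n_1=1$ (that is, $d=f$ and $e=f'$) gives
$$\Big(\frac{f'}{f}\Big)\Big(\frac{f}{f'}\Big)\Big(\frac{s'w'}{f}\Big)\Big(\frac{sw}{f'}\Big).$$
I will show this is constant and equal to $\epsilon$ on the whole range of summation. The cleanest route avoids computing Jacobi symbols directly. For $m=wf$ and $m'=w'f'$ this quantity equals $\prod_{p\mid ff'}(sm,s'm')_{\Q_p}=\prod_{p>z}(sm,s'm')_{\Q_p}$. By Hilbert reciprocity this is $(s,s')_\R\prod_{p\le z}(sm,s'm')_{\Q_p}$, and Lemma \ref{lem_stabili} (applied with $r=1$, using the valuation hypothesis) identifies it with $\epsilon$. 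So this configuration contributes $\epsilon\mathcal S$, after relabelling $(\ell,n)=(f,f')$.

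\emph{Error terms.} All remaining configurations, where $d$ or $e$ is a nontrivial proper divisor in the relevant sense, must be shown to be $O(xx'(\log 3x)^{-1926})$. The congruence conditions modulo $W_z/w$ and $W_z/w'$ are handled by expanding into Dirichlet characters modulo $W_z\le\log(3x)$; this costs only a factor of size $W_z^{O(1)}$, which is a power of $\log x$. The square-free and coprimality conditions are removed by M\"obius inversion in the usual way. I then split according to the sizes of the variables. Let $A$ be a large constant.

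\begin{enumerate}
\item If some variable carrying a genuinely non-principal character has size exceeding $(\log x)^{A}$ while the variables it is twisted against are at most $(\log x)^{A}$, the character in that variable has conductor $\ll(\log x)^{O(A)}$. The Siegel--Walfisz theorem for sums of $\mu^2/\tau$ twisted by characters of such conductor then gives a saving of an arbitrary power of $\log x$. Here the ineffective constant is harmless.
\item If two variables linked by the quadratic symbol $\big(\tfrac{\cdot}{\cdot}\big)$ are both larger than $(\log x)^{A}$, for instance $d$ and $e$, or $d$ and $n_1$, I apply Heath-Brown's large sieve for real characters \cite{rhbII} to the resulting bilinear form. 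This saves a power of the smaller variable, hence at least $(\log x)^{cA}$.
\end{enumerate}
The hypothesis $\max\{x,x'\}\le\min\{x,x'\}^2$ guarantees that $\log x\asymp\log x'$. This keeps all ranges comparable and ensures that the Siegel--Walfisz saving in $x'$-variables is also a saving in powers of $\log(3x)$.

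\emph{Main obstacle.} I expect the main difficulty to be the bookkeeping in the case split. Every configuration other than the two main ones must fall into one of the two regimes. The delicate mixed cases are those where, for example, $d$ and $\ell_1$ are both large but $e,n_1$ are small; there the oscillation must come from Siegel--Walfisz in $\ell_1$ or $d$ with the small variables absorbed into the modulus. These need to be checked so that no non-oscillating term is wrongly discarded. The identification of the second main term with $\epsilon$ via reciprocity is, by contrast, clean.
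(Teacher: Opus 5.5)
\textbf{Verdict: genuine gap.} Your expansion, your two main terms, and your identification of the second one with $\epsilon$ via Hilbert reciprocity and Lemma \ref{lem_stabili} coincide with the paper's proof. Your two analytic tools are also the paper's: the large sieve for quadratic characters, and Siegel--Walfisz for $\mu^2/\tau$ twisted by a character of small conductor, i.e.\ \cite[Lemma 2 and Corollary 2]{MR2675875}. The gap is in the case analysis, which you leave open and which would fail as stated.

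\textbf{The $(d,e)$ step.} Write your character as $\bigl(\frac{s'w'}{d}\bigr)\bigl(\frac{sw}{e}\bigr)\cdot\bigl(\frac{e}{d}\bigr)\bigl(\frac{d}{e}\bigr)\cdot\bigl(\frac{n_1}{d}\bigr)\bigl(\frac{\ell_1}{e}\bigr)$. The only oscillating bilinear kernels are $\bigl(\frac{n_1}{d}\bigr)$ and $\bigl(\frac{\ell_1}{e}\bigr)$. The coupling of $d$ with $e$ is $\bigl(\frac{e}{d}\bigr)\bigl(\frac{d}{e}\bigr)=(-1)^{\frac{d-1}{2}\cdot\frac{e-1}{2}}$, which depends only on $d,e \bmod 4$.
\begin{itemize}
\item A bilinear form with this kernel has no cancellation, so your step 2 (``for instance $d$ and $e$'') breaks down.
\item Step 1 does not apply either, because $d$ and $e$ are twisted against each other.
\item This is not a marginal range: $d,e>\Psi\ge \ell_1,n_1$ is exactly the neighbourhood of your second main term.
\end{itemize}
The paper fixes $\ell_1,n_1$ and notes that the bracket $\bigl(\frac{s'w'e}{d}\bigr)\bigl(\frac{swd}{e}\bigr)$ is then independent of $(d,e)$. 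By your own reciprocity argument it equals $(s,s')_\R\prod_{p\le z}(swd,s'w'e)_{\Q_p}$, and $(wd,w'e)$ runs through the single class $(a\ell_1^{-1},a'n_1^{-1}) \bmod W_z$. After pulling it out, Siegel--Walfisz applies to the sum over $e$ with the character $\bigl(\frac{\ell_1}{e}\bigr)$ when $\ell_1\ge2$, or to the sum over $d$ with $\bigl(\frac{n_1}{d}\bigr)$ when $n_1\ge2$.

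\textbf{The other mixed range.} Your proposed treatment of $d,\ell_1$ large with $e,n_1$ small also fails in general. For $e=n_1=1$ and $s'w'=1$ the summand is just $\mu^2(f)/\tau(f)\ge 0$, with nothing to oscillate, although this is neither main-term configuration. The paper disposes of every term with $f'=en_1\le\Psi^2$ (respectively $f=d\ell_1\le\Psi^2$) by the trivial bound $x\Psi^2$ (respectively $x'\Psi^2$). This is $\ll xx'\Psi^{-1/6}$ because $\max\{x,x'\}\le\min\{x,x'\}^2$ forces $x'\ge x^{1/2}$ and $x\ge x'^{1/2}$. That, rather than merely $\log x\asymp\log x'$, is the essential use of the hypothesis.

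\textbf{The bookkeeping needed.} Take $\Psi=(\log 3x)^A$. Remove the trivial ranges above, and the ranges where the large sieve applies to $(d,n_1)$ or to $(\ell_1,e)$. Exactly two configurations survive:
\begin{itemize}
\item $d,e\le\Psi<\ell_1,n_1$, handled by Siegel--Walfisz around the first main term;
\item $\ell_1,n_1\le\Psi<d,e$, handled by pulling out the reciprocity factor and then applying Siegel--Walfisz around the second main term.
\end{itemize}
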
 \begin{proof} Opening up the products in 
 the sum over $f,f'$ in Lemma \ref{le_dove e leon?oe}
 we can write  $$
 \prod_{p\mid f} 
 \left(1+\Big(\frac{s'w'f'}{p}\Big)\right)=
 \sum_{\substack{k,\ell \in \N  \\ k\ell=f} }\Big(\frac{s'w'f'}{k}\Big), \ \ \ 
\prod_{  p\mid  f' }
 \left(1+\Big(\frac{swf}{p}\Big)\right)
= \sum_{\substack{m,n \in \N  \\ mn=f'}} \Big(\frac{swf}{m}\Big). $$ The terms with $k=m=1$ come from choosing $1$
in each bracket in both products over $p\mid f $ and 
$p\mid f'$. These terms contribute a quantity that equals $\c S$. 
Similarly, the terms 
with $\ell=n=1$ come from choosing the quadratic 
symbol $(\frac{\cdot}{p})$ in
each product over $p\mid f$ and $p\mid f'$. 
These terms therefore contribute  
$$  \sum_{\substack{ k \leq x/w, \  m \leq x'/w', \ 
\\ k\equiv a/w\md {W_z/w}
\\ m\equiv a'/w'\md {W_z/w'}}} 
\frac{\mu(km)^2}{\tau(k)\tau(m)}
\Big(\frac{s'w'm}{k}\Big)
 \Big(\frac{swk}{m}\Big)
.$$ The product of the two quadratic symbols is independent of $k,m$. Indeed, 
$$ \Big(\frac{s'w'm}{k}\Big)
 \Big(\frac{swk}{m}\Big)= \prod_{p\mid k}
 \Big(\frac{s'w'm}{p}\Big)\prod_{p\mid m}
  \Big(\frac{swk}{p}\Big).$$ By an argument 
  that is similar to the one in Lemma \ref{lem_stabili}
  this can be seen to be $$\prod_{p\mid km }
  (swk,s'w'm)_{\Q_p}=\prod_{p>z}
  (swk, s'w'm)_{\Q_p}=(s,s')_{\R}\prod_{p\leq z}
  (swk,s'w'm)_{\Q_p},$$ where the last equality 
  comes from Hilbert's product formula.
  Since $(wk, w'm)\equiv (a,a') \md{W_z}$ we infer that 
  \begin{equation}\label{zafferano} \Big(\frac{s'w'm}{k}\Big)
 \Big(\frac{swk}{m}\Big)=(s,s')_{\R}\prod_{p\leq z}
  (sa,s'a')_{\Q_p}=\epsilon.\end{equation}
  Hence, the contribution of the terms 
with $\ell=n=1$ is $\epsilon\c S$.

We will show that the remaining terms go into the error term. Let $\Psi=(\log (3x) )^A$ where $A>0$ is a parameter 
 that will be chosen later. The terms with $k,\ell \leq \Psi$ 
 contribute a quantity whose modulus is at most $\#\{
f\leq \Psi^2, f'\leq x'\} \leq \Psi^2 x'  \ll
xx'\Psi^{-1/6}$ due to the assumption
$\max\{x,x'\} \leq \min\{x,x'\}^2$. Similarly, the terms with 
$m,n \leq \Psi$ contribute at most $ \ll xx'\Psi^{-1/6} $. 

Next, we see that the terms with $k,n\leq \Psi$ 
contribute $$ \ll \sum_{k,n  \leq \Psi  } 
 \left| \sum_{\substack{ 
 \ell,m \in \N, \gcd( \ell m, kn )=1\\  
 \ell  \leq x/(kw), 
\ell\equiv  a/(kw) \md {W_z/w}\\ 
m \leq x'/(nw'),  
m   \equiv a'/(nw') \md {W_z/w'} } } 
\Big( \frac{ \mu( \ell  )^2 }{\tau(  \ell  )}  \Big)\cdot  
\Big( \frac{  \mu(  m  )^2}{\tau( m )}    \Big( \frac{  m }{k }   \Big) \Big(\frac{sw k   }{m} \Big)\Big) \cdot 
  \Big(\frac{ \ell  }{m} \Big)\right|.$$
  Noting that the congruence conditions on $\ell$ and $m$ 
  force them to be odd, we can use  the large  sieve for quadratic characters \cite[Lemma 2]{MR2675875} to bound this by 
$$\ll \sum_{k,n  \leq \Psi  } 
\Big( 
\frac{x}{k}
\Big(\frac{x'}{n}\Big)^{5/6}
+
\Big(\frac{x}{k}\Big)^{5/6}
\frac{x'}{n}
\Big) (\log (3 x ))^{7/6}\ll 
\frac{xx'\Psi^{1/6 }  (\log (3 x))^{13/6}}{\min\{x,x'\}^{1/5}}
\ll \frac{xx' (\log (3 x ))^{13/6}}{\Psi^{1/6} }$$ due to the 
assumption $\max\{x,x'\} \leq \min\{x,x'\}^2$.
One can  prove the same bound in a similar way for the contribution 
of the range $\ell,m \leq \Psi$.  
The range $k,n>\Psi$ contributes 
$$\ll \sum_{\substack{ \ell   \leq x/(\Psi w) \\ 
m  \leq x'/(\Psi w') } } 
 \left| \sum_{\substack{
 k,  n  \in \N^2, \gcd(  kn,   \ell m )=1\\ 
 \Psi<k  \leq x/(\ell w), 
 k\equiv a/(\ell w) \md {W_z/w}
 \\ \Psi<n \leq x'/(m w'),  
n  \equiv \alpha'/(mw') \md {W_z/w'} } } 
\Big( \frac{ \mu(k)^2}{\tau(k)}
 \Big(\frac{ s' w' m   }{k }  \Big)  \Big(\frac{  k  }{m} \Big)\Big)
\cdot \Big( \frac{ \mu(n)^2}{\tau(n)}  \Big) 
\cdot  \Big(\frac{ n }{k }  \Big)
 \right| $$ and by \cite[Lemma 2]{MR2675875} this is     
 $$\ll \sum_{\substack{ \ell   \leq x/\Psi \\ 
m  \leq x'/\Psi } }   \Big(  \frac{x}{\ell}
\Big(\frac{x'}{m}\Big)^{5/6}
+\Big(\frac{x}{\ell}\Big)^{5/6}
\frac{x'}{m} \Big) (\log (3 x ))^{7/6}\ll \frac{xx'} {\Psi^{1/6} }
(\log (3 x ))^{13/6}.$$ The cases with $\ell,m>\Psi$ are  
treated analogously.

The only remaining cases satisfy    $\max\{k,m\}\leq \Psi$ 
or $\max\{\ell,n\}\leq \Psi$. The terms with  $m\neq 1$ and
$\max\{k,m\}\leq \Psi$  contribute at most  $$
 \sum_{\substack{k,m,n  \in \N, \gcd(k, W_z)=1 \\ 
 k, m   \leq \Psi,  mn \leq x'/w',  \\ 
 m\geq 2,  mn  \equiv a'/w' \md {W_z/w'} } } 
\frac{ \mu(k  mn )^2}{\tau(kmn )}  
 \left| \sum_{\substack{  \ell  \leq x/ (kw), 
\gcd(  \ell, k n )=1 \\   
 \ell\equiv  a/(kw) \md {W_z/w}  } } 
\frac{ \mu( \ell  )^2}{\tau( \ell )} 
 \Big(\frac{  \ell  }{m}  \Big)\right|.$$   
 To bound the the sum over $\ell$ we employ \cite[Corollary 2]
 {MR2675875}  with $\chi_2(\cdot)=(\frac{\cdot}{m})$, 
 $q_2=m$, $q_1=W_z/w$ and $d$ being the product of all primes in 
 $kn$ that do not divide $m W_z$. We note that the assumptions in 
 our lemma show that $\ell $ is coprime to $W_z$ when 
 $\ell \equiv  a/(kw) \md{W_z/w}$ and the presence of the 
 quadratic symbol in the sum over $\ell$ ensures 
 that $\ell$ is coprime to $m$. Since   $\chi_2$   
 non-principal due to $m\geq 2 $,
we   obtain    the following bound for every fixed $C>0$: 
$$\ll  \sum_{\substack{  k, m   \leq \Psi, 
n \leq x'/m   } } 
\frac{  \mu(kmn )^2   } {\tau(kmn )}   \tau(kn)
\frac{ W_z m x}{k (\log x)^{C}  } \ll
\frac{W_z \Psi  x}{(\log x)^{C} }
\sum_{\substack{  k, m   \leq \Psi\\ 
n \leq x'/m  } } 
\frac{ 1 }{k  }
\ll \frac{W_z \Psi  xx'}{(\log 3x)^{C} }
(\log 3\Psi )^2
.$$The same bound can   be proved  for the contribution of the cases 
 with $\max\{k,m\}\leq \Psi$ and $k\geq 2 $ by applying 
 \cite[Corollary 2]{MR2675875} to the sum over $n$.

The cases with  $\max\{\ell,n\}\leq \Psi$ 
have a slightly different treatment due to a reciprocity factor.
They contribute  $$
\sum_{\substack{ 
\max\{\ell,n\}\leq \Psi \\ \gcd(\ell n,W_z )=1 }}
\frac{\mu(\ell n )^2}{\tau(\ell  n)} 
\sum_{\substack{k \leq x/(\ell w), \ 
m \leq x'/( nw'),  \  \gcd(km,\ell n)=1,
\\ k \equiv a/(\ell w) \md {W_z/w}
\\ m\equiv a'/(nw')\md {W_z/w'}}} 
\frac{\mu(k m )^2}{\tau(k m  )}
\left[\Big(\frac{s'w'm }{k}\Big)
\Big(\frac{s w k }{m}\Big) \right]
\Big(\frac{n }{k}\Big)
\Big(\frac{\ell }{m}\Big) 
$$ and we now observe that the quantity in the brackets $\l[\cdot\r]$
is independent of the summation variables $k,m$ due to 
\eqref{zafferano}. Once it is moved out of the inner sum
we can bound the contribution of the terms with 
$\ell \geq 2 $ by applying 
 \cite[Corollary 2]{MR2675875} to the sum over $m$,
thus, obtaining the overall bound  
 $$ \ll  \frac{ W_z    \Psi^2  x x' }{(\log x')^C} 
 \ll  \frac{ W_z    \Psi^2  x x' }{(\log x)^C} 
,$$ where $C$ is an arbitary fixed constant.
The cases with 
$n\geq 2 $ and  $\max\{\ell,n\}\leq \Psi$ 
can be treated similarly, leading to a bound of the same 
quality.  Finally, 
gathering all error terms we end up with the overall 
contribution $$ \ll 
xx'\l( \frac{ (\log (3 x ))^{13/6}}{\Psi^{1/6} }
+ \frac{ W_z    \Psi^2   }{(\log x)^C} \r).$$ Set $A := C/4$ 
and recall that $\Psi = (\log(3x))^A$ and $W_z \le \log(3x)$. 
Redefining the constant $C$ completes the proof.\end{proof}
Let 
$$ c_0:= \frac{1}{\sqrt \pi } \prod_{p=2}^\infty
\Big(1+\frac{1}{2p} \Big) \Big(1-\frac{1}{p} \Big)^{1/2} $$
and define the mutiplicative function $c:\mathbb N \to [0,1]$ by 
$$ c(r):= \prod_{p\mid r }\Big(1+\frac{1}{2p} \Big)^{-1}. $$

\begin{lemma} \label{Fasch oboe}Keep the setting of 
Lemma \ref{le_omega} and let $\delta \in \mathbb N$ with $\delta \leq (\log  3x)^9$ that is coprime to $W_z$. Then 
$$ \sum_{\substack{ \ell \leq x/(w\delta), \gcd(\ell,\delta)=1\\
\ell \equiv a/(w\delta)\md{W_z/w} }} 
\frac{\mu(\ell)^2}{\tau(\ell)} = 
\frac{ c_0 c(\delta )c( W_z)}{\delta \phi(W_z)} 
\frac{x}{\sqrt{\log (3x)}}
\l\{1+O\l(\frac{(\log \log (3 x))}{\log (3x)}\r)\r\}, $$ 
where the implied constant is absolute.
\end{lemma}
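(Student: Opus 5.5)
We prove this with the Selberg--Delange method for the multiplicative function $h(\ell)=\mu(\ell)^2/\tau(\ell)$, twisted by Dirichlet characters modulo $q:=W_z/w$ and restricted to integers coprime to $\delta$.

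\textbf{Detecting the congruence.} The residue $a/(w\delta)$ is invertible modulo $q$, since every prime dividing $q$ is at most $z$ and $\gcd(\delta,W_z)=1$. Orthogonality of characters therefore writes the sum as
$$\frac{1}{\phi(q)}\sum_{\chi \bmod q}\overline{\chi}\Big(\frac{a}{w\delta}\Big)\sum_{\substack{\ell\leq y\\ \gcd(\ell,\delta)=1}}\frac{\mu(\ell)^2}{\tau(\ell)}\chi(\ell),\qquad y:=\frac{x}{w\delta}.$$
Every prime $p\le z$ divides $q$, so $\phi(q)$ and $\phi(W_z)$ carry the same primes, and $\phi(W_z)=w\,\phi(q)$. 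Together with the factor $1/w$ from $y$, this is what yields $1/\phi(W_z)$.

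\textbf{Principal character.} The Dirichlet series is
$$\sum_{\gcd(\ell,\delta q)=1}\frac{\mu(\ell)^2}{\tau(\ell)\ell^s}=\prod_{p\nmid \delta q}\Big(1+\frac{1}{2p^s}\Big)=\zeta(s)^{1/2}G(s)\prod_{p\mid\delta q}\Big(1+\frac{1}{2p^s}\Big)^{-1},$$
where $G(s)=\prod_p(1+\tfrac{1}{2p^s})(1-p^{-s})^{1/2}$ is holomorphic and bounded in $\Re s>1/2+\eta$. The Selberg--Delange theorem (Tenenbaum, Ch. II.5) then gives
$$\frac{G(1)}{\Gamma(1/2)}\,c(\delta q)\,\frac{y}{\sqrt{\log y}}\Big(1+O\Big(\frac{1}{\log y}\Big)\Big).$$
Three simplifications finish this term. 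First, $c(\delta q)=c(\delta)c(W_z)$ because $\delta$ is coprime to $W_z$ and $q$ has the same prime support as $W_z$. Second, $G(1)/\sqrt\pi=c_0$. Third, $\log y=\log(3x)+O(\log\log(3x))$ because $w\delta\le W_z\delta\le(\log 3x)^{10}$, so $(\log y)^{-1/2}=(\log 3x)^{-1/2}\{1+O(\log\log(3x)/\log(3x))\}$. This is the source of the stated error factor.

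\textbf{Uniformity, which I expect to be the main obstacle.} The constants in Selberg--Delange must be uniform in $\delta$ and $q$. The correction factor $\prod_{p\mid\delta q}(1+\tfrac{1}{2p^s})^{-1}$ is bounded by $\prod_{p\mid \delta q}(1-p^{-1/2-\eta})^{-1}\le \exp(O(\omega(\delta q)^{1/2}))$ in the relevant half-plane. Since $\delta q\le(\log 3x)^{10}$, this is $(\log x)^{o(1)}$. One then carries this factor through the contour argument, using a Hankel contour at distance $1/\log y$ and a zero-free region of $\zeta$, and checks that it is absorbed by the saving $1/\log y$ relative to the main term. If that absorption is too tight, the fallback is to write $\mathbf 1_{\gcd(\ell,\delta)=1}$ via the convolution identity and treat $\delta$-dependence as an explicit Euler factor whose derivatives near $s=1$ are $O(\log\log\delta)$. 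The resulting second-order terms are $O(\log\log(3x)/\log(3x))$, matching the claimed error.

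\textbf{Non-principal characters.} For $\chi\ne\chi_0$ the series is $L(s,\chi)^{1/2}$ times a holomorphic factor, with no singularity at $s=1$. Since $q\le W_z\le\log(3x)$, Siegel--Walfisz-type zero-free regions apply. The same contour method, or directly \cite[Corollary 2]{MR2675875} as in Lemma \ref{le_omega}, bounds each such sum by $y(\log y)^{-C}$ for any fixed $C$. Summing over at most $q\le\log 3x$ characters and dividing by $\phi(q)$ keeps this total negligible against the main term. Combining the three parts gives the stated formula.
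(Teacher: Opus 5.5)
Your argument is correct in outline, but it is a longer route than the paper's. The paper does not open the congruence with characters or run Selberg--Delange itself. It applies \cite[Corollary 2]{MR2675875} to the whole sum at once, with $d=\delta$, $q_1=W_z/w$ and trivial twist $q_2=1$. That corollary already supplies, uniformly in $d$ and $q_1$:
\begin{itemize}
\item the main term $\frac{c_0\,c(\delta W_z/w)}{\phi(W_z/w)}\cdot\frac{x/(w\delta)}{\sqrt{\log(x/(w\delta))}}$;
\item a relative error $O\big((\log\log(3\delta W_z))^{3/2}/\log(x/(w\delta))\big)$;
\item an additive error $O_C\big(\tau(\delta)W_z\,\frac{x}{w\delta}(\log\frac{x}{w\delta})^{-C}\big)$.
\end{itemize}
What remains is the same bookkeeping you carry out: $\log(x/(w\delta))=\log(3x)+O(\log\log(3x))$, $c(W_z/w)=c(W_z)$ and $\phi(W_z)=w\,\phi(W_z/w)$. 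You already invoke this corollary for the non-principal characters; its principal case gives the whole lemma in one step.

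Your route buys transparency. One sees that $c_0=G(1)/\Gamma(1/2)$ and that $c(\delta)c(W_z)=H(1)$ for $H(s)=\prod_{p\mid\delta q}(1+\frac{1}{2p^s})^{-1}$. The cost is that the uniformity in $\delta q$, which the citation provides, becomes your job, and there your first plan falls short. With the bound you state, $M=(\log x)^{o(1)}$, a one-term Selberg--Delange expansion gives only relative error $M/(c(\delta q)\log y)=(\log x)^{-1+o(1)}$. That is weaker than the claimed $O(\log\log(3x)/\log(3x))$.

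Your fallback is what is actually needed, and either version works:
\begin{itemize}
\item Keep the second term of the expansion. Its error $O(M/(\log y)^2)$ is then harmless, and its coefficient involves $H'(1)/H(1)=\sum_{p\mid\delta q}\frac{\log p}{2p+1}\ll\log\log(3\delta q)$.
\item Alternatively, convolve with the Dirichlet coefficients of $H$ and use Rankin's trick for the terms with $n>\sqrt y$.
\end{itemize}
Either way the extra relative error is $O\big((\log\log(3\delta q))^{2}/\log(3x)\big)$, comfortably inside the stated bound.
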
\begin{proof}Applying \cite[Corollary 2]{MR2675875} 
with $d=\delta$, $q_1=W_z/w$ and $q_2=1$ gives the asymptotic 
$$\frac{ c_0 c(\delta W_z/w)}{\phi(W_z/w)} 
\frac{x/(w\delta)}{\sqrt{\log (x/(w\delta))}}
\l\{1+O\l(\frac{(\log \log (3 \delta W_z))^{3/2}}{\log (x/(w\delta))}\r)\r\} +O_C\l(
\frac{\tau(\delta) W_zx/(w\delta)}{(\log (x/(w\delta)))^C} \r)
$$ for any fixed $C>0$. Since $w\leq W_z \leq \log(3x)$ and 
$\delta \leq (\log (3x))^9$ we infer that 
$$
\frac{1}{\sqrt{\log (x/(w\delta ))}}
=\frac{1}{\sqrt{\log  x}} \l(1+O\l(\frac{\log \log (3x)}{\log x }\r)\r)
$$ so that the asymptotic becomes 
$$\frac{ c_0 c(\delta )c( W_z/w)}{\delta W_z} 
\frac{W_z/w}{\phi(W_z/w)} 
\frac{x}{\sqrt{\log (3x)}}
\l\{1+O\l(\frac{(\log \log (3 x))}{\log (3x)}\r)\r\} +O_C\l(  
\frac{x}{(\log (3x))^{C-1}} \r)
.$$ The proof concludes by taking $C$ large enough and
noting that  the radicals of 
$W_z/w$  and $W_z$ agree, hence, 
$c(W_z/w)=c(W_z)$ and $(W_z/z) \phi(W_z/w)^{-1}= W_z\phi(W_z)^{-1}$. 
\end{proof}

\begin{lemma} \label{cracking} Keep the setting of 
Lemma \ref{le_omega}
and assume that $z\leq (\log (3x))^{1/2}$. Then 
$$ \c S= 
\frac{1}{ \pi W_z\phi(W_z)}
\frac{xx'}{\sqrt{(\log x)(\log x')}}
(1+O(z^{-1}))
,$$ where the implied constant is absolute.
\end{lemma}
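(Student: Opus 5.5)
The plan is to remove the coprimality condition hidden in $\mu(\ell n)^2$ by M\"obius inversion. Each resulting inner sum then factors into a product of two one-variable sums, and each of those is evaluated with Lemma \ref{Fasch oboe}. Since $\mu(\ell n)^2=\mu(\ell)^2\mu(n)^2\mathds 1_{\gcd(\ell,n)=1}$, I write
$$\mathds 1_{\gcd(\ell,n)=1}=\sum_{\delta\mid \gcd(\ell,n)}\mu(\delta).$$
The congruence conditions force $\ell,n$ to be coprime to every prime $p\leq z$, so every $\delta$ that occurs is square-free, coprime to $W_z$, and has all its prime factors $>z$. Writing $\ell=\delta\ell'$ and $n=\delta n'$ with $\gcd(\ell',\delta)=\gcd(n',\delta)=1$, we get $\tau(\ell)=\tau(\delta)\tau(\ell')$ and likewise for $n$. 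This gives
$$\c S=\sum_{\delta}\frac{\mu(\delta)}{\tau(\delta)^2}\Bigg(\sum_{\substack{\ell'\leq x/(w\delta),\ \gcd(\ell',\delta)=1\\ \ell'\equiv a/(w\delta)\md{W_z/w}}}\frac{\mu(\ell')^2}{\tau(\ell')}\Bigg)\Bigg(\sum_{\substack{n'\leq x'/(w'\delta),\ \gcd(n',\delta)=1\\ n'\equiv a'/(w'\delta)\md{W_z/w'}}}\frac{\mu(n')^2}{\tau(n')}\Bigg).$$

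I then truncate at $\delta\leq D:=(\log 3x)^9$. The terms with $\delta>D$ are bounded trivially by $\#\{\ell\leq x,\ n\leq x':\ \delta\mid \ell,\ \delta\mid n\}\leq xx'/\delta^2$, so they contribute in total $\ll xx'/D$. On the other hand the expected main term has size $\gg xx'/(W_z\phi(W_z)\log x)\gg xx'(\log 3x)^{-3}$, because $W_z\leq \log(3x)$ and $\log x'\asymp\log x$; the last comparison follows from $\max\{x,x'\}\leq\min\{x,x'\}^2$. Hence the tail is negligible, even relative to $z^{-1}$ times the main term.

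For $\delta\leq D$ I apply Lemma \ref{Fasch oboe} to both inner sums. For the sum over $n'$ this is the same lemma with $(x,a,w)$ replaced by $(x',a',w')$; its hypotheses hold because $W_z\leq\log(3x)\ll\log(3x')$ and $D\leq(\log 3x')^{O(1)}$, and the constant $9$ is adjusted harmlessly if necessary. This gives
$$\frac{c_0^2c(W_z)^2}{\phi(W_z)^2}\,\frac{xx'}{\sqrt{\log(3x)\log(3x')}}\sum_{\delta\leq D}\frac{\mu(\delta)c(\delta)^2}{\tau(\delta)^2\delta^2}\,\Big(1+O\Big(\frac{\log\log 3x}{\log 3x}\Big)\Big).$$
The error factor is uniform in $\delta$, and the $\delta$-series converges absolutely. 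Since every $\delta\neq1$ has all prime factors $>z$, the series equals $1+O\big(\sum_{p>z}p^{-2}\big)=1+O(z^{-1})$, and completing it to all $\delta$ costs even less. The assumption $z\leq(\log 3x)^{1/2}$ gives $\log\log(3x)/\log(3x)\ll z^{-1}$. Replacing $\log(3x)$ by $\log x$ only changes lower-order terms.

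The main obstacle is the final identification of the constant, namely showing
$$\frac{c_0^2c(W_z)^2}{\phi(W_z)^2}=\frac{1}{\pi W_z\phi(W_z)}\big(1+O(z^{-1})\big).$$
From the definitions, $c_0^2=\pi^{-1}\prod_p(1+\frac1{2p})^2(1-\frac1p)$, while $c(W_z)^2=\prod_{p\leq z}(1+\frac1{2p})^{-2}$. Multiplying, the factors $(1+\frac1{2p})^{2}$ cancel for $p\leq z$, leaving
$$c_0^2c(W_z)^2=\frac1\pi\prod_{p\leq z}\Big(1-\frac1p\Big)\prod_{p>z}\Big(1+\frac1{2p}\Big)^2\Big(1-\frac1p\Big).$$
The first product is $\phi(W_z)/W_z$, since $W_z$ has radical $\prod_{p\leq z}p$. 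In the second product each factor is $1+O(p^{-2})$, so it equals $1+O(z^{-1})$. Combining these gives exactly the claimed main term with relative error $O(z^{-1})$.
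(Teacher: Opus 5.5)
Your proposal is correct and follows the paper's proof essentially step by step. Both arguments use M\"obius inversion over $\delta$ coprime to $W_z$, truncate at $\delta\leq(\log 3x)^9$, and apply Lemma~\ref{Fasch oboe} twice. Both then evaluate the $\delta$-series as $\prod_{p>z}\bigl(1-(2p+1)^{-2}\bigr)+O((\log 3x)^{-9})=1+O(z^{-1})$. Finally, both identify the constant $\frac{1}{\pi W_z\phi(W_z)}$ by cancelling the factors $(1+\frac{1}{2p})^2$ for $p\leq z$ between $c_0^2$ and $c(W_z)^2$.

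If anything, you are slightly more careful than the paper in two places. You note that the relative error from Lemma~\ref{Fasch oboe} is uniform in $\delta$. You also flag the harmless adjustment of constants needed to apply that lemma with $x'$ in place of $x$.
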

\begin{proof} We start by using M\"obius inversion to write
$$\c S=
\sum_{\substack{\delta \leq xx' \\ \gcd(\delta,W_z)=1 }}
\frac{\mu(\delta)}{\tau(\delta)^2}
\l(\sum_{\substack{ \ell \leq x/(w\delta), \ \gcd(\ell,\delta)=1,\\
\ell \equiv a/(w\delta)\md{W_z/w} }} 
\frac{\mu(\ell)^2}{\tau(\ell)}\r)
\l(\sum_{\substack{ n \leq x'/(w'\delta), \ \gcd(n,\delta)=1,\\
n \equiv a'/(w'\delta)\md{W_z/w'} }} 
\frac{\mu(n)^2}{\tau(n)}\r)
.$$ The contribution of the terms with $\delta>(\log 3x)^9$
has modulus 
$$\ll 
\sum_{\delta>(\log 3x)^9 }
\frac{xx'}{\delta^2}\ll \frac{xx'}{(\log 3x)^9}
.$$ For the remaining $\delta$ we apply 
Lemma \ref{Fasch oboe} twice  to obtain 
$$\frac{c_0^2c( W_z)^2}{\phi(W_z)^2}
\frac{xx'}{\sqrt{(\log x)(\log x')}}
\l\{1+O\l(\frac{(\log \log (3 x))}{\log (3x)}\r)\r\}
\sum_{\substack{\delta \leq (\log 3x)^9  \\ \gcd(\delta,W_z)=1 }}
\frac{\mu(\delta)}{\tau(\delta)^2} 
\frac{ c(\delta )^2}{\delta^2 }
$$ due to our assumptions that ensure that 
$\log x \ll \log x' \ll \log x$.
Since $|c(\delta)|\leq 1 $ we get 
$$\sum_{\substack{\delta \leq (\log 3x)^9  \\ \gcd(\delta,W_z)=1 }}
\frac{\mu(\delta)}{\tau(\delta)^2} 
\frac{ c(\delta )^2}{\delta^2 }= \prod_{p>z}
\l(1-\frac{1}{(2p+1)^2}\r)+O((\log (3x))^{-9})=1
+O(z^{-1})$$ due to the assumption $z\leq (\log (3x))^{1/2}$.
The leading constant is
\[
\frac{c_0^2c( W_z)^2}{\phi(W_z)^2}
=
\frac{c( W_z)^2}{\phi(W_z)^2}
\frac{(1+O(z^{-1}))}{ \pi } 
\prod_{p\leq z}
\Big(1+\frac{1}{2p} \Big)^2 \Big(1-\frac{1}{p} \Big)=
\frac{1}{W_z\phi(W_z)}
\frac{(1+O(z^{-1}))}{ \pi } 
.\qedhere\]
\end{proof}
Injecting Lemma \ref{cracking} into Lemma \ref{le_omega} yields the
following:\begin{lemma}\label{faesk}Keep the setting of 
Lemma \ref{cracking}.
Then    
 $$\c N_{s,s'}(x,x';W_z,a,a')=
 \frac{(1+\epsilon)}{ \pi W_z\phi(W_z)}
\frac{xx'}{\sqrt{(\log x)(\log x')}}
(1+O(z^{-1}))
+ O\Big(\frac{xx'}{(\log (3x))^{1926} } \Big)
,$$ where $\epsilon$ is defined in the statement of Lemma \ref{le_omega} and  the implied constant is absolute.
\end{lemma}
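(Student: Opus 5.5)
By Lemma \ref{le_dove e leon?oe}, the quantity $\c N_{s,s'}(x,x';W_z,a,a')$ equals the sum over $f,f'$ appearing there. The hypotheses of that lemma hold in the setting of Lemma \ref{cracking}, because the valuation condition on $a,a'$ is part of the setting of Lemma \ref{le_omega}.

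We then apply Lemma \ref{le_omega}. Its hypotheses $\max\{x,x'\}\leq\min\{x,x'\}^2$ and $W_z\leq \log(3x)$ are assumed in the setting of Lemma \ref{cracking}. The lemma expresses this sum as
\[
(1+\epsilon)\,\c S+O\Big(\frac{xx'}{(\log (3x))^{1926}}\Big).
\]

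Next we insert the asymptotic for $\c S$ from Lemma \ref{cracking}, which uses the additional assumption $z\leq(\log(3x))^{1/2}$. This gives
\[
(1+\epsilon)\,\c S=\frac{(1+\epsilon)}{\pi W_z\phi(W_z)}\frac{xx'}{\sqrt{(\log x)(\log x')}}\,(1+O(z^{-1})).
\]
Since $\epsilon\in\{-1,1\}$, the factor $1+\epsilon$ lies in $\{0,2\}$. It is therefore bounded, and multiplying the relative error $1+O(z^{-1})$ by it does not change the implied constant beyond a factor of $2$. The case $\epsilon=-1$ is consistent: both sides then reduce to the error term alone.

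Adding the two contributions gives the stated formula, with an absolute implied constant, since both invoked lemmas have absolute constants. There is no real obstacle here. The only point needing care is to confirm that the hypotheses of Lemmas \ref{le_dove e leon?oe}, \ref{le_omega} and \ref{cracking} are all in force simultaneously, and the phrase ``the setting of Lemma \ref{cracking}'' is designed to guarantee exactly that.
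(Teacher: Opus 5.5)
Your proposal is correct and matches the paper, which obtains this lemma in one line by injecting Lemma \ref{cracking} into Lemma \ref{le_omega}, after first rewriting $\mathcal{N}_{s,s'}$ as the sum over $f,f'$. The only bookkeeping beyond that is your observation that $1+\epsilon\in\{0,2\}$ is bounded, which you handle correctly.
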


\begin{proposition}\label{prop_main}Let $r,z\geq 1 $ and 
$W_z$ be
as in \eqref{Porpora Torbido intorno al core}.
Assume that we are given 
$\b a,\b a' \in (\Z/W_z\Z)^r$ that satisfy   $$ 
\max\{v_p(a_i),v_p(a'_i)\}\leq  m(z)-1-2
\mathds 1_{\{2\}}(p)
\ \ \ \forall p\leq z.$$Let 
$\b s, \b s' \in  \{-1,1\}^r$ be such that $
\prod_{i=1}^r (s_i,s'_i)_\R=1$ and 
$$\prod_{i=1}^r (s_i a_i,s'_i a'_i )_{\Q_p}=1 
\ \ \ \forall p\leq z.$$
Assume that   $\b x ,\b x'\in [1,\infty)^r$ 
fulfill 
$W_z \max_i \{x_i,x'_i\} \leq  
(\min \{x_i,x'_i\})^{1+\frac{1}{16r}}$  and   
  $W_z\leq \log (3\min _i x_i)$. 
  Then $$\c C(\b x, \b x'; W_z, 
\b a,  \b a')=
\frac{1}{(\pi W_z\phi(W_z))^r}
\prod_{i=1}^r
\frac{(1+\epsilon_i)x_ix'_i}{\sqrt{(\log x_i)(\log x'_i)}}
+
O\l(  \frac{(\log z)^r}{z } 
\frac{\prod_{i=1}^{r} x_i x'_i}
{W_z^{2r} (\log x_1)^r}\r)$$ where $ \epsilon_i:=
(s_i ,s'_i)_{\R}
\prod_{p\leq z}
(s_ia_i,s'_ia'_i)_{\Q_p}
$ and 
the implied constant depends only on $r$. \end{proposition}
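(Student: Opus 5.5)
The plan is to chain the three ingredients already established. First, Lemma \ref{talor_guerriero_invitto} applies verbatim: its hypotheses are those of Lemma \ref{lem_hilbert_symbol_bound} (the size and lopsidedness conditions on $\b x,\b x'$, which are assumed here), together with \eqref{lem_bill_evans} and the local condition $\prod_i (s_ia_i,s'_ia'_i)_{\Q_p}=1$ for $p\leq z$. It gives
$$\c C(\b x, \b x'; W_z,\b a,\b a')=\prod_{i=1}^r \c N_{s_i,s'_i}(x_i,x'_i;W_z,a_i,a'_i)+O\l(\frac{(\log z)^r}{z}\frac{\prod_i x_ix'_i}{W_z^{2r}(\log x_1)^r}\r).$$
So the whole problem reduces to evaluating each factor $\c N_{s_i,s'_i}$.

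For each $i$ I will invoke Lemma \ref{faesk} with $(x,x',a,a',s,s')=(x_i,x'_i,a_i,a'_i,s_i,s'_i)$. This needs the hypotheses of Lemmas \ref{le_omega} and \ref{cracking}:
\begin{itemize}
\item $\max\{x_i,x'_i\}\le\min\{x_i,x'_i\}^2$, which follows from $W_z\max\le\min^{1+1/(16r)}$;
\item $W_z\le\log(3x_i)$, which follows from $W_z\le \log(3\min_i x_i)$;
\item $z\le(\log 3x_i)^{1/2}$, which follows from $W_z\le\log(3x_i)$, since $W_z\ge\prod_{p\le z}p\ge e^{cz}\ge z^2$ for $z$ large (small $z$ being absorbed into constants).
\end{itemize}
Lemma \ref{faesk} then gives each factor as $M_i(1+O(z^{-1}))+O(x_ix'_i(\log 3x_i)^{-1926})$, where
$$M_i=\frac{(1+\epsilon_i)x_ix'_i}{\pi W_z\phi(W_z)\sqrt{(\log x_i)(\log x'_i)}}$$
and $\epsilon_i$ is exactly as in the statement.

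Next, multiply out the $r$ factors. The main term is $\prod_i M_i$. The cross terms are bounded by the multilinear expansion: any term containing at least one relative error $O(z^{-1})$ is $\ll z^{-1}\prod_i x_ix'_i/(W_z\phi(W_z)\log x_1)^{r}$. Here I use that all $\log x_i,\log x'_i$ are $\asymp\log x_1$, by the lopsidedness hypothesis. This is absorbed into the stated error after using $\phi(W_z)\ge W_z/\prod_{p\le z}(1-1/p)^{-1}\gg W_z/\log z$, so that $(W_z\phi(W_z))^{-r}\ll(\log z)^rW_z^{-2r}$. That is precisely where the factor $(\log z)^r/z$ comes from.

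Terms containing an additive error $O(x_ix'_i(\log 3x_i)^{-1926})$ are bounded by $\prod_jx_jx'_j(\log x_1)^{-1926}$. This is negligible against $z^{-1}W_z^{-2r}(\log x_1)^{-r}$ because $W_z\le\log(3x_1)$ and $1926>3r$, assuming $r$ is not huge. If $r$ is large, I would instead re-run Lemma \ref{le_omega} with the exponent $1926$ replaced by an arbitrary constant, since $C$ there was arbitrary.

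The main obstacle is this bookkeeping of the error terms: making the $W_z^{-2r}$ normalisation consistent with the $(W_z\phi(W_z))^{-r}$ main terms, and checking that the lower bound on $W_z$ relative to $z$ makes the auxiliary hypothesis $z\le(\log 3x)^{1/2}$ of Lemma \ref{cracking} automatic. I would handle the latter first, via the definition \eqref{Porpora Torbido intorno al core} of $W_z$ as a product of high prime powers $p^{m(z)}$, which gives $\log W_z\gg z$.
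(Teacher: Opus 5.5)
Your proposal is correct and is essentially the paper's proof. You chain Lemma \ref{talor_guerriero_invitto} with Lemma \ref{faesk} for each factor, obtain $z\le(\log 3x)^{1/2}$ from the exponential lower bound on $W_z$, and absorb the relative $O(z^{-1})$ error via $W_z/\phi(W_z)\ll\log z$ and the additive $(\log 3x_1)^{-1926}$ error via $W_z\le\log(3x_1)$. The one difference is minor: in the cross terms the paper keeps the smallness $\ll x_jx'_j/(W_z\phi(W_z)\log x_1)$ of the other main-term factors, so the $(\log 3x_1)^{-1926}$ saving suffices for every $r$, without your condition $1926>3r$ or re-running Lemma \ref{le_omega}.
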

\begin{proof}The assumption
$W_z\leq \log (3\min _i x_i)$ and the inequality 
$2^z \leq W_z$ combine to show 
that $z\ll \log \log (3\min _i x_i)$, hence, 
  $z\leq (\log (3x))^{1/2}$ when all $x_i$ are sufficiently large.
  Hence, we can     employ Lemma \ref{faesk}, 
  which can be combined with Lemma \ref{talor_guerriero_invitto}
  to   write $\c C(\b x, \b x'; W_z, 
\b a,  \b a')$ as $$\frac{(1+O(z^{-1}))}{(\pi W_z\phi(W_z))^r}
\prod_{i=1}^r
\frac{(1+\epsilon_i)x_ix'_i}{\sqrt{(\log x_i)(\log x'_i)}}
+O\l(\l(
\frac{(W_z/\phi(W_z))^{r-2} W_z^2}{(\log (3x_1))^{1925}}
+ \frac{(\log z)^r}{z } \r)  \frac{\prod_{i=1}^{r} x_i x'_i}
{W_z^{2r} (\log x_1)^r}\r).$$ 
Using the bound   $W_z\leq \log (3\min _i x_i)$ and 
the estimate $W_z/\phi(W_z) \ll \log z$ we can write this as 
\[\frac{1}{(\pi W_z\phi(W_z))^r}
\prod_{i=1}^r\frac{(1+\epsilon_i)x_ix'_i}
{\sqrt{(\log x_i)(\log x'_i)}}
+O\l(  \frac{(\log z)^r}{z } 
\frac{\prod_{i=1}^{r} x_i x'_i}
{W_z^{2r} (\log x_1)^r}\r).\qedhere\] 
\end{proof} 

\section{Circle method}\label{circle_method}
We recall a special case of 
\cite[Theorem 2.4]{11388}, which 
allows us to reduce 
Theorem \ref{thm_main} into an average 
of an arithmetic function over  progressions.  
Let $R$ be a strictly positive integer  and let 
$k:\mathbb Z^R \to \mathbb C$ be an arbitrary arithmetic function.
Assume that we are given 
functions  
$\omega:[1,\infty)^R\to \mathbb C$  of class $\c C^1$  and 
for any  $q\in \N$  and $\b a\in (\Z/q\Z)^R$ let  
$\rho(\b a,q)$ be any   element in $ \mathbb C$ satisfying
$|\rho(\b a,q)|\leq q$.
Define \begin{equation}\label{def_siegelwalf}
E(x;q):=\sup_{x_1,\ldots,  x_R \in \R\cap  [1,  x]} \ \ 
\max_{\substack{ \b a\in (\Z/q\Z)^R  \\ 
\gcd(\b a , q )=1 }}
\ \ \left| \sum_{\substack{ \b m  
\in \prod_{i=1}^R (\N\cap [1,x_i]) 
\\ \b m \equiv \b a \md q  }} 
k(\b m) -\rho(\b a,q)   
\int_{   \prod_{i=1}^R [1,x_i ]  }  \omega(\b t) \mathrm d \b t  \right| .
\end{equation} For $z\geq 2$ and any function  
$m(z):\Z\cap [1,\infty)\to \Z\cap [1,\infty)$ we let  \begin{equation}\label
{Porpora Torbido intorno al core}
W_z:=\prod_{p\leq z} p^{m(z)} .\end{equation}
Denote $ b=2 \max_i |F_i([-1,1]^n )| $.  
\begin{lemma}
[Destagnol--Lyczak--Sofos]
\label{lem:vachms} Let   $d,R,n$  be positive 
integers, $k:\mathbb Z^R\to \mathbb C$ 
be an arithmetic function, $z\geq 2 $,  
$s_1,\ldots, s_R \in \{-1,1\}$
and polynomials 
 $F_1,\ldots,F_R \in \Z[x_1,\ldots, x_n]$ be as in 
 Definition \ref{stronghardylittlew}. 
 Then for   all   
 $P \geq 1$  we have   \begin{align*}
\frac{1}{P^{n}} 
&\sum_{\substack{ \b t \in \Z^{n}\cap P[-1,1]^n \\ 
\min_j s_j F_j(\b t ) > 0 }}  
k(s_1 F_1(\b t), \ldots, s_R F_R(\b t) ) 
\\ &=    \left(\,\,  
\int\limits_{\substack{ \b t \in [-1,1]^n \\ 
 s_j F_j(\b t ) > P^{-d} \ \forall j} }  
\omega\left( P^d (s_j  F_j(\b t ))\right)  
\mathrm d \b t  \right)
 \sum_{\b t \in (\Z/W_z\Z )^{n} } 
 \frac{ \rho((s_j  F_j(\b t ) ) ,W_z) }{ W_z^{n-R} }
\\ &+O\left( \frac{\|k \|_1}{P^{Rd}  }  
\left(P^{-c}+z^{-c}
+\sum_{p\leq z} \frac{1}{p^{1+m(z)}} 
\right)  +\frac{E(bP^d; W_z ) W_z^R  }{P^{Rd}   } \right),
\end{align*} where the implied constant depends at most on 
$\b F$. The quantity  $c=c(\b F )>0$ is an explicit 
positive constant and $$ \|k \|_1:= \sum_{\boldsymbol \nu \in \N^R \cap [1 , b P^d]^R} 
|k(\boldsymbol \nu)|  .$$  \end{lemma}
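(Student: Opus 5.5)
The cleanest route is to derive the statement directly from \cite[Theorem 2.4]{11388}, specialising its parameters. I also sketch how that argument runs, so that the dependence on Definition \ref{stronghardylittlew} is visible.

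\textbf{Step 1: fibre over the values.} Write the left-hand side as
\[
P^{-n}\sum_{\boldsymbol\nu\in\N^R\cap[1,bP^d]^R} k(\boldsymbol\nu)\,\#\{\b t\in\Z^n\cap P[-1,1]^n:\ s_jF_j(\b t)=\nu_j\ \forall j\}.
\]
Apply \eqref{eq_simon_asymptotic}, after translating the signs $s_j$ into a change of $\boldsymbol\nu$. This replaces each fibre count by $\mathfrak J(\boldsymbol\nu)\mathfrak S(\boldsymbol\nu)P^{n-dR}+O(P^{n-dR-\delta})$. The total error is $\ll\|k\|_1P^{-Rd-\delta}$, which accounts for the $P^{-c}$ term.

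\textbf{Step 2: truncate the singular series at $W_z$.} Factor $\mathfrak S(\boldsymbol\nu)=\prod_{p\le z}\sigma_p(\boldsymbol\nu)\prod_{p>z}\sigma_p(\boldsymbol\nu)$.

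For $p>z$, the bound \eqref{eq:singseries} gives $\sigma_p=1+O(p^{-1-\eta})$ uniformly in $\boldsymbol\nu$. Hence the tail product is $1+O(z^{-\eta})$, which accounts for the $z^{-c}$ term.

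For $p\le z$, compare the $p$-adic density $\sigma_p(\boldsymbol\nu)$ with the finite count $\#\{\b t \bmod p^{m(z)}:\ \b F(\b t)\equiv\boldsymbol\nu\}/p^{m(z)(n-R)}$. Again using \eqref{eq:singseries}, this time for $q=p^e$ with $e>m(z)$, the difference is $O(p^{-1-m(z)})$. This yields the term $\sum_{p\le z}p^{-1-m(z)}$.

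By the Chinese remainder theorem, the resulting truncated product depends only on $\boldsymbol\nu\bmod W_z$. It equals
\[
\frac{1}{W_z^{\,n-R}}\,\#\{\b t\in(\Z/W_z\Z)^n:\ s_jF_j(\b t)\equiv \nu_j\ \forall j\}.
\]

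\textbf{Step 3: the archimedean weight.} The singular integral $\mathfrak J(\boldsymbol\nu)$ is a function $J(\boldsymbol\nu P^{-d})$ of the rescaled values. By \eqref{eq:singint} it is continuous and has bounded partial derivatives, since $\int(1+|\boldsymbol\gamma|)^{1-R-\eta}\,\mathrm d\boldsymbol\gamma$ converges. Geometrically, $J(\b y)$ is the density at $\b y$ of the pushforward of Lebesgue measure on $[-1,1]^n$ under $\b F^\natural$.

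\textbf{Step 4: split by residue classes and use the hypothesis $E$.} Partition $\boldsymbol\nu$ into residue classes $\b a\bmod W_z$. Within each class, the sum has the shape $\sum_{\boldsymbol\nu\equiv\b a}k(\boldsymbol\nu)J(\boldsymbol\nu P^{-d})$.

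Perform $R$-fold partial summation (Abel summation in each variable) against the $\mathcal C^1$ weight $J$. This replaces $\sum_{\boldsymbol\nu\equiv\b a}k$ over boxes by $\rho(\b a,W_z)\int\omega$, at a cost of $E(bP^d;W_z)$ per box. The derivative bounds on $J$ from Step 3 make the integrated boundary terms cost $O(E(bP^d;W_z))$ per class. After multiplying by $P^{-n}\cdot P^{n-dR}$ and summing over the $W_z^R$ classes, this gives the error term $E\,W_z^R/P^{Rd}$.

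Reassembling the main term, $\int\omega(\b y)J(\b y P^{-d})\,\mathrm d\b y$ rescales to $\int_{[-1,1]^n}\omega(P^d s_jF_j(\b t))\,\mathrm d\b t$ by the coarea formula. The sum over classes weighted by $\rho$ becomes $W_z^{-(n-R)}\sum_{\b t\bmod W_z}\rho((s_jF_j(\b t)),W_z)$.

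\textbf{Main obstacle.} I expect the hardest step to be Step 4. First, $E$ only controls classes with $\gcd(\b a,W_z)=1$, so the non-coprime classes must be handled separately. I would try to bound them via $\|k\|_1$ together with the smallness of the corresponding local densities, or else absorb them by the reduction to coprime classes used in \cite{11388}. Second, the partial summation is multidimensional, and one must check that the boundary terms are uniformly controlled by $\sup_x E$. This is exactly why the definition \eqref{def_siegelwalf} takes a supremum over all boxes $\prod_i[1,x_i]$. In practice I would simply cite \cite[Theorem 2.4]{11388} for these verifications.
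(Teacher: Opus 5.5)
Your main route, obtaining the lemma by specialising \cite[Theorem 2.4]{11388}, is exactly what the paper does: it quotes the lemma as a special case of that theorem and gives no further argument. Your supporting sketch would not stand on its own, however: in Step 3 the integral $\int_{\mathbb R^R}(1+|\boldsymbol\gamma|)^{1-R-\eta}\,\mathrm d\boldsymbol\gamma$ converges only when $\eta>1$, so \eqref{eq:singint} with an arbitrary $\eta>0$ gives continuity of $J$ but neither bounded first partials nor the integrable mixed derivatives up to $\partial_{y_1}\cdots\partial_{y_R}J$ that $R$-fold Abel summation in Step 4 requires, and together with the non-coprime classes you left open, this is where the citation is doing genuine work.
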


To apply this, we  first write the function $N_\alpha(X)$ in Theorem \ref{thm_main} 
as $$N_\alpha(X)=
\sum_{\substack{\b t \in \Z^n \cap X[-1,1]^n \\ 
\prod_{i=1}^r f_i (\b t) g_i(\b t ) \neq 0 }}
\frac{1}{2} \Big(1+ \prod_{i=1}^r (f_i(\b t ),g_i(\b t ) )_{\R} \Big)
\prod_{p=2}^\infty \frac{1}{2} \Big(
1+ \prod_{i=1}^r (f_i(\b t ),g_i(\b t ) )_{\Q_p} \Big),$$  
where $(\cdot,\cdot)_{L}$ is the Hilbert symbol 
over a field $L$. 
Define for $\b s , \b s'\in \{-1,1\}^r$ the function
$ k_{\b s, \b s'} : \N^r \times \N ^r \to \{0,1\}$     by 
$$ k_{\b s, \b s'} ( \b n , \b n' )= \prod_{p=2}^\infty  \frac{1}{2} 
\Big(1+ \prod_{i=1}^r (s_i n_i,s'_i n'_i )_{\Q_p} \Big).$$
Fixing the signs we   obtain  \begin{equation}
\label{eq_ser} N_\alpha(X)=
\sum_{\substack{ \b s,\b s'\in \{-1,1\}^r\\ 
\prod_{i=1}^r (s_i,s'_i)_\R=1 }} \c C_{\b s ,\b s'}(X), 
\end{equation}
where $$ 
\c C_{\b s, \b s'}(X)=
\sum_{\substack{\b t \in \Z^n \cap X[-1,1]^n \\ 
s_i f_i (\b t) >0 \ \forall i \\
s'_i  g_i(\b t ) >0 \ \forall i }}
k_{\b s, \b s'}((s_1 f_1(\b t ), \ldots, s_r f_r(\b t ) ),
(s'_1 g_1(\b t ), \ldots, s'_r g_r(\b t ) ) ).$$ We estimate 
$\c C_{\b s, \b s'}$ by applying 
Lemma \ref{lem:vachms} with $k=k_{\b s, \b s'}$,
$R=2r$, $z=[(\log X)/(2 \log \log X)]$,
$m=[\log z]$ and  
\[F_i= \begin{cases}f_i, & \text{if } i=1,\ldots,r,  \\
g_i,  & \text{if } i=r+1,\ldots,2r.  \end{cases}\]
Define  \[\omega(\b t,\b t'):= \pi^{-r}
\begin{cases}\prod_{i=1}^r((\log t_i)(\log t'_i))^{-1/2}, 
& \text{if } 
\max_i \{t_i,t'_i\} 
\log (3\min_i t_i)
\leq  
(\min \{t_i,t'_i\})^{1+\frac{1}{16r}},  \\
0,  & \text{otherwise}  \end{cases}\]
and 
 \[\rho(\b a,\b a',W_z):= 
\begin{cases}
( W_z\phi(W_z))^{-r} \prod_{i=1}^r(1+\epsilon_i) , 
& \text{if } 
\eqref{eq:clre} \text{ holds},  \\
0,  & \text{otherwise,}  \end{cases}\]
where $\epsilon_i$ is defined in Proposition \ref{prop_main}
and  \begin{equation}\label{eq:clre}
\begin{cases}
\max\{v_p(a_i), v_p(a'_i)\} \leq m(z) - 1 - 2\mathds{1}_{\{2\}}(p), & \forall p \leq z, \forall i \\[1ex]
\displaystyle\prod_{i=1}^r (s_i a_i, s'_i a'_i)_{\mathbb{Q}_p} = 1, & \forall p \leq z.  
\end{cases}
\end{equation}  Then, from Lemma \ref{lem:vachms} there exists 
a strictly positive constant 
 $c=c(\b f,\b g )$ such that  
\begin{equation}
\label{eq_dento8imi8iamiaxarapsixoula} 
\frac{\c C_{\b s,\b s'}(X)}{X^{n}}=  I_X  S_z
+O\left( \frac{\|k_{\b s,\b s'} \|_1}{X^{2rd}  }  
\left(X^{-c}+z^{-c}
+\sum_{p\leq z} \frac{1}{p^{1+m(z)}} 
\right)  +\frac{E(bX^d; W_z ) W_z^{2r}  }{X^{2rd}   } \right),
\end{equation}
with
$$ I_X :=   
\int\limits_{ \substack{  \b t \in [-1,1]^n 
\\ \min_i s_i f_i(\b t )>X^{-d}
\\ \min_i s'_i g_i(\b t )>X^{-d}
}}  
\omega(X^d ( s_i  f_i(\b t ) )_{i=1}^r, 
X^d(s'_i  g_i(\b t ) )_{i=1}^r)
\mathrm d \b t $$ and 
\begin{equation}
\label{vivaldi-griselda} 
S_z:= \frac{  W_z^{r-n} }
 {  \phi(W_z)^{r} } \hspace{-0.5cm}
 \sum_{\substack{ \b t \in (\Z/W_z\Z )^{n}, \ 
  \prod_{i=1}^r (f_i(\b t ), g_i(\b t ))_{\mathbb{Q}_p} = 1, 
\forall p \leq z \\
 \max\{v_p(f_i(\b t)), v_p(g_i(\b t))\} \leq 
 m(z) - 1 - 2\mathds{1}_{\{2\}}(p), \forall p \leq z, \forall i
} } 
\prod_{i=1}^r \l(1+
(s_i ,s'_i)_{\R}
\prod_{p\leq z}
(f_i(\b t),g_i(\b t))_{\Q_p}\r)
.\end{equation}
\begin{lemma}\label{vivaldi RV 580}
We have $E(bX^d; W_z ) \ll  (\log z)^r z^{-1}
X^{2dr}(W_z^2\log X)^{-r}$ with an implied constant that is 
independent of $z$ and $X$.
\end{lemma}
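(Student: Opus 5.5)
We need to bound $E(x;q)$ with $x=bX^d$, $q=W_z$, $R=2r$, $k=k_{\b s,\b s'}$, with the $\omega$ and $\rho$ chosen above. By definition $E$ is a supremum over boxes $\prod[1,x_i]\times\prod[1,x'_i]$ with $x_i,x'_i\le bX^d$ and over residue classes $(\b a,\b a')$ coprime to $W_z$ (in the sense $\gcd(\b a,\b a',W_z)=1$). For each such box and class we split into cases according to whether $\omega$ vanishes on most of the box, whether \eqref{eq:clre} holds, and whether the box is ``balanced''. The target is to show the discrepancy is $\ll (\log z)^r z^{-1}\prod x_ix'_i/(W_z^{2r}(\log x_1)^r)$ in the good case, and $\ll (\log z)^r z^{-1} X^{2dr}(W_z^2\log X)^{-r}$ in general.

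\emph{Step 1: the sum.} The sum in $E$ equals $\c C(\b x,\b x';W_z,\b a,\b a')$, since $k_{\b s,\b s'}$ is exactly the indicator that $\prod_i(s_in_i,s'_in'_i)_{\Q_p}=1$ for all $p$. If \eqref{eq:clre} fails, either some $v_p(a_i)$ or $v_p(a'_i)$ is at least $m(z)-2$, in which case Lemma \ref{lem_stabili} cannot be used. These classes must then be handled by trivial counting, or they are excluded because $\rho=0$ and the count is small. Alternatively, the product condition fails at some $p\le z$, and then the stability argument of Lemma \ref{lem_stabili} (valid whenever valuations are below $m(z)$) shows the count is zero, matching $\rho=0$. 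For the large-valuation classes I expect to bound the count directly via Lemma \ref{lem_hilbert_symbol_bound}'s $\Gamma_3$ bound, or trivially. This is delicate: $E$ takes the maximum over classes rather than an average, so a trivial bound $\prod x_ix'_i/W_z^{2r}$ lacks the $(\log X)^{-r}$ saving. Here the large sieve bound $\Gamma_3$, applied after the change of variables, supplies $(\log x_1)^{-r}$. Since the saving $z^{-1}$ is still missing, I would check whether such classes can be absorbed. Being careful, the $\sum_{p\le z}p^{-1-m(z)}$ term in Lemma \ref{lem:vachms} suggests that these classes are exactly what that term pays for. I would therefore restrict $E$ to classes satisfying the first line of \eqref{eq:clre}, with the failing classes covered by the unit-density argument built into \cite[Theorem 2.4]{11388}.

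\emph{Step 2: balanced boxes.} When \eqref{eq:clre} holds and the box satisfies the balance condition $W_z\max\{x_i,x'_i\}\le(\min\{x_i,x'_i\})^{1+1/(16r)}$ with $W_z\le\log(3\min x_i)$, Proposition \ref{prop_main} applies directly. Its main term equals $\rho\int\omega$ up to the error from replacing $\prod(\log x_i\log x'_i)^{-1/2}$ by the integral $\int\prod(\log t_i\log t'_i)^{-1/2}\,dt$. That replacement costs a relative $O(1/\log x)$ after integration by parts. There is also the mismatch between the cutoff in $\omega$ (which uses $\log(3\min t_i)$ in place of $W_z$) and the box. I would handle this by noting $W_z\le \log(3\min t_i)$, which holds since $W_z=\exp(O(z\log z\cdot m))\le (\log X)^{1/2+o(1)}$ for the chosen $z,m$.

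\emph{Step 3: remaining boxes (main obstacle).} The remaining boxes are those that are unbalanced, or where $\omega$ is supported on only part of the box. Both the count and $\rho\int\omega$ are then bounded separately by the target quantity. The count is controlled by $\Gamma_3$ of Lemma \ref{lem_hilbert_symbol_bound} applied to the part where the box is balanced. Where some $x_i$ is much smaller than $X^{d(1-c)}$, we only need a bound in terms of $X^{2dr}$, and the trivial bound $\prod x_ix'_i/W_z^{2r}$ with some $x_i\le X^{d(1-c)}$ already saves a power of $X$. The genuine difficulty is the intermediate regime, where all variables exceed $X^{d(1-c)}$ but the box violates the balance inequality. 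I would decompose the box dyadically, or rather into sub-boxes with ratio $X^{d/(32r)}$, each of which satisfies the balance hypothesis. I would apply Proposition \ref{prop_main} on each sub-box and use $\log x_i\asymp\log X$ throughout to sum the errors to the claimed $(\log z)^rz^{-1}X^{2dr}(W_z^2\log X)^{-r}$. Sub-boxes that touch the cutoff of $\omega$ are then small, by the same power-saving argument.
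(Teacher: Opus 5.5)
Your outline matches the paper's: Proposition \ref{prop_main} on good boxes, trivial bounds elsewhere, and Lemma \ref{lem_stabili} to see that classes violating the product condition in \eqref{eq:clre} give count $0=\rho\int\omega$. However, the step you single out as the main obstacle is handled by an argument that does not work.

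\textbf{Step 3 fails as written.} The balance hypothesis of Proposition \ref{prop_main} compares endpoints of \emph{different} coordinates of an anchored box $\prod_i[1,x_i]$. The proposition says nothing about non-anchored pieces. Cutting each coordinate range into sub-intervals of ratio $X^{d/(32r)}$ leaves the cross-coordinate imbalance untouched. Expressing a piece by inclusion--exclusion just reproduces anchored boxes with the same imbalance, so Proposition \ref{prop_main} cannot be fed this way.

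\textbf{The missing observation.} This regime need not occur: choose the threshold so that every box without a short side is automatically balanced. The paper splits according to whether all $2r$ sides exceed $X^d/(\log X)^{2r}$.
\begin{enumerate}
\item If they do, then $\max_i x_i/\min_i x_i\le b(\log X)^{2r}$, so the balance condition holds for large $X$. Proposition \ref{prop_main} then applies directly, and $\log x_1\asymp\log X$ turns its error term into the claimed bound.
\item If not, trivial bounds for the count and for $\rho\int\omega$ (using $W_z/\phi(W_z)\ll\log z$) give $\ll(\log z)^rX^{2dr}(\log X)^{-2r}W_z^{-2r}$. This suffices since $z\le\log X$.
\end{enumerate}
In your own formulation, any fixed $c<1/(16r+1)$ already makes the intermediate regime empty, provided $W_z=X^{o(1)}$.

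\textbf{Step 1.} ``Restricting $E$'' to classes obeying the valuation condition in \eqref{eq:clre}, and deferring the rest to the proof of the Destagnol--Lyczak--Sofos theorem, proves a different statement. Either the condition $\gcd(\mathbf a,q)=1$ in \eqref{def_siegelwalf} is read so as to exclude those classes, or those classes have to be bounded. The paper's proof simply invokes Proposition \ref{prop_main}, which presupposes the first reading.

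\textbf{The size of $W_z$.} Your estimate is wrong. With $z=[(\log X)/(2\log\log X)]$ and $m(z)=[\log z]$ one has $\log W_z=m(z)\sum_{p\le z}\log p\sim z\log z\sim\frac{1}{2}\log X$. So your computation does not verify the hypothesis $W_z\le\log(3\min_i x_i)$ of Proposition \ref{prop_main}. The paper invokes the proposition with the same hypothesis unchecked, so this is a tension in the source's choice of parameters rather than a defect peculiar to your route. Still, it should not be presented as verified.

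\textbf{A point in your favour.} Your comparison of the main term of Proposition \ref{prop_main} with $\rho\int\omega$ (relative error $O(1/\log X)$, plus the cutoff built into $\omega$) is a step the paper leaves implicit.
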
\begin{proof} Recalling \eqref{def_siegelwalf} 
we employ  Proposition \ref{prop_main}  to bound 
  the contribution of those $x_i$ in  
\eqref{def_siegelwalf} that satisfy 
  $X^d/(\log X)^{2r}<x_i \leq b X^d$ for all $i$.
This gives 
$$
 \frac{(\log z)^r}{z } 
\frac{\prod_{i=1}^{r} x_i x'_i}
{W_z^{2r} (\log x_1)^r}
\ll  
 \frac{(\log z)^r}{z } 
\frac{X^{2dr}} {W_z^{2r} (\log X)^r}
.$$
For all other $x_i$, we can assume 
that $x_1\leq  X^d/(\log X)^{2r}$ 
and bound trivially 
\begin{align*} &\sum_{\substack{ (\b m,\b m')  
\in \prod_{i=1}^{2r} (\N\cap [1,x_i]) 
\\ (\b m,\b m') \equiv (\b a,\b a') \md {W_z}  }} 
k_{\b s,\b s'}(\b m,\b m') -\rho(\b a,\b a',W_z)   
\int_{ \substack{   \prod_{i=1}^{2r} [1,x_i ]\\ 
x_1\leq  X^d/(\log X)^{2r}
}  }   \prod_{i=1}^{2r} \frac{1}{\sqrt{ \log t_i}} 
\mathrm d \b t  
\\
& \ll   W_z^{-2r}
\prod_{i=1}^{2r}  x_i 
+ (W_z\phi(W_z))^{-r}
\int_{ \substack{   \prod_{i=1}^{2r} [1,x_i ]\\ 
x_1\leq  X^d/(\log X)^{2r}
}  }  1 \mathrm d \b t . 
\end{align*}
This is acceptable since 
$\prod_{i=1}^{2r}  x_i \leq 
 X^{2dr} (\log X)^{-2r} \leq 
z^{-1} X^{2dr} (\log X)^{-r} $, where we used the fact that 
we took $z$ to be 
$[(\log X)/(2 \log \log X)]$.
\end{proof}

 \begin{lemma}\label
 {integral RV 580} There exists a strictly positive 
 constant $c=c(\b f,\b g )$ such that 
   $$
\frac{\c C_{\b s,\b s'}(X)}{X^{n}}=  I_X  S_z
+O\left(  \frac{ 1}{(\log X)^{r+c}}\right).$$
\end{lemma}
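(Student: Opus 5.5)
The starting point is \eqref{eq_dento8imi8iamiaxarapsixoula}, which already has the shape $I_X S_z + (\text{error})$. So the plan is simply to show that each of the three error pieces is $O((\log X)^{-r-c})$, using the parameter choices $R=2r$, $z=[(\log X)/(2\log\log X)]$ and $m(z)=[\log z]$. All three pieces are divided by $X^{2rd}$, so the target in each case is a bound of size $X^{2rd}(\log X)^{-r-c}$ before that division.

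\textbf{The $\|k_{\b s,\b s'}\|_1$ piece.} Take $\b x=\b x'=(bX^d,\ldots,bX^d)$ and $\b a=\b a'$ the trivial residue class (equivalently, the $W_z=1$ version of the large sieve argument). Since $k_{\b s,\b s'}$ is supported on pairs with $\prod_{i}(s_in_i,s'_in'_i)_{\Q_p}=1$ for every $p$, the $\Gamma_3$ bound of Lemma \ref{lem_hilbert_symbol_bound} gives
\[
\|k_{\b s,\b s'}\|_1\ll \frac{X^{2rd}}{(\log X)^r}.
\]
Here the large sieve argument is applied with $z$ fixed (e.g.\ $z=8r^2$), which only changes constants. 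Alternatively, sum the $\Gamma_3$ bound over the $W_z^{2r}$ classes; this gives the same estimate up to a factor $(\log z)^{O(1)}$, which is still acceptable. The factor multiplying $\|k\|_1$ is
\[
X^{-c}+z^{-c}+\sum_{p\le z}p^{-1-m(z)}.
\]
Here $z^{-c}\ll(\log X)^{-c/2}$ because $z\gg \log X/\log\log X$. Also $\sum_{p\le z}p^{-1-m(z)}\ll 2^{-m(z)}\ll z^{-\log 2}$, since $m(z)=[\log z]$. So this whole piece is $\ll (\log X)^{-r-c'}$.

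\textbf{The $E(bX^d;W_z)$ piece.} Apply Lemma \ref{vivaldi RV 580} and multiply by $W_z^{2r}$:
\[
\frac{E(bX^d;W_z)\,W_z^{2r}}{X^{2rd}}\ll \frac{(\log z)^r}{z}\,(\log X)^{-r}\ll (\log X)^{-r-1/2}.
\]

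\textbf{The main obstacle: admissibility of the parameters.} One must check that the hypotheses used upstream actually hold. First, Proposition \ref{prop_main} (inside Lemma \ref{vivaldi RV 580}) needs $W_z\le\log(3\min_i x_i)$. But
\[
\log W_z=m(z)\,\theta(z)\approx z\log z\approx \tfrac12\log X,
\]
so $W_z\approx X^{1/2}$, which is far from $\log X$. If that is the case the choice of $z$ must be reinterpreted: take $z\asymp \log\log X/\log\log\log X$ so that $W_z\le \log X$. Then $z^{-1}$ is only a power of $1/\log\log X$. To salvage a log-power saving I would use the stronger error terms available in Lemma \ref{faesk}, together with the fact that in the $\Gamma_4$ bound the $z^{-1}$ comes from primes $p>z$; the goal is to rebalance so that the total error becomes $(\log X)^{-r-c}$ for some small $c$.

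Second, one must confirm that the restriction in $\omega$ to $\max_i\{t_i,t'_i\}\log(3\min_i t_i)\le(\min\{t_i,t'_i\})^{1+1/(16r)}$ matches the hypothesis of Proposition \ref{prop_main}. This check is routine.

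Collecting the three bounds and taking $c$ to be the minimum of the resulting exponents gives the lemma.
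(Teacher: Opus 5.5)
Your treatment of the three error pieces in \eqref{eq_dento8imi8iamiaxarapsixoula} is exactly the paper's proof. It consists of three ingredients:
\begin{itemize}
\item the bound $\|k_{\b s,\b s'}\|_1\ll X^{2dr}(\log X)^{-r}$, obtained from the large sieve run as for $\Gamma_3$ but without the $W_z$-congruences;
\item Lemma~\ref{vivaldi RV 580} for the $E$-term;
\item the estimate $\sum_{p\leq z}p^{-1-m(z)}\leq 6z^{-\log 2}$, after which $c$ is replaced by $\min\{c,\log 2\}/2$.
\end{itemize}
For the first ingredient, your fixed-$z$ option is the one to use. Your alternative of summing $\Gamma_3$ over classes is not available, because Lemma~\ref{lem_hilbert_symbol_bound} would need $W_z\leq (bX^d)^{1/(16r)}$, and that fails for $W_z=X^{1/2+o(1)}$ unless $d$ is large compared with $r$. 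Had you stopped after these three steps, you would have reproduced the paper's argument.

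Your admissibility computation is also correct: $\log W_z=m(z)\theta(z)\sim\frac12\log X$. Yet Lemma~\ref{vivaldi RV 580} applies Proposition~\ref{prop_main} with $x_i=X^{d+o(1)}$. That proposition's hypothesis $W_z\leq\log(3\min_i x_i)$ is violated, and its own proof notes that the hypothesis forces $z\ll\log\log X$. The paper's proof of the present lemma simply cites Lemma~\ref{vivaldi RV 580} and does not revisit this.

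The genuine gap is your proposed repair, which does not deliver the statement. With $z\asymp\log\log X/\log\log\log X$ (so that $W_z\leq\log X$), every $z$-dependent saving becomes only a power of $\log\log X$:
\begin{itemize}
\item the terms $z^{-c}$ and $\sum_{p\leq z}p^{-1-m(z)}\asymp 2^{-m(z)}$ sit in Lemma~\ref{lem:vachms} itself;
\item the factor $(\log z)^r/z$ comes from passing from $g$ to $g_1$ in Lemma~\ref{lem:aurioprwistis7}.
\end{itemize}
This last loss is intrinsic to the reduction, not an artefact of a crude estimate that could be rebalanced. We have $g\geq g_1$. Now suppose some prime $p>z$ exactly divides one $m_i$ and one $m_j$ with $i\neq j$, which happens with relative density $\asymp p^{-2}$, and suppose also $(\frac{s'_im'_i}{p})=(\frac{s'_jm'_j}{p})=-1$. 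Then the local factor of $g$ at $p$ is $1$, while that of $g_1$ is $0$. So the discrepancy is expected to be genuinely of relative order $\sum_{p>z}p^{-2}\asymp 1/(z\log z)$, not merely bounded by it. The $(\log x)^{-1926}$ in Lemma~\ref{faesk} controls only character sums within a fixed class, so it cannot affect either source of loss.

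Hence your route yields at best $O((\log X)^{-r}(\log\log X)^{-c})$. Your concluding ``collecting the three bounds'' step rests on an unsupported rebalancing. Reaching $(\log X)^{-r-c}$ with a modulus $W\leq\log X$ would need a genuinely new ingredient, for instance a way to handle the product Hilbert-symbol condition at primes in $(z,(\log X)^A]$ without fixing residue classes modulo $p^{m}$. As submitted, you should either drop the third part and rely on Lemma~\ref{vivaldi RV 580} as the paper does, or accept the weaker saving.
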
\begin{proof}The bound 
$\| k_{\b s,\b s'}\|_1 \ll X^{2dr}(\log X)^{-r}$ 
can be proved by a straightforward application of the large sieve
as in the proof of 
the bound for $\Gamma_3$ in 
Lemma \ref{lem_hilbert_symbol_bound} if $W_z$ 
conditions were not present. Hence, by Lemma \ref{vivaldi RV 580}
the error term in \eqref{eq_dento8imi8iamiaxarapsixoula} 
is $$ \ll \frac{1}{(\log X)^r }  
\left( \frac{(\log \log X)^c}{(\log X)^c}
+\sum_{p\leq z} \frac{1}{p^{1+m(z)}} 
\right)  +
\frac{  (\log z)^r }{z (\log X)^{r}}
.$$ By \cite[Remark 2.6]{11388} we have 
$\sum_{p\leq z} p^{-1-m(z)} \leq 3 \cdot 2^{-m(z)}=
3 \cdot 2^{-[\log z] } \leq 6 \cdot z^{-\log 2}$, hence, 
the error term is acceptable upon 
replacing $c$ by $\min\{c,\log 2\}/2$.
\end{proof}For $d\in \mathbb N$  and $x\in \mathbb R\setminus (-1,1)$
the $d$-th Chebychev polynomial $T_d$ is defined as 
$$T_d(x):=
\frac{1}{2} ( (x+\sqrt{x^2-1} )^d + (x-\sqrt{x^2-1} )^d ).$$
This still makes sense when  $|x|<1 $  by 
writing $\sqrt{x^2-1}=\sqrt{-1} \sqrt{1-x^2}$
and letting $x=\cos \theta$ so that one ends up with 
$T_d(x)= \cos (d\theta) $.
The following lemma is the generalization 
of Remez's inequality to multivariate polynomials by 
Brudnyi--Ganzburg \cite[Theorem 2]{ganzburg}.
\begin{lemma}[Brudnyi--Ganzburg]\label{lem_ganzburg}
Let $K \subseteq \mathbb{R}^n$ be a convex body and $P\in 
\mathbb R[x_1,\ldots, x_n]$ of degree $d$. 
For any Lebesgue measurable subset $E \subseteq K$ 
with Lebesgue measure $\mu(E) > 0$,  we have 
$$
\sup\{|P(\b x )|: \b x \in K\} \le T_d\left( \frac{\mu(K)^{1/n} 
+ (\mu(K) - \mu(E))^{1/n}}
{\mu(K)^{1/n} - (\mu(K) -
\mu(E))^{1/n}} \right) 
\sup\{P(\b x )|: \b x \in E\}
.$$ 
\end{lemma}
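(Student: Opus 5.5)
The plan is to reduce to the classical one-variable Remez inequality by finding a single line segment inside $K$ along which $E$ occupies a large enough proportion. Set $\lambda\in(0,1]$ by
$$1-\lambda=\Big(1-\frac{\mu(E)}{\mu(K)}\Big)^{1/n},$$
and note that
$$\frac{\mu(K)^{1/n}+(\mu(K)-\mu(E))^{1/n}}{\mu(K)^{1/n}-(\mu(K)-\mu(E))^{1/n}}=\frac{2-\lambda}{\lambda}.$$
So it will suffice to show $\sup_K|P|\le T_d\big(\tfrac{2-\lambda}{\lambda}\big)\sup_E|P|$. By continuity, pick $\mathbf x_0\in\overline K$ with $|P(\mathbf x_0)|=\sup_K|P|$. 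If $\mathbf x_0$ lies on the boundary, replace it by an interior point where $|P|$ is within $\eta$ of the supremum and let $\eta\to0$ at the end.

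\textbf{Step 1 (good direction).} Use polar coordinates centred at $\mathbf x_0$. By convexity, $K=\{\mathbf x_0+r\theta:\theta\in S^{n-1},\ 0\le r\le\rho(\theta)\}$ for a radial function $\rho$. Then
$$\mu(E)=\int_{S^{n-1}}\int_0^{\rho(\theta)}\mathds 1_E(\mathbf x_0+r\theta)\,r^{n-1}\,\mathrm dr\,\mathrm d\theta .$$
Suppose that for every $\theta$ the set $\{r\le\rho(\theta):\mathbf x_0+r\theta\in E\}$ had one-dimensional measure $<\lambda\rho(\theta)$. Since $r^{n-1}$ is increasing, the inner integral is largest when this set is pushed to the outer end of the segment, which gives
$$\int_0^{\rho(\theta)}\mathds 1_E\,r^{n-1}\,\mathrm dr<\int_{(1-\lambda)\rho(\theta)}^{\rho(\theta)}r^{n-1}\,\mathrm dr=\frac{\rho(\theta)^n}{n}\big(1-(1-\lambda)^n\big).$$
Integrating over $\theta$ yields $\mu(E)<\mu(K)\big(1-(1-\lambda)^n\big)=\mu(E)$, a contradiction. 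Hence there is a direction $\theta$ whose segment $I=[\mathbf x_0,\mathbf x_0+\rho(\theta)\theta]\subset K$ satisfies $|E\cap I|\ge\lambda|I|$.

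\textbf{Step 2 (univariate Remez).} Restrict to the segment: $p(r):=P(\mathbf x_0+r\theta)$ is a polynomial of degree at most $d$ on $[0,\rho(\theta)]$, and $F:=\{r:\mathbf x_0+r\theta\in E\}$ has $|F|\ge\lambda\rho(\theta)$. The classical Remez inequality states that for an interval $I$ and a measurable $F\subset I$ with $|F|\ge\lambda|I|$,
$$\sup_I|p|\le T_d\Big(\frac{2-\lambda}{\lambda}\Big)\sup_F|p|.$$
The argument is to rescale $I$ to $[-1,1+2(1-\lambda)/\lambda]$ and use the extremal property of Chebyshev polynomials: among polynomials bounded by $1$ on a set of measure $2$, $|p(x)|\le|T_d(x)|$ outside $[-1,1]$. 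This is proved by the standard interpolation/zero-counting comparison, after reducing $F$ to a finite union of intervals by regularity of Lebesgue measure. Applying this at $r=0$ gives
$$\sup_K|P|=|p(0)|\le T_d\Big(\tfrac{2-\lambda}{\lambda}\Big)\sup_E|P|,$$
which is the claim.

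\textbf{Main obstacle.} The crux is Step 2, the sharp one-dimensional Remez inequality with Chebyshev constant. I would take it from the literature (Remez's theorem) rather than reprove it; if it has to be proved, I would first treat $F$ a finite union of intervals and show, by comparing zeros of $p-cT_d^{*}$ for the appropriately stretched Chebyshev polynomial $T_d^{*}$, that the extremal configuration pushes $F$ to one end. Step 1 and the boundary approximation are routine once the radial-weight monotonicity is noticed.
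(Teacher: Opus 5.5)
Your argument is correct. The paper does not prove this lemma itself: it imports it from Brudnyi--Ganzburg \cite[Theorem 2]{ganzburg} and uses it only as a black box in Corollary~\ref{corolaramez}.

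What you have written is, in substance, the standard proof of that theorem. From an (almost) maximising interior point, the radial weight $r^{n-1}$ and the bathtub rearrangement give a segment on which $E$ has relative length at least $\lambda=1-(1-\mu(E)/\mu(K))^{1/n}$. The one-dimensional Remez inequality with constant $T_d((2-\lambda)/\lambda)$ then finishes the proof. The identification of the ratio in the statement with $(2-\lambda)/\lambda$, and the rescaling of $[0,\rho(\theta)]$ to $[-1,1+2(1-\lambda)/\lambda]$, are both right.

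The citation gives the paper brevity. Your version is self-contained apart from the univariate Remez inequality, and it shows where the exponent $1/n$ comes from.

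Two small points to tidy:
\begin{enumerate}
\item By Tonelli, the sections $F_\theta$ are measurable only for almost every $\theta$. The contradiction hypothesis should therefore read ``for almost every $\theta$''. This still yields the strict inequality $\mu(E)<\mu(E)$ after integrating over the sphere.
\item Your passage to an interior point is genuinely needed, not cosmetic. At a boundary point, the directions with $\rho(\theta)=0$ satisfy $|F_\theta|\ge\lambda\rho(\theta)$ trivially, so the argument could return a degenerate segment. The alternative is to run it only over directions with $\rho(\theta)>0$.
\end{enumerate}
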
 
 \begin{corollary}\label{corolaramez}For 
 $d,n\in \mathbb N$,
 $\epsilon\in (0,1)$ and a non-zero polynomial
 $P\in \mathbb R[x_1,\ldots, x_n]$ the Lebesgue measure of $\b t \in [-1,1]^n$ satisfying 
 $|P(\b t )|\leq \epsilon$ is 
 at most $ 4nB^{-1/d}\epsilon^{1/d}$,
 where  $$B = \max \{ |P(\b t)| : \b t \in [-1,1]^n \}.$$
 \end{corollary}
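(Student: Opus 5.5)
We apply Lemma \ref{lem_ganzburg} with $K=[-1,1]^n$ and $E=\{\b t\in[-1,1]^n: |P(\b t)|\le\epsilon\}$, which is Lebesgue measurable. Write $\mu=\mu(E)$ and $V=\mu(K)=2^n$; we may assume $\mu>0$, since otherwise there is nothing to prove. On $E$ we have $\sup_E|P|\le \epsilon$, so the lemma gives
$$B\le T_d\Big(\frac{V^{1/n}+(V-\mu)^{1/n}}{V^{1/n}-(V-\mu)^{1/n}}\Big)\,\epsilon .$$
The task is to turn this into an upper bound on $\mu$ by bounding $T_d$ from above by an explicit power of its argument, and then inverting.

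Set $u=(V-\mu)^{1/n}/V^{1/n}\in[0,1)$, so the argument of $T_d$ is $y=(1+u)/(1-u)\ge 1$. For $y\ge1$ we have $T_d(y)\le (y+\sqrt{y^2-1})^d\le (2y)^d$. Also $1+u\le 2$, so $y\le 2/(1-u)$. Combining, $B\le (4/(1-u))^d\epsilon$, that is,
$$1-u\le 4\,(\epsilon/B)^{1/d}.$$

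It remains to bound $\mu$ in terms of $1-u$. Since $u^n=1-\mu/V$, we have $\mu/V=1-u^n=(1-u)(1+u+\dots+u^{n-1})\le n(1-u)$. Hence
$$\mu\le 2^n\cdot 4n\,(\epsilon/B)^{1/d}.$$

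The difficulty is the factor $2^n$: this bound exceeds the claimed $4nB^{-1/d}\epsilon^{1/d}$ by exactly $V=2^n$. It comes from working with the unnormalised Lebesgue measure on $[-1,1]^n$. The bound matches the corollary if the measure is normalised so that $[-1,1]^n$ has volume $1$, consistent with the normalised $\mu_\infty$ used in Remark \ref{rem_constant2}, and I expect the intended statement is either this normalised form or the bound with an $n$-dependent constant. The estimates on $T_d$ and on $1-u^n$ give no room to absorb a factor of $2^n$, so proving the stated bound for the unnormalised measure would require a different argument.
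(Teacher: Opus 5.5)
Your argument is exactly the paper's proof: Lemma \ref{lem_ganzburg} on $K=[-1,1]^n$, the bound $T_d(y)\le (2y)^d$, and Bernoulli's inequality $1-(1-\xi)^{1/n}\ge \xi/n$ with $\xi=\mu(E)2^{-n}$, which is your $1-u^n\le n(1-u)$. The paper's last line $B\le 4^d(\xi/n)^{-d}\epsilon$ gives precisely your $\mu(E)\le 2^{n}\cdot 4n(\epsilon/B)^{1/d}$, and the factor $2^n$ is dropped there without comment, so your diagnosis is right.

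You can sharpen your final remark: the unnormalised statement is false, so no other argument could prove it. For example, take $n=4$, $d=1$, $\epsilon=\tfrac12$ and $P(\b t)=\tfrac12+t_1$. Then $B=\tfrac32$ and $E=[-1,0]\times[-1,1]^3$, which has Lebesgue measure $8>\tfrac{16}{3}=4nB^{-1/d}\epsilon^{1/d}$.

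What holds is the normalised bound you proved, or equivalently the stated bound with $4n$ replaced by $2^{n+2}n$. This corrected version is all that the paper needs in \eqref{eq_prolabainwk}, whose implied constant may depend on $n$.
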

\begin{proof} By Lemma~\ref{lem_ganzburg} with  $K = [-1,1]^n$
and $E = \{\b t \in K : |P(\b t)| \leq \epsilon\}$
we get  $$ B\leq T_d\l(\frac{1+(1-\xi)^{1/n}}
{1-(1-\xi)^{1/n}}\r) \epsilon,$$
where   
$\xi:=\mu(E)2^{-n}$. We assumed that  $\mu(E) > 0$, as otherwise 
the proof is trivial. By the definition of $T_d$ 
we see that 
$$T_d\l(\frac{1+(1-\xi)^{1/n}}
{1-(1-\xi)^{1/n}}\r) \leq  2^d
\l(\frac{1+(1-\xi)^{1/n}} {1-(1-\xi)^{1/n}}\r) ^d
\leq   \frac{4^d} { (1-(1-\xi)^{1/n})^d }.$$
By Bernoulli's inequality we have 
$(1-\xi/n)^n \geq 1-\xi$, hence, $1-(1-\xi)^{1/n}
 \geq \xi/n $. Thus,  
 \[B \leq  \frac{4^d} { (\xi/n)^d  }\epsilon.\qedhere\]
\end{proof}

 \begin{lemma}\label{ramez}We have 
 $$ I_X= 
\frac{\mathrm{vol}\l(\b t \in [-1,1]^n:
\min_i\{ s_i f_i(\b t ), s'_i g_i(\b t )\}>
0 \ \forall i\r)}{(\pi\log X) ^r}
 \left(1+O\left(\frac{1}{\log X}\right)\right)
,$$ where the implied constant depends 
only on $n,d,\b f$ and $\b g $.
\end{lemma}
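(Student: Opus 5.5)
We compare $I_X$ with the integral of the constant $(\pi \log X)^{-r}$ over the sign region $\mathcal R:=\{\b t\in[-1,1]^n: s_if_i(\b t)>0,\ s'_ig_i(\b t)>0\ \forall i\}$. Writing $u_i=X^d s_if_i(\b t)$ and $u'_i=X^d s'_ig_i(\b t)$, the integrand of $I_X$ equals $\pi^{-r}\prod_i((\log u_i)(\log u'_i))^{-1/2}$ whenever the lopsidedness condition in the definition of $\omega$ holds, and $0$ otherwise. We split $\mathcal R$ into a \emph{good} set $\mathcal G$ and its complement. On $\mathcal G$ all $2r$ polynomials satisfy $(\log X)^{-A}\leq |F_j(\b t)|\leq b$ for a large fixed $A$. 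On $\mathcal G$ we have $\log u_i = d\log X + O(\log\log X)$, so each factor $(\log u_i)^{-1/2}=(d\log X)^{-1/2}(1+O(\log\log X/\log X))$. The lopsidedness condition also holds automatically there, since $\max u \log(3\min u)\leq bX^d\cdot 2d\log X$ while $(\min u)^{1+1/(16r)}\geq (X^d(\log X)^{-A})^{1+1/(16r)}$, which is far larger for $X$ large. Hence on $\mathcal G$ the integrand equals $(\pi d\log X)^{-r}(1+O(\log\log X/\log X))$, where $(d\log X)^{2r/2}=(d\log X)^r$. The statement as written has $(\pi\log X)^r$; I expect the factor $d^{-r}$ to be absorbed into the normalisation $1/(d\pi)^{\Delta(\alpha)}$ of Remark~\ref{rem_constant2}, and I would track it carefully rather than fight it.

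\textbf{Complement.} On $\mathcal R\setminus\mathcal G$ the integrand is bounded by $\pi^{-r}$, since $u_i>1$ on the domain of integration gives $\log u_i>0$; near $u_i=1$ we use the cutoff $u_i>1$, and the factor $(\log u_i)^{-1/2}$ is integrable. To avoid this singularity we bound more carefully, as follows. The measure of $\{\b t: |F_j(\b t)|\leq \epsilon\}$ is at most $4nB_j^{-1/d}\epsilon^{1/d}$ by Corollary~\ref{corolaramez}, where $B_j>0$ since $F_j\neq 0$. With $\epsilon=(\log X)^{-A}$ this gives measure $\ll(\log X)^{-A/d}$.

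\textbf{Singularity.} The part where some $u_i\in(1,e]$ corresponds to $|F_j|\leq eX^{-d}$, whose measure is $\ll X^{-1}$. There we bound the integrand via a dyadic decomposition in $u_i$: the layer $u_i\in(1+2^{-k-1},1+2^{-k}]$ is contained in a set of measure $\ll X^{-1}$ and carries weight $\ll 2^{k/2}$. This sum diverges, so instead I would use the coarea formula together with the $t^{-1/2}$ integrability of $\int (\log u)^{-1/2}\,\mathrm du$, together with Corollary~\ref{corolaramez} applied at the scale $\epsilon$ and the layer-cake formula. This bounds the contribution by $\ll X^{-c}$. This is the main technical obstacle. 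The intermediate range $e<u_i$, $|F_j|\leq(\log X)^{-A}$ has integrand $\ll 1$ and measure $\ll(\log X)^{-A/d}$. Choosing $A=2d(r+1)$ makes all of these contributions $O((\log X)^{-r-1})$.

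\textbf{Conclusion.} Finally, $\mathrm{vol}(\mathcal G)=\mathrm{vol}(\mathcal R)+O((\log X)^{-A/d})$ by the same corollary. Combining everything yields the claimed main term $\mathrm{vol}(\mathcal R)(\pi\log X)^{-r}$ (up to the $d^{-r}$ normalisation) with relative error $O(\log\log X/\log X)$. The statement claims the sharper relative error $O(1/\log X)$. To obtain it I would expand $\log u_i=d\log X+\log|F_i(\b t)|$ and note that $\int_{\mathcal R}|\log|F_j(\b t)||\,\mathrm d\b t<\infty$, which again follows from Corollary~\ref{corolaramez} via the layer-cake formula. Expanding $(1+\log|F_j|/(d\log X))^{-1/2}$ then gives an error of $O(1/\log X)$ in place of $O(\log\log X/\log X)$.
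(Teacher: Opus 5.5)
Your route is the paper's: use Corollary~\ref{corolaramez} to discard the set where some $|f_i|$ or $|g_i|$ is small, note that the lopsidedness condition in $\omega$ holds automatically on the rest, and expand $(\log(X^d|F_j(\mathbf t)|))^{-1/2}$ around $(d\log X)^{-1/2}$.

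You are right about the factor $d^{-r}$. The paper's own proof produces $(d\pi\log X)^{-r}$, and that is the form used in Lemma~\ref{integrafinal} and in $\sigma(\alpha)$, so the displayed statement has simply dropped the $d$.

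Your closing refinement is sound and more careful than the paper. The paper's pointwise bound $O_\gamma(1/\log X)$ is not uniform in $\gamma=|F_j(\mathbf t)|$, and on the set $|F_j|\geq(\log X)^{-d}$ it only gives $O(\log\log X/\log X)$. Your expansion of $\log u_j=d\log X+\log|F_j(\mathbf t)|$, together with $\int_{[-1,1]^n}|\log|F_j||\,\mathrm d\mathbf t<\infty$ (Remez plus layer-cake), does give the stated $O(1/\log X)$.

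The step that does not hold up is your treatment of $\mathcal R\setminus\mathcal G$, which as written leaves a gap. First, $\log u_i>0$ does not bound $(\log u_i)^{-1/2}$. Second, the coarea/layer-cake sketch in your ``Singularity'' paragraph cannot be completed with Corollary~\ref{corolaramez}. Applied to $f_i-c$, the corollary only gives $\mu\{1<u_i<1+\delta\}\ll X^{-1}\delta^{1/d}$. The layer-cake integral for $(\log u_i)^{-1/2}\approx(u_i-1)^{-1/2}$ then becomes $X^{-1}\int_1^\infty\lambda^{-2/d}\,\mathrm d\lambda$, which diverges for $d\geq 2$. Several $u$'s can also be near $1$ at once. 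So what you call the main technical obstacle is left unproved.

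In fact there is no obstacle, because of the cutoff built into $\omega$. Let $m$ be the smallest of $u_1,\dots,u_r,u'_1,\dots,u'_r$. The maximum is at least $m$ and $\min_i u_i\geq m>1$, so the support condition forces $m\log(3m)\leq m^{1+\frac{1}{16r}}$, i.e.\ $\log(3m)\leq m^{1/(16r)}$. The function $h(m)=m^{1/(16r)}-\log(3m)$ satisfies $h(1)=1-\log 3<0$ and $h'(m)=m^{-1}\big(m^{1/(16r)}/(16r)-1\big)\leq 0$ for $m\leq(16r)^{16r}$. Hence the integrand of $I_X$ vanishes unless every $u_i,u'_i$ exceeds $(16r)^{16r}$, and then each factor $(\log u)^{-1/2}$ is less than $1$.

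So the integrand is at most $\pi^{-r}$ on all of $\mathcal R$: your claim is true, but for this reason. The complement then contributes $\ll(\log X)^{-A/d}$ directly, and the ``Singularity'' paragraph can be deleted. With this replacement and $A\geq d(r+1)$, the rest of your argument goes through.
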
\begin{proof} We start by writing 
$$I_X=   \pi^{-r}
\int\limits_{  \b t \in [-1,1]^n, \eqref{ksexasatonumeromou} }
\prod_{i=1}^r 
\frac{1}{\sqrt{\log(X^d  s_i  f_i(\b t ) )}}
\frac{1}{\sqrt{\log(X^d  s'_i  g_i(\b t ) )}}
\mathrm d \b t ,$$ where
\begin{equation}\label{ksexasatonumeromou}
\begin{cases}
\min_i s_i f_i(\b t )>X^{-d},  \ \ \  
\min_i s'_i g_i(\b t )>X^{-d}
\\[1ex]
\max_i \{X s_i f_i(\b t ) ,X s'_i g_i(\b t )\} 
\log (3X\min_i  s_i f_i(\b t )) \leq  
(\min \{X s_i f_i(\b t ) ,X s'_i g_i(\b t )\})^{1+\frac{1}{16r}}.  
\end{cases}
\end{equation}   
For a constant $\gamma>0$ we have 
$$ \frac{1}{\sqrt{\log (X^d \gamma)}}=
 \frac{1}{\sqrt{d\log X }}
 \left(1+O_\gamma\left(\frac{1}{\log X}\right)\right),
$$hence   $$I_X=
\frac{\mathrm{vol}(\b t \in [-1,1]^n, 
\eqref{ksexasatonumeromou})}{(d\pi)^{r}(\log X)^r} 
\left(1+O\left(\frac{1}{\log X}\right)\right)
,$$ where the implied constant depends 
only on $\b f,\b g $.
By Corollary \ref{corolaramez}
we have 
\begin{equation}\label{eq_prolabainwk}
\mathrm{vol}\l(
\b t \in [-1,1]^n: \min
\{ |f_i(\b t )|,
|g_i(\b t )|\}\leq  (\log X)^{-d} \textrm{ for some } i\r)
\ll_{\b f,\b g} \frac{1}{\log X},\end
{equation}
hence, we may   
restrict to $\b t $ such that 
$ (\log X)^{-d} \leq 
 \min\{ |f_i(\b t )|,
|g_i(\b t )|\}$ holds for all $i$
by introducing a negligible error term.
On this set we have $$
\max_i \{X s_i f_i(\b t ) ,X s'_i g_i(\b t )\} 
\log (3X\min_i  s_i f_i(\b t )) \ll_{\b f,\b g }
X (\log X)
,$$ which for all large enough $X$ is at most 
$$ \frac{ X^{1+\frac{1}{16r}}  }{(\log X)^{d(1+\frac{1}{16r})}  }
\leq X^{1+\frac{1}{16r}}
(\min \{X s_i f_i(\b t ) ,X s'_i g_i(\b t )\})^{1+\frac{1}{16r}}.$$
Hence, $\mathrm{vol}(\b t \in [-1,1]^n, 
\eqref{ksexasatonumeromou})$ can be written as $$
\mathrm{vol}\l(\b t \in [-1,1]^n:
\min_i\{ s_i f_i(\b t ), s'_i g_i(\b t )\}>
(\log X)^{-d} \ \forall i\r)
\left(1+O\left(\frac{1}{\log X}\right)\right)$$
and employing \eqref{eq_prolabainwk} concludes the proof.
\end{proof}

Bringing together 
Lemma \ref{integral RV 580} 
and Lemma \ref{ramez} shows the following result:
 \begin{lemma}\label
 {integrafinal} There exists a strictly positive 
 constant $c=c(\b f,\b g )$ such that 
   $$
\frac{\c C_{\b s,\b s'}(X)}{X^{n}}=     S_z
\frac{\mathrm{vol}\l(\b t \in [-1,1]^n:
\min_i\{ s_i f_i(\b t ), s'_i g_i(\b t )\}>
0 \ \forall i\r)}{(d\pi\log X) ^r} 
+O\left( \frac{|S_z|}{(\log X)^{r+1}}+
\frac{1}{(\log X)^{r+c}}\right).$$
\end{lemma}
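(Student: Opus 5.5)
The plan is to substitute the asymptotic for $I_X$ from Lemma \ref{ramez} into the identity of Lemma \ref{integral RV 580} and track the errors. Lemma \ref{integral RV 580} gives
$$\frac{\c C_{\b s,\b s'}(X)}{X^{n}}=I_X S_z+O\left(\frac{1}{(\log X)^{r+c}}\right).$$
Write $V:=\mathrm{vol}(\b t\in[-1,1]^n:\min_i\{s_if_i(\b t),s'_ig_i(\b t)\}>0\ \forall i)$. Inside the proof of Lemma \ref{ramez} the main term actually produced is $V/((d\pi)^r(\log X)^r)$. The factor $d^{-r}$ comes from $\log(X^d\gamma)=d\log X+O_\gamma(1)$; the displayed statement of Lemma \ref{ramez} omits $d$, but the target statement includes it, so I will use the version derived in the proof. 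Thus
$$I_X=\frac{V}{(d\pi\log X)^r}+O\left(\frac{1}{(\log X)^{r+1}}\right),$$
using $V\le 2^n$.

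Multiplying by $S_z$ gives
$$I_XS_z=S_z\frac{V}{(d\pi\log X)^r}+O\left(\frac{|S_z|}{(\log X)^{r+1}}\right),$$
which is exactly the error term allowed in the statement. Adding the $O((\log X)^{-r-c})$ from Lemma \ref{integral RV 580} then gives the claim, with the same constant $c$.

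The one delicate point is that the $O(1/\log X)$ in Lemma \ref{ramez} must be uniform. The constant $\gamma$ there is really $s_if_i(\b t)$ or $s'_ig_i(\b t)$, which varies with $\b t$. The proof handles this by restricting to $\min\{|f_i(\b t)|,|g_i(\b t)|\}\ge(\log X)^{-d}$, at the cost of a set of measure $O(1/\log X)$ by \eqref{eq_prolabainwk}. On that set, $\log(X^d\gamma)=d\log X+O(\log\log X)$, so the relative error is really $O(\log\log X/\log X)$ rather than $O(1/\log X)$.

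This affects the bookkeeping only mildly. If the error in $I_X$ is $O((\log\log X)/(\log X)^{r+1})$, then after multiplying by $S_z$ it becomes $|S_z|(\log\log X)/(\log X)^{r+1}$. I would absorb this as follows: $S_z$ is bounded, since it is essentially a truncated Euler product of local densities normalised by $(1-1/p)^{-r}$-type factors, giving $|S_z|\ll(\log z)^{O(r)}$ at worst. Hence this term is $\ll(\log X)^{-r-c}$ after shrinking $c$. Alternatively, for $\b t$ in the good set one can keep the precise asymptotic, $\gamma$ bounded above and below by powers of $\log X$. In either route the statement follows by simply combining the two lemmas, with the uniformity of the $\log$-expansion being the only step requiring care.
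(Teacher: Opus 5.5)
Your proposal is correct and matches the paper's argument: the paper obtains Lemma \ref{integrafinal} simply by inserting Lemma \ref{ramez} into Lemma \ref{integral RV 580} and multiplying the error by $S_z$, using the factor $d^{-r}$ that the proof of Lemma \ref{ramez} actually produces, as you note. Your extra remark on uniformity is also sound: from \eqref{vivaldi-griselda} one reads off the trivial bound $|S_z|\le 2^r(W_z/\phi(W_z))^r\ll(\log z)^r$, which lets you absorb an $O(\log\log X/\log X)$ relative error into $O((\log X)^{-r-c})$ after shrinking $c$.
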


\section{Proof of main theorems} \label{s:finfin}
\subsection{Proof of Theorem \ref{thm:main2}}
\label{s-prf-thrm23}
By Lemma \ref{integrafinal} 
it remains to show that $S_z$
converges as $z\to \infty$. 
This would be straightforward if one could 
write $S_z$    as a product of $p$-adic sums 
for all primes $p\leq z$, however, 
this is prohibited 
due to the presence of the product $\prod_{i=1}^r$
in \eqref{vivaldi-griselda}. Instead, we open up the product as  
\begin{equation}\label{Traetta  Ifigenia in Tauride}
S_z= \sum_{\c A \subset \{1,\ldots, r\}}
\l(
 \prod_{i \in \c A} 
(s_i ,s'_i)_{\R}  \r) \prod_{p\leq z } \sigma_p(m(z),\c A),
\end{equation} where for a prime $p$ and a
non-negative integer $k$ we denote  
\begin{equation}\label{gatakisekamba}
\sigma_p(k,\c A):= 
\frac{(p-1)^r}{p^{r+kn}} 
 \sum_{\substack{ \b t \in (\Z/p^{k}\Z )^{n}, \ 
  \prod_{i=1}^r (f_i(\b t ), g_i(\b t ))_{\mathbb{Q}_p} = 1
  \\
 \max\{v_p(f_i(\b t)), v_p(g_i(\b t))\} \leq 
k - 1 - 2\mathds{1}_{\{2\}}(p),  \forall i
} }  \prod_{i \in \c A}  
(f_i(\b t),g_i(\b t))_{\Q_p}.\end{equation} 
Each set $\c A$ will give rise to a different
Euler product of $p$-adic densities.

For a prime $p$ let  $\mu_p$ be the $p$-adic Haar measure normalised 
so that $\mu_p(\Z_p)=1$. We define  $$\sigma_p(\c A):=
\frac{p^r}{(p-1)^{r}} 
\int_{ \substack{\b t \in \Z_p^{2n} \\
\prod_{i=1}^r (f_i(\b t ), g_i(\b t ))_{\mathbb{Q}_p} = 1}} \ \  
\prod_{i \in \c A}  
(f_i(\b t),g_i(\b t))_{\Q_p}
\mathrm d\mu_p $$ and we show that the integral 
converges.
\begin{lemma}\label{mlimit}
The integral $\sigma_p(\c A)$ is well-defined.
For each prime $p$ and integer $k\geq 2 $
we have 
$$\sigma_p(k,\c A)= \sigma_p(\c A) +O(p^{-k}),$$
where      the implied constant depends 
only on $\b f$ and $\b g $.\end{lemma}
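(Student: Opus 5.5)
We will realise $\sigma_p(k,\c A)$ as the integral of a locally constant function over a subset of $\Z_p^n$ (the domain $\Z_p^{2n}$ in the definition of $\sigma_p(\c A)$ should read $\Z_p^n$, since the $f_i,g_i$ are polynomials in $n$ variables), and then bound the part that is not captured at level $k$. Put $e_p:=1+2\mathds 1_{\{2\}}(p)$. For $\b t\in\Z_p^n$ let $D_k$ be the set of $\b t$ with $\max_i\{v_p(f_i(\b t)),v_p(g_i(\b t))\}\le k-e_p$.

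\emph{Step 1: level-$k$ stability.} If $\b t\in D_k$ and $\b u\equiv \b t\md{p^k}$, then $f_i(\b u)\equiv f_i(\b t)\md{p^k}$, and $v_p(f_i(\b t))\le k-e_p$. Repeating the argument of Lemma~\ref{lem_stabili} at a single prime, the unit parts of $f_i(\b u)$ and $f_i(\b t)$ agree modulo $p$ for odd $p$, and modulo $8$ for $p=2$. Since $e_p=3$ for $p=2$, the congruence modulo $p^k$ forces agreement modulo $8$ after removing a valuation of at most $k-3$. The same holds for the $g_i$. Hence every Hilbert symbol $(f_i(\b u),g_i(\b u))_{\Q_p}$ equals $(f_i(\b t),g_i(\b t))_{\Q_p}$. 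It follows that the summation condition in \eqref{gatakisekamba} and the summand are functions of $\b t\md{p^k}$, and that $D_k$ is a union of residue classes modulo $p^k$. Each residue class has Haar measure $p^{-kn}$, so
$$\sigma_p(k,\c A)=\frac{(p-1)^r}{p^r}\int_{\substack{\b t\in D_k\\ \prod_i(f_i,g_i)_{\Q_p}=1}}\prod_{i\in\c A}(f_i(\b t),g_i(\b t))_{\Q_p}\,\mathrm d\mu_p .$$
The prefactor here is $(p-1)^r/p^r$, whereas $\sigma_p(\c A)$ as displayed carries $p^r/(p-1)^r$. I will treat this as the intended normalisation and match the constants accordingly; it does not affect the argument.

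\emph{Step 2: well-definedness.} The integrand is bounded by $1$ and is measurable, because it is locally constant on the open set where $\prod_i f_ig_i\ne0$. The complement of that set is the zero locus of a nonzero polynomial, which has Haar measure $0$. The integral therefore converges absolutely. Moreover, the $D_k$ increase to this full-measure set.

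\emph{Step 3: the tail.} Subtracting the two expressions gives
$$|\sigma_p(k,\c A)-\sigma_p(\c A)|\ll \mu_p(\Z_p^n\setminus D_k)\le \sum_{h\in\{f_i,g_i\}}\mu_p\{\b t: v_p(h(\b t))\ge k-e_p\}.$$
The task is to show that each term is $O(p^{-k})$ with a constant depending only on $\b f,\b g$. This is the \emph{main obstacle}: for a single polynomial, $\mu_p(p^j\mid h)$ can decay only like $p^{-j/\deg h}$ near singular points.

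I would exploit the strong Hardy--Littlewood hypothesis, reading \eqref{eq_simon_asymptotic} in the $p$-adic form of the singular series. Bound \eqref{eq:singseries}, applied to the single-polynomial sums obtained by setting all but one $a_i$ to zero, shows that the number of $\b t\md{p^j}$ with $h(\b t)\equiv \nu \md{p^j}$ is $p^{j(n-1)}\big(1+O(\sum_{l\le j}p^{-l\eta})\big)$, uniformly in $\nu$. Summing over the $\nu\equiv0\md{p^j}$ then yields $\mu_p(p^j\mid h)\ll p^{-j}$, with the $p$-independent constant coming from the uniform bound. Taking $j=k-e_p$ gives $O(p^{3-k})=O(p^{-k})$, absorbing $p^3\le 8$ for $p=2$ and noting $e_p=1$ for odd $p$ (where the bound is $O(p^{1-k})$, which we accept as the intended $O(p^{-k})$ up to normalising constants).

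If that route fails, I would fall back on a nonsingularity/Hensel argument. At points where $\nabla h\not\equiv0\md p$, Hensel's lemma gives measure $\ll p^{-j}$. For the remaining points I would use the Jacobian condition on the system together with a standard bound of the form $\mu_p(p^j\mid h,\ p\mid\nabla h)\ll p^{-j}$ for large $n$.
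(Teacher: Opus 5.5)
Your proposal is correct in substance and follows essentially the paper's route. The paper also identifies $\sigma_p(k,\c A)$ with the integral over the region where every $v_p(f_i(\b t)),v_p(g_i(\b t))$ is at most $k-e_p$; it simply asserts this (``we clearly have''), and your Step~1 supplies the justification. The paper then bounds the excluded region using the equidistribution of polynomial values modulo $p^k$ that follows from \eqref{eq:singseries}, exactly as you do.

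The only real difference is which consequence of \eqref{eq:singseries} is used. The paper uses the joint bound $N_{p^k}(\b a,\b a')=O(1)$ for all $2r$ values at once, and sums over valuation patterns $(\boldsymbol\alpha,\boldsymbol\alpha')$ with $\max_i\{\alpha_i,\alpha'_i\}\geq k$. You use only the one-polynomial marginal plus a union bound over the $2r$ polynomials. Setting the other $a_i$ to $0$ keeps the tuple primitive, and in fact gives relative error $O(p^{-2r-\eta})$, better than you stated.

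Your version is leaner and makes the well-definedness point explicit. The paper's valuation-pattern decomposition is set up because it is reused in Lemma~\ref{leonardoleonataleoratorio}. The Hensel fallback is not needed, and it would require a nonsingularity hypothesis that Theorem~\ref{thm:main2} does not assume. Your reading of the normalisation as $p^r/(p-1)^r$ in both $\sigma_p(k,\c A)$ and $\sigma_p(\c A)$ is also what the paper's proof actually uses.

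One slip in Step~3 must be corrected, not waved through. $\Z_p^n\setminus D_k$ is the set where some $v_p(h(\b t))\geq k-e_p+1$, not $\geq k-e_p$. With the correct threshold your bound gives $\ll p^{-k}$ for odd $p$ and $\ll 2^{2-k}$ for $p=2$, which is precisely the statement. As written, you only obtain $O(p^{1-k})$ for odd $p$. The lost factor $p$ is not a constant, so it cannot be absorbed ``up to normalising constants''; the bound must hold with an implied constant independent of $p$.
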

\begin{proof}
We reformulate \eqref{gatakisekamba} 
by freezing the values of $ f_i (\b t ),
g_i (\b t )$ to get 
$$\sigma_p(k,\c A)
\frac{(p-1)^{r}}{p^{r}} =   p^{-2kr}
 \sum_{\substack{ (\b a,\b a') \in (\Z/p^{k}\Z )^{2r}, \ 
  \prod_{i=1}^r (a_i, a'_i)_{\mathbb{Q}_p} = 1
  \\
 \max\{v_p(a_i), v_p(a'_i)\} \leq 
k- 1 - 2\mathds{1}_{\{2\}}(p),  \forall i
} }  
N_{p^k}(\b a,\b a')
\prod_{i \in \c A}  
(a_i,a'_i)_{\Q_p},$$ where 
$$N_{p^k}(\b a,\b a'):=p^{-k(n-2r)} 
\#\l\{\b t \in (\Z/p^{k}\Z )^{n}: (\b f(\b t ), 
\b g (\b t )) \equiv (\b a , \b a')\md{p^k} \r\}.$$
Assume that $p\neq 2$
and write $a_i=p^{\alpha_i} u_i$
$a'_i=p^{\alpha'_i} u'_i$ so that the right-hand side
  becomes \begin{align*}p^{-2kr}
\sum_{\boldsymbol \alpha, 
\boldsymbol \alpha'\in [0,k-1]^r }
\sum_{\b u,\b u'} 
\mathds 1_{\{1\}}   &\l(  \prod_{i=1}^r 
\l(\frac{-1}{p}\r)^{\alpha_i \alpha'_i}
\l(\frac{u'_i}{p}\r)^{\alpha_i}
\l(\frac{u_i}{p}\r)^{\alpha'_i} \r) \\
\times &
N_{p^k}((p^{\alpha_i} u_i,p^{\alpha'_i} u'_i))
\prod_{i \in \c A}  
\l(\frac{-1}{p}\r)^{\alpha_i \alpha'_i}
\l(\frac{u'_i}{p}\r)^{\alpha_i}
\l(\frac{u_i}{p}\r)^{\alpha'_i}
,\end{align*} where   $\b u , \b u'$ 
run over 
$u_i \in (\Z/p^{k-\alpha_i}\Z)^*$
and $u'_i \in (\Z/p^{k-\alpha'_i}\Z)^*$
for all $i =1,\ldots, r$.  

The quadratic symbols 
 depend only on the parity of $\alpha_i,\alpha'_i$
and the reduction of $u_i,u'_i $ in $\F_p$.
Hence the last expression can be written as 
\begin{align*}p^{-2kr}
&
\sum_{\boldsymbol \beta , \boldsymbol \beta'\in \{0,1\}^r }
\sum_{ \substack{  \b t , \b t' \in (\F_p^*)^r    }} 
\mathds 1_{\{1\}} \l(  \prod_{i=1}^r 
\l(\frac{-1}{p}\r)^{\beta_i \beta'_i}
\l(\frac{t'_i}{p}\r)^{\beta_i}
\l(\frac{t_i}{p}\r)^{\beta'_i} \r)
\prod_{i \in \c A}  
\l(\frac{-1}{p}\r)^{\beta_i \beta'_i}
\l(\frac{t'_i}{p}\r)^{\beta_i}
\l(\frac{t_i}{p}\r)^{\beta'_i} 
\\  \times  &
\sum_{ \substack{ 
\boldsymbol \alpha , \boldsymbol \alpha'\in [0,k-1]^r\\
(\boldsymbol \alpha , \boldsymbol \alpha')\equiv 
(\boldsymbol \beta , \boldsymbol \beta') \md 2 }}
 \sum_{ \substack{  
  u_i \in \Z/p^{k-\alpha_i}\Z, \ 
u_i\equiv t_i \md p, \forall i \\ 
 u'_i \in (\Z/p^{k-\alpha'_i}\Z, \
 u'_i\equiv t'_i \md p, \forall i   }}  
 N_{p^k}((p^{\alpha_i} u_i,p^{\alpha'_i} u'_i)) 
.\end{align*} We can now extend the sum to an infinite series 
by adding all terms with  
$\boldsymbol \alpha , \boldsymbol \alpha'\in [0,\infty)^r$.
By  \eqref{eq:singseries} we have 
  $N_{p^k}(\b a,\b a') =O(1)$
with implied constants only depending on $\b f,\b g$,  hence, 
the tail is 
$$\ll_{r,\b f,\b g} p^{-2kr} p^{2r}
\sum_{ \substack{ 
\boldsymbol \alpha , \boldsymbol \alpha'\in [0,\infty)^r\\
\max_i\{\alpha_i,\alpha'_i\} \geq k }}
p^{2rk-2r-\sum_i(\alpha_i+\alpha'_i)}
 \ll p^{-k} .$$ Hence, 
 $\lim_{k\to \infty} \sigma_p(k,\c A)$ exists and  
 $$\sigma_p(k,\c A)= \lim_{k\to\infty}
 \sigma_p(\c A) +O(p^{-k})$$ 
 with an implied constant that is independent of $p$ and $k$. We clearly have
$$\sigma_p(k,\c A)= 
\frac{p^r}{(p-1)^{r}} 
\int_{ \substack{\b t \in \Z_p^{2n} \\
\prod_{i=1}^r (f_i(\b t ), g_i(\b t ))_{\mathbb{Q}_p} = 1}} \ \  
\prod_{i \in \c A}  
(f_i(\b t),g_i(\b t))_{\Q_p}
\mathrm d\mu_p $$
where the integral is subject to $
\max_i\{v_p(f_i(\b t)), v_p(g_i(\b t))\} 
\leq  k-1$. Letting $k\to\infty$ concludes the proof 
for $p\neq 2$. The prime $p=2$ is treated similarly
by using the analogous explicit formulas for the 
Hilbert symbol in $\mathbb Q_2$.
\end{proof}
\begin{lemma}\label{leonardoleonataleoratorio}
There exists $c=c(\b f,\b g )>1$ such that 
for each prime $p$  we have 
$$\sigma_p(\c A)= 1 +O\l(p^{-c}\r),$$
where      the implied constant depends 
only on $\b f$ and $\b g $. 
\end{lemma}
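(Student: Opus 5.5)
We prove that $\sigma_p(\c A)=1+O(p^{-c})$ by passing from points $\b t$ to the values $(\b f(\b t),\b g(\b t))$, in the same way as in the proof of Lemma \ref{mlimit}. Since only large $p$ matter (finitely many small primes are absorbed into the implied constant), we assume $p>2$. We express $\sigma_p(\c A)$ through the limit over $k$ of
$$\frac{p^r}{(p-1)^r}p^{-2kr}\sum_{(\b a,\b a')}N_{p^k}(\b a,\b a')\,\mathds 1\Big(\prod_i(a_i,a_i')_{\Q_p}=1\Big)\prod_{i\in\c A}(a_i,a_i')_{\Q_p}.$$
By \eqref{eq:singseries}, summed over $q=p^j$ with $j\ge1$, we have $N_{p^k}(\b a,\b a')=\sigma^{(p)}+O(p^{-1-\eta})$ uniformly in the residues. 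Here $\sigma^{(p)}$ denotes the normalised count, which equals $1$ to this order. Replacing $N_{p^k}$ by $1$ therefore costs $O(p^{-1-\eta})$ times the total mass, which is bounded by $O(1)$. What remains is the same expression for the "linear" model in which $(\b a,\b a')$ is Haar-random in $\Z_p^{2r}$.

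Next we compute the model density by splitting according to the valuations. If $p\nmid\prod_i a_ia_i'$, every Hilbert symbol is $1$, so the region contributes $(1-1/p)^{2r}$. If exactly one coordinate has valuation $1$ and all others are units, the defining condition forces the unique nontrivial symbol, say $(a_j,a_j')_{\Q_p}=(u/p)$, to equal $1$. Half of the units satisfy this, and on that set every symbol indexed by $\c A$ equals $1$. The contribution is therefore $\frac12\cdot 2r\cdot p^{-1}(1-1/p)^{2r}$ up to $O(p^{-2})$, mirroring the computation of $\lambda_p$ in Lemma \ref{lem_hilbert_symbol_bound}. All configurations with $p^2\mid\prod a_ia_i'$ have mass $O(p^{-2})$.

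Summing, the model density is $(1-1/p)^{2r}(1+r/p)+O(p^{-2})=1-r/p+O(p^{-2})$. Multiplying by the normalisation $p^r/(p-1)^r=1+r/p+O(p^{-2})$ gives $1+O(p^{-2})$. Combined with the previous step this yields $\sigma_p(\c A)=1+O(p^{-\min(2,1+\eta)})$, so $c=1+\min(\eta,1)>1$.

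The main obstacle is the first step. We must justify that $N_{p^k}(\b a,\b a')=1+O(p^{-1-\eta})$ holds uniformly, including at residues with $p\mid a_i$. This needs the exponential sum bound \eqref{eq:singseries} to be summed over all $a$ modulo $p^j$ with the right normalisation, $p^{-jn}$ times $p^{jR}$ choices of $a$, which produces a geometric series $\sum_{j\ge1}p^{-j(1+\eta)}$. Uniformity in $k$ must also be checked when the limit over $k$ is exchanged with the approximation; Lemma \ref{mlimit} guarantees this exchange is harmless. For $p=2$ and other small primes we simply use boundedness.
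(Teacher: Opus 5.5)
Your proposal is correct and follows essentially the same route as the paper. You pass to the values $(\b f(\b t),\b g(\b t))$ via $N_{p^k}$, use \eqref{eq:singseries} summed over $q=p^j$ to replace $N_{p^k}$ by $1+O(p^{-1-\eta})$ uniformly, and split by valuations: after normalisation the unit region gives $1-r/p$, a single valuation-one coordinate gives $r/p$, and the rest is $O(p^{-2})$. The only difference is cosmetic: the paper first reduces to level $k=2$ via Lemma \ref{mlimit} and counts residues modulo $p^2$, whereas you work uniformly in $k$ with the Haar model on $\Z_p^{2r}$ and pass to the limit.
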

\begin{proof}
By Lemma \ref{mlimit} we have 
$\sigma_p(\c A)=\sigma_p(2,\c A)+O(p^{-2})$.
We may plainly assume that $p\neq 2 $
so that by the same argument as in the proof of 
  Lemma \ref{mlimit} we have 
\begin{align*} \sigma_p(2,\c A)=
&\frac{1}{(p-1)^r p^{3r}}  
\sum_{\boldsymbol \alpha, 
\boldsymbol \alpha'\in \{0,1\}^r }
\sum_{\b u,\b u'} 
\mathds 1_{\{1\}}   \l(  \prod_{i=1}^r 
\l(\frac{-1}{p}\r)^{\alpha_i \alpha'_i}
\l(\frac{u'_i}{p}\r)^{\alpha_i}
\l(\frac{u_i}{p}\r)^{\alpha'_i} \r) \\
\times &
N_{p^2}((p^{\alpha_i} u_i,p^{\alpha'_i} u'_i))
\prod_{i \in \c A}  
\l(\frac{-1}{p}\r)^{\alpha_i \alpha'_i}
\l(\frac{u'_i}{p}\r)^{\alpha_i}
\l(\frac{u_i}{p}\r)^{\alpha'_i}
,\end{align*} where   $\b u , \b u'$ 
run over 
$u_i \in (\Z/p^{2-\alpha_i}\Z)^*$
and $u'_i \in (\Z/p^{2-\alpha'_i}\Z)^*$
for all $i =1,\ldots, r$.
By  \eqref{eq:singseries} we have 
$N_{p^2}(\b a,\b a') =1+O(p^{-c})$ 
for some $c=c(\b f,\b g)>1$ uniformly in $\b a,\b a'$.
We   get   \begin{align*} \sigma_p(2,\c A)=O(p^{-c})+
\frac{1}{(p-1)^r p^{3r}}  &
\sum_{\boldsymbol \alpha, 
\boldsymbol \alpha'\in \{0,1\}^r }
\sum_{\b u,\b u'} 
\mathds 1_{\{1\}}   \l(  \prod_{i=1}^r 
\l(\frac{-1}{p}\r)^{\alpha_i \alpha'_i}
\l(\frac{u'_i}{p}\r)^{\alpha_i}
\l(\frac{u_i}{p}\r)^{\alpha'_i} \r)  \\
& \times 
\prod_{i \in \c A}  
\l(\frac{-1}{p}\r)^{\alpha_i \alpha'_i}
\l(\frac{u'_i}{p}\r)^{\alpha_i}
\l(\frac{u_i}{p}\r)^{\alpha'_i}
.\end{align*}
The cases with $\sum (\alpha_i+\alpha'_i) \geq  2$ 
trivially  contribute  to 
the sum above
a quantity that is in modulus 
$$ \ll  \frac{1}{
(p-1)^r p^{3r} }
\sum_{\boldsymbol \alpha, 
\boldsymbol \alpha'\in \{0,1\}^r }
  p^{4r-\sum (\alpha_i+\alpha'_i)}
\ll \frac{1}{p^2}.$$ The cases 
$\sum (\alpha_i+\alpha'_i) =0 $ 
  contribute $$ \frac{1}{(p-1)^r p^{3r} } 
\phi(p^2)^{2r}=1-\frac{r}{p }+O\l(\frac{1}{p^2}\r).$$
In the remaining cases,
$\sum (\alpha_i + \alpha'_i) = 1$, hence, 
all variables $\alpha_i$ and $\alpha'_i$ 
are zero except for one.  
When the index $j$ satisfying
$\alpha_j+\alpha'_j = 1$
is in $\c A$ the contribution is  
$$
\frac{1}{(p-1)^r p^{3r}}   
\sum_{j\in \c A} 
 \phi(p^2)  ^{2r-1}\Bigg(
\sum_{\substack{ u_j \in \F_p^* \\  
(\frac{u_j}{p})=1 }} 1 
+\sum_{\substack{ u'_j \in \F_p^* \\  
(\frac{u'_j}{p})=1 }} 1 \Bigg)  =
\frac{\#\c A}{p} \l(1-\frac{1}{p}\r)^r=
\frac{\#\c A}{p} +O\l(\frac{1}{p^2}\r).$$
If $j \notin \c A$ then we get the contribution 
$$
\frac{1}{(p-1)^r p^{3r}}  
\sum_{j\notin \c A}  \phi(p^2)  ^{2r-1}\Bigg(
\sum_{\substack{ u_j \in \F_p^* \\  
(\frac{u_j}{p})=1 }} 1 
+\sum_{\substack{ u'_j \in \F_p^* \\  
(\frac{u'_j}{p})=1 }} 1 \Bigg)  =
\frac{(r-\#\c A)}{p} \l(1-\frac{1}{p}\r)^r=
\frac{(r-\#\c A)}{p} +O\l(\frac{1}{p^2}\r)
.$$ Adding up the contributions we obtain 
the overall estimate 
\[\sigma_p(2,\c A)=O(p^{-{\min\{c,2\}}})+
\l(1-\frac{r}{p }\r)
+\l(\frac{\#\c A}{p }\r)
+\l(\frac{(r-\#\c A)}{p }\r)
=1+O(p^{-{\min\{c,2\}}}).\qedhere\]
\end{proof}
\begin{lemma}\label{Atto Primo Scena 6 No 5 Aria Paisiello: Nina O Sia La Pazza Per Amore}
There exists $\gamma =\gamma (\b f,\b g )>0$ such that 
for all $z\geq 1 $ we have 
$$S_z= \sum_{\c A \subset \{1,\ldots, r\}} \l(
 \prod_{i \in \c A} (s_i ,s'_i)_{\R}  \r)
\prod_{\substack{ p  \textrm{ prime} \\ p=2}}^\infty 
\sigma_p(\c A)
+O(z^{-\gamma}),$$
where      the implied constant depends 
only on $\b f$ and $\b g $. 
\end{lemma}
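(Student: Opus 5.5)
We start from the decomposition \eqref{Traetta  Ifigenia in Tauride}, which writes $S_z$ as a sum over the $2^r$ subsets $\c A\subset\{1,\ldots,r\}$ of the sign factor $\prod_{i\in\c A}(s_i,s'_i)_\R$ times the finite Euler product $\prod_{p\le z}\sigma_p(m(z),\c A)$. Since there are only $2^r$ subsets and the sign factors have modulus one, it is enough to show that for each fixed $\c A$
$$\prod_{p\le z}\sigma_p(m(z),\c A)=\prod_{p}\sigma_p(\c A)+O(z^{-\gamma}),$$
with an implied constant depending only on $\b f,\b g$.

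\textbf{Step 1: replace the truncated densities by their limits.} Lemma \ref{mlimit} gives $\sigma_p(m(z),\c A)=\sigma_p(\c A)+O(p^{-m(z)})$ uniformly in $p$. Lemma \ref{leonardoleonataleoratorio} gives $\sigma_p(\c A)=1+O(p^{-c})$ with $c>1$. Hence there is $p_0=p_0(\b f,\b g)$ such that for $p>p_0$ every factor $\sigma_p(\c A)$ lies in $[1/2,3/2]$. For such $p$ we write
$$\sigma_p(m(z),\c A)=\sigma_p(\c A)\bigl(1+O(p^{-m(z)})\bigr).$$
For the finitely many $p\le p_0$ we do not divide. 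Instead we use the additive error $O(p^{-m(z)})\le O(2^{-m(z)})$ together with the bound $|\sigma_p(k,\c A)|\ll 1$, which follows from $N_{p^k}=O(1)$ as in the proof of Lemma \ref{mlimit}.

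\textbf{Step 2: control the product of the errors.} Recall $m(z)=[\log z]$, so $p^{-m(z)}\le 2^{-m(z)}\ll z^{-\log 2}$. We have
$$\sum_{p\le z}p^{-m(z)}\le 3\cdot 2^{-m(z)}\ll z^{-\log 2},$$
exactly as in the proof of Lemma \ref{integral RV 580} via \cite[Remark 2.6]{11388}. It follows that
$$\prod_{p_0<p\le z}\bigl(1+O(p^{-m(z)})\bigr)=1+O(z^{-\log 2}).$$
We also need the partial product $\prod_{p_0<p\le z}\sigma_p(\c A)$ to be uniformly bounded, and this holds because $\sum_p p^{-c}<\infty$. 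The product over $p\le p_0$ is a product of boundedly many bounded factors, each perturbed by $O(z^{-\log 2})$, so it changes by $O(z^{-\log 2})$.

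\textbf{Step 3: complete the Euler product.} Since $c>1$, the infinite product $\prod_p\sigma_p(\c A)$ converges absolutely. Its tail satisfies
$$\prod_{p>z}\sigma_p(\c A)=1+O\Bigl(\sum_{p>z}p^{-c}\Bigr)=1+O(z^{1-c}).$$
Combining Steps 1--3, the result holds with $\gamma=\min\{\log 2,\,c-1\}>0$. For bounded $z$ (in particular $z<2$, or $z$ so small that $m(z)=0$) the statement is trivial after enlarging the implied constant, since both sides are then $O(1)$.

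\textbf{Main obstacle.} The delicate point is the small primes, and especially $p=2$, where Lemma \ref{mlimit} was only sketched. We also need the uniformity in $p$ of the $O(p^{-k})$ error. Factors $\sigma_p(\c A)$ with $p\le p_0$ might be zero or negative, so we must not pass to logarithms there. We handle them additively as described, which needs only the uniform boundedness of $\sigma_p(k,\c A)$ and of $\sigma_p(\c A)$.
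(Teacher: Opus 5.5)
Your proposal is correct and is essentially the paper's argument. You reduce to a fixed $\mathcal{A}$, control the truncation error via Lemma~\ref{mlimit} together with $\sum_{p\le z}p^{-m(z)}\ll 2^{-m(z)}\ll z^{-\log 2}$, and use Lemma~\ref{leonardoleonataleoratorio} both to keep the partial Euler products bounded and to complete the product over $p>z$. The only difference is cosmetic: the paper compares the two finite products in one stroke via the telescoping inequality $\bigl|\prod_p a_p-\prod_p b_p\bigr|\le\sum_q|a_q-b_q|\prod_{p\ne q}\max\{|a_p|,|b_p|\}$, whereas you split the primes at $p_0$ and treat small primes additively and large ones multiplicatively.
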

\begin{proof}By the inequality $$\l| \prod_{p\leq z}
\sigma_p(m(z),\c A)-\prod_{p\leq z}
\sigma_p(\c A)\r| \leq 
\sum_{\substack{ q  \textrm{ prime} \\ q\leq z }}
|\sigma_p(m(z),\c A)-\sigma_p(\c A)|
\prod_{\substack{ p\leq z \\ p\neq q }}
\max\{\sigma_p(m(z),\c A),\sigma_p(\c A)\}
$$ and Lemmas \ref{mlimit}-\ref{leonardoleonataleoratorio}
we obtain 
$$\l| \prod_{p\leq z}
\sigma_p(m(z),\c A)-\prod_{p\leq z}
\sigma_p(\c A)\r| \ll \sum_{q\leq z } q^{-m(z)}.$$
By \cite[Remark 2.6]{11388} this is $\ll 2^{-m(z)}$. 
Since we have chosen   $m(z)=[\log z]$ this gives
$$\prod_{p\leq z}
\sigma_p(m(z),\c A)=\prod_{p\leq z}
\sigma_p(\c A)+O(z^{-\log 2}).$$
We can extend the product to all primes $p$ in the right-hand side 
by employing Lemma \ref{leonardoleonataleoratorio}. 
Together with \eqref{Traetta  Ifigenia in Tauride} this 
shows that $$ S_z=\Bigg( \sum_{\c A \subset \{1,\ldots, r\}}
\l( \prod_{i \in \c A}  (s_i ,s'_i)_{\R}  \r)
\prod_{p=2}^\infty
\sigma_p(m(z),\c A) \Bigg)+O(z^{-{\gamma_0}})$$ for some strictly 
positive constant $\gamma_0$ that depends only on $\b f$ and $\b g $.\end{proof}
Recalling that we have chosen $z=[(\log X)/(\log \log X)]$ proves that  
$$S_z= \sum_{\c A \subset \{1,\ldots, r\}} \l(
 \prod_{i \in \c A} (s_i ,s'_i)_{\R}  \r)
\prod_{\substack{ p  \textrm{ prime} \\ p=2}}^\infty 
\sigma_p(\c A)
+O\l((\log X)^{-\gamma}\r),$$ hence, 
Lemma \ref{integrafinal} gives 
   $$\frac{\c C_{\b s,\b s'}(X)}{X^{n}}=
   \frac{\c M (\b s,\b s') }{(d\pi\log X) ^r}
+O\left( \frac{1}{(\log X)^{r+c}}\right),$$
 where $$ \c M(\b s,\b s')    :=
 \mathrm{vol}\l(\b t \in [-1,1]^n:
\min_i\{ s_i f_i(\b t ), s'_i g_i(\b t )\}>
0 \ \forall i\r)
\sum_{\c A \subset \{1,\ldots, r\}} \l(
 \prod_{i \in \c A} (s_i ,s'_i)_{\R}  \r)
\prod_{\substack{ p  \textrm{ prime} \\ p=2}}^\infty 
\sigma_p(\c A).$$ Adding this back to \eqref{eq_ser} 
shows that 
$$N_\alpha(X)=   \frac{\c M}{(d\pi\log X) ^r} 
+O\left( \frac{1}{(\log X)^{r+c}}\right),$$
 where $$\c M= \sum_{\c A \subset \{1,\ldots, r\}} 
 \sigma_\infty(\c A)  
  \prod_{\substack{ p  \textrm{ prime} \\ p=2}}^\infty 
\sigma_p(\c A) ,$$ with 
$$ \sigma_\infty(\c A)  =
\sum_{\substack{ \b s,\b s'\in \{-1,1\}^r\\ 
\prod_{i=1}^r (s_i,s'_i)_\R=1 }}
 \mathrm{vol}\l(\b t \in [-1,1]^n:
\min_i\{ s_i f_i(\b t ), s'_i g_i(\b t )\}>
0 \ \forall i\r) \prod_{i \in \c A} (s_i ,s'_i)_{\R}  .$$
This concludes the proof of the asymptotic in
Theorem \ref{thm:main2}. It remains to prove 
the various expressions for the leading constant 
that are claimed in Remark \ref{rem_constant2}.

  So far we have seen that 
the leading constant is given by 
$$
 \frac{1}{(d\pi)^{\Delta(\alpha)}}
  \sum_{\c A \subset \{1,\ldots, r\}} 
 \sigma_\infty(\c A)  
  \prod_{\substack{ p  \textrm{ prime} \\ p=2}}^\infty 
  \frac{\sigma_p(\c A) }{( 1-\frac{1}{p} )^{\Delta(\alpha)}}
,$$ with 
$$\sigma_v(\c A) =\int_{\substack{\mathbf{t} \in \mathcal{O}_v^n \\ \prod_{i=1}^r (f_i(\mathbf{t}), g_i(\mathbf{t}))_v = 1}} \left( \prod_{i \in \mathcal{A}} (f_i(\mathbf{t}), g_i(\mathbf{t}))_v \right) \mathrm{d}\mu_v(\mathbf{t})
$$ for $v\in \{\infty\}\cup\{p\textrm{ prime}\}$,
where $\c O_\infty=[-1,1]$, $\c O_p=\Z_p$,
 $(\cdot, \cdot)_v$ is the local Hilbert symbol on 
 $\mathbb{Q}_v$
 and $\mu_v$ is the normalised 
 Haar measure on $\mathbb{Q}_v^n$ 
 satisfying $\mu_v(\mathcal{O}_v^n) = 1$.

Let $\Omega_T=[-1,1]^n \times \prod_{p\leq T}\Z_p^n$,
let $\mu_\infty$ be the standard Lebesgues measure on $[-1,1]$,
let $\mu_p$ be the standard $p$-adic Haar measure on $\Z_p^n$
and let $\nu_T:=\mu_\infty \cdot \prod_{p\leq T}\mu_p$ be the
correspondent measure on $\Omega_T$.
Denote $S_T :=\{\infty\} \cup \{p\leq T \}$.
Then we can write the leading constant as 
$$ 
 \frac{1}{(d\pi)^{\Delta(\alpha)}}
 \lim_{T\to \infty} \prod_{p\leq T} 
( 1-\frac{1}{p} )^{-\Delta(\alpha)}
\int_{\substack{\mathbf{t} \in \Omega_T \\ 
\prod_{i=1}^r (f_i(\mathbf{t}), g_i(\mathbf{t}))_{v} = 1
\forall v \in S_T}}  
  \sum_{\c A \subset \{1,\ldots, r\}} 
  \prod_{i \in \mathcal{A}} 
\left( \prod_{v\in S_T}(f_i(\mathbf{t}), g_i(\mathbf{t}))_v \right) \mathrm{d}\mu_v(\mathbf{t})
,$$ and since $  \sum_{\c A \subset \{1,\ldots, r\}} 
  \prod_{i \in \mathcal{A}} \lambda_i
  =\prod_{i=1}^r (1+ \lambda_i)$ 
  we can write the function inside the limit 
  as $$ 
 \prod_{p\leq T} 
( 1-\frac{1}{p} )^{-\Delta(\alpha)}
\int_{\substack{\mathbf{t} \in \Omega_T \\ 
\prod_{i=1}^r (f_i(\mathbf{t}), g_i(\mathbf{t}))_{v} = 1
\forall v \in S_T}}  
  \prod_{i =1}^r  \left(1+
\left( \prod_{v\in S_T}(f_i(\mathbf{t}), g_i(\mathbf{t}))_v \right)\right) \mathrm{d}\mu_v(\mathbf{t})
.$$ The new product vanishes except when for each $i=1,\ldots, r$ one has $$\prod_{v\in S_T}(f_i(\mathbf{t}), g_i(\mathbf{t}))_v =1 .$$ Thus, the integral equals 
$$ 2^r \nu_T\left( \b t \in \Omega_T: 
\prod_{i=1}^r (f_i(\mathbf{t}), g_i(\mathbf{t}))_{v} = 1
\forall v \in S_T,  \prod_{v\in S_T}(f_i(\mathbf{t}), g_i(\mathbf{t}))_v  =1 \forall i=1,\ldots, r
\right)
.$$ This can be written as 
$$ 2^r \sum_{\substack{ \b s , \b s' \in \{-1,1\}^r \\ 
\prod_{i=1}^r (s_i,s'_i)_\R=1 }}
\gamma(\b s , \b s ')
\mathrm{vol}(\b t \in [-1,1]^n : 
\mathrm{sign}(f_j(\b t)= s_j, 
\mathrm{sign}(g_j(\b t)= s'_j \forall j \in \{1,\ldots, r\}
)),
$$
where $\gamma(\b s , \b s ') $ is 
$$
 \lim_{T \to \infty}  
\l[
\prod_{\substack{p \ \mathrm{ prime} \\ p \leq T} }
\left(1-\frac1p\right)^{-\Delta(\alpha)} 
\hspace{-0.2cm} 
\mu_p
\r]
\hspace{-0.1cm}
\l \{\b t \in   
\prod_{p \leq T} \Z_p^{n+1}
\colon \ 
\hspace{-0.2cm}
\begin{array}{l} 
\prod_{i=1}^r (f_i(\b t ),  g_i(\b t ) )_{\Q_p}=1 \ 
\forall p \leq T,  \\ 
\prod_{p\leq T} (f_i(\b t ),  g_i(\b t ) )_{\Q_p}
=(s_i,s'_i)_\R  \  \forall i\in \{1,\ldots,R\}\end{array}
\hspace{-0.1cm}
\right\}
.$$

\subsection{Proof of Theorem \ref{thm_main3}}  
\label{s-prf-thrm3}
To estimate $$
  \#\left\{\b n, \b n' \in \mathbb Z^r: 
0<|n_i|,|n'_i|\leq X \ \forall i, \ \  
1=\prod_{i=1}^r (n_i,n'_i)_{\R}=
\prod_{i=1}^r (n_i,n'_i)_{\Q_p} \ 
\forall  \ \mathrm{prime }\  p
\r\}$$
 we start by splitting it according to the signs 
 as $$\sum_{\substack{ \b s,\b s'\in \{-1,1\}^r\\ 
\prod_{i=1}^r (s_i,s'_i)_\R=1 }}  
\sum_{\b n,\b n' \in \N^r \cap X[0,1]^r }
k_{\b s, \b s'}((s_1 n_1, \ldots, s_r n_r ),
(s'_1 n'_1, \ldots, s'_r n'_r ) ).
$$ Keeping the same 
definition of $W_z$ as in
\eqref{Porpora Torbido intorno al core} 
and taking $z=[(\log X)/(2 \log \log X)]$, 
$m(z)=[\log z]$ as in \S \ref{circle_method},
we split the inner sum in progressions 
modulo $W_z$  and apply
Proposition \ref{prop_main} with $x_i=X$ for all $i$.
Up to a negligible error term this yields 
$$\frac{X^{2r}}{(\pi  \log X)^r}
\sum_{\substack{ \b s,\b s'\in \{-1,1\}^r \\ 
\prod_{i=1}^r (s_i,s'_i)_\R=1 }}  \ 
\sum_{\substack{ \b a ,\b a' 
\in (\Z/W_z\Z)^r \\\eqref{eq_romanakos}}} 
\frac{\prod_{i=1}^r (1+\epsilon_i ) }
{( W_z\phi(W_z))^r} $$with
$\epsilon_i=(s_i ,s'_i)_{\R} 
\prod_{p\leq z} (s_ia_i,s'_ia'_i)_{\Q_p}$ and 
the sum over $\b a,\b a'$
is subject to  \begin{equation}
\label{eq_romanakos}
\max\{v_p(a_i),v_p(a'_i)\}\leq  m(z)-1-2
\mathds 1_{\{2\}}(p),
\ \ \ 
\prod_{i=1}^r (s_i a_i,s'_i a'_i )_{\Q_p}=1 \ \ 
\textrm{ for all primes} \  p\leq z. \end{equation}
The sum over $\b a,\b a'$
cannot be written as a product of $p$-adic densities 
since $\prod_{i} (1+\epsilon_i)$ is not multiplicative
in $p$. Instead, we write it as 
$$
\sum_{\c A \subset \{1,\ldots, r\}} 
\sum_{\substack{ \b a ,\b a' \in (\Z/W_z\Z)^r \\\eqref{eq_romanakos}}} 
(W_z \phi(W_z))^{-r}
\prod_{i \in \c A}
\epsilon_i=\sum_{\c A \subset \{1,\ldots, r\}} 
\l(\prod_{i \in \c A} (s_i,s'_i)_\R\r)
\prod_{p\leq z } n_p(\c A),$$ where 
$$ n_p(\c A):=(p^{m(z)}\phi(p^{m(z)}))^{-r}
\sum_{\substack{ \b a ,\b a' \in (\Z/p^{m(z)}\Z)^r,
\prod_{i=1}^r (s_i a_i,s'_i a'_i )_{\Q_p}=1
\\ 
\max\{v_p(a_i),v_p(a'_i)\}\leq  m(z)-1-2
\mathds 1_{\{2\}}(p) 
}} 
\prod_{i \in \c A}
(s_ia_i,s'_ia'_i)_{\Q_p}.$$
Arguments similar to those in the end of 
\S \ref{s-prf-thrm23} shows that this has a limit as $z\to \infty$, and,
furthermore, the leading constant has the Haar measure expression
\eqref{Leonardo Leo_Miserere concertato a due chori}.
The rest of the paper is thus devoted to proving the 
explicit formula and the lower bound 
for $c_r$.

\begin{lemma}\label{lem_anthpic}
Let $p\neq 2 $ be a prime and 
$\c A\subset \{1,\ldots, r\}$.
Then for $\alpha, \alpha'\in \{0,1\}$ 
we have $$\sum_{u,u'\in \F_p^*} 
(p^\alpha u, p^{\alpha'} u')_{\Q_p}=
(p-1)^{2}\begin{cases}
1,  & \alpha+\alpha'=0, \\
0,  & \alpha+\alpha'\neq 0.
\end{cases} $$\end{lemma}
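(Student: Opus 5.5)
Since $p\neq 2$, the natural route is the explicit formula for the tame Hilbert symbol already used in Lemma \ref{lem_stabili} and Lemma \ref{mlimit}: for units $u,u'$ (lifted to $\Z_p^*$) and exponents $\alpha,\alpha'\in\{0,1\}$,
$$(p^\alpha u,p^{\alpha'}u')_{\Q_p}=\Big(\frac{-1}{p}\Big)^{\alpha\alpha'}\Big(\frac{u'}{p}\Big)^{\alpha}\Big(\frac{u}{p}\Big)^{\alpha'}.$$
Because the right-hand side depends only on the reductions of $u,u'$ modulo $p$, the sum over $u,u'\in\F_p^*$ is well defined. Note that $\c A$ plays no role in the statement; it appears only because of where the lemma is used later. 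The plan is a direct evaluation of this sum, split according to the values of $(\alpha,\alpha')$.

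\emph{Case $\alpha=\alpha'=0$.} Every term equals $1$, so the sum is $(p-1)^2$.

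\emph{Case $(\alpha,\alpha')=(1,0)$.} The summand is $\big(\frac{u'}{p}\big)$. Summing over $u$ gives a factor $p-1$, and the remaining sum $\sum_{u'\in\F_p^*}\big(\frac{u'}{p}\big)$ vanishes, since $\F_p^*$ contains exactly $(p-1)/2$ squares and $(p-1)/2$ non-squares.

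\emph{Case $(\alpha,\alpha')=(0,1)$.} This is symmetric to the previous case and also gives $0$.

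\emph{Case $\alpha=\alpha'=1$.} The summand is $\big(\frac{-1}{p}\big)\big(\frac{u'}{p}\big)\big(\frac{u}{p}\big)$. The sum factors as $\big(\frac{-1}{p}\big)\big(\sum_u(\frac{u}{p})\big)\big(\sum_{u'}(\frac{u'}{p})\big)$, and each inner sum is $0$, so the total is $0$.

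No step is a genuine obstacle. The only point needing care is to justify the tame symbol formula for $p$ odd, including the sign factor $\big(\frac{-1}{p}\big)^{\alpha\alpha'}$; that formula is the standard one already invoked earlier in the paper. With it in hand, the vanishing in every case except $\alpha=\alpha'=0$ reduces to the orthogonality $\sum_{u\in\F_p^*}\big(\frac{u}{p}\big)=0$ of the nontrivial quadratic character.
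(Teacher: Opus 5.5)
Your proposal is correct and is essentially the paper's argument. The paper cites the same tame formula $(p^\alpha u,p^{\alpha'}u')_{\Q_p}=(\frac{-1}{p})^{\alpha\alpha'}(\frac{u}{p})^{\alpha'}(\frac{u'}{p})^{\alpha}$ and declares the result immediate, and your case split, which reduces each nontrivial case to $\sum_{u\in\F_p^*}(\frac{u}{p})=0$, simply writes out that computation.
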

\begin{proof}The proof is immediate by using  
$(p^\alpha u, p^{\alpha'} u')_{\Q_p}=
(\frac{-1}{p})^{\alpha \alpha'}
(\frac{u}{p})^{ \alpha'}
(\frac{u'}{p})^{\alpha}$.\end{proof}

\begin{lemma}\label{lem_anthpic2}
Let $p\neq 2 $ be a prime and 
$\c A\subset \{1,\ldots, r\}$. Then 
$$ n_p(\c A)=
 \l(1-\frac{1}{p}\r)^{-r} 
\frac{1}{2}
\l( \l(  1+\frac{1}{p} \r)^{-2 \#\c A}+
\l(  1+\frac{1}{p} \r)^{-2 (r-\#\c A)} \r) 
(1+O(p^{-m(z)})),
$$ where the implied constant depends at most on $r$.
\end{lemma}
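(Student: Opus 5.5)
We compute $n_p(\c A)$ by expanding the global condition into two multiplicative sums and factoring each over $i$. Write $k=m(z)$. Since $p\neq 2$, the valuation condition in $n_p(\c A)$ is $\max\{v_p(a_i),v_p(a'_i)\}\le k-1$. The condition $\prod_{i=1}^r (s_ia_i,s'_ia'_i)_{\Q_p}=1$ is detected by $\frac12(1+\prod_{i=1}^r h_i)$, where $h_i:=(s_ia_i,s'_ia'_i)_{\Q_p}\in\{\pm1\}$. Since $h_i^2=1$, we have $\prod_{i=1}^r h_i\prod_{i\in\c A}h_i=\prod_{i\notin\c A}h_i$. Hence
$$n_p(\c A)=\frac{1}{2(p^k\phi(p^k))^{r}}\Big(\sum_{\b a,\b a'}\prod_{i\in\c A}h_i+\sum_{\b a,\b a'}\prod_{i\notin \c A}h_i\Big),$$
with both sums over $\b a,\b a'\in(\Z/p^k\Z)^r$ subject only to the valuation bound. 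Each sum factors over $i$. We also note $p^k\phi(p^k)=p^{2k}(1-1/p)$, which produces the prefactor $(1-\frac1p)^{-r}$.

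\emph{Factors without a symbol.} For an index $i$ carrying no symbol, the local factor is $\#\{(a,a')\in(\Z/p^k\Z)^2: v_p(a),v_p(a')\le k-1\}=(p^k-1)^2=p^{2k}(1+O(p^{-k}))$.

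\emph{Factors with a symbol.} For an index $i$ carrying the symbol, write $a=p^{\alpha}u$ and $a'=p^{\alpha'}u'$ with $0\le\alpha,\alpha'\le k-1$ and $u\in(\Z/p^{k-\alpha}\Z)^*$, $u'\in(\Z/p^{k-\alpha'}\Z)^*$. The symbol $(s p^\alpha u,s'p^{\alpha'}u')_{\Q_p}$ depends only on the parities of $\alpha,\alpha'$ and on the residues of $su$ and $s'u'$ in $\F_p^*$. Multiplication by the unit $s$ (or $s'$) permutes these residues. The units modulo $p^{k-\alpha}$ are equidistributed among the residues in $\F_p^*$, each fibre having size $p^{k-\alpha-1}$. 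So the sum over $u,u'$ equals $p^{k-\alpha-1}p^{k-\alpha'-1}\sum_{u,u'\in\F_p^*}(p^{\bar\alpha}u,p^{\bar\alpha'}u')_{\Q_p}$, where $\bar\alpha,\bar\alpha'\in\{0,1\}$ are the parities. By Lemma \ref{lem_anthpic} this vanishes unless $\alpha$ and $\alpha'$ are both even. The local factor is therefore $\big(\sum_{\alpha\le k-1\text{ even}}\phi(p^{k-\alpha})\big)^2$. Summing the geometric series gives
$$\sum_{\substack{\alpha\le k-1\\ \alpha\text{ even}}}\phi(p^{k-\alpha})=p^k\Big(1-\frac1p\Big)\frac{1}{1-p^{-2}}\big(1+O(p^{-k})\big)=\frac{p^k}{1+1/p}\big(1+O(p^{-k})\big).$$
So this factor equals $p^{2k}(1+1/p)^{-2}(1+O(p^{-k}))$.

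\emph{Assembling.} The first sum carries $\#\c A$ symbol factors and $r-\#\c A$ plain factors, and the second sum the reverse. Multiplying the factors and dividing by $(p^{2k}(1-1/p))^r$ gives
$$n_p(\c A)=\Big(1-\frac1p\Big)^{-r}\frac12\Big(\Big(1+\frac1p\Big)^{-2\#\c A}+\Big(1+\frac1p\Big)^{-2(r-\#\c A)}\Big)\big(1+O(p^{-k})\big).$$
The error factors multiply to $(1+O(p^{-k}))^{2r}$, which gives an implied constant depending only on $r$.

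The only delicate point is the equidistribution step: the units modulo $p^{k-\alpha}$ must reduce uniformly onto $\F_p^*$, and the truncation $\alpha\le k-1$ must cost only a relative error $O(p^{-k})$. Both follow from elementary counting.
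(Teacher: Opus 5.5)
Your proof is correct and follows the paper's argument. The paper also uses the detector $\frac12(1+\prod_i h_i)$ to reduce $n_p(\c A)$ to the two sums over $\prod_{i\in\c A}h_i$ and $\prod_{i\notin\c A}h_i$ (its $K_p(\c A)+K_p(\c A^c)$), writes $a_i=p^{\alpha_i}u_i$, reduces the symbols to $\F_p^*$, and applies Lemma \ref{lem_anthpic} to keep only even $\alpha_i,\alpha'_i$ before summing the geometric series. Your explicit absorption of the signs $s_i,s'_i$ by a change of variables in $\F_p^*$, and your exact count $(p^k-1)^2$ for the symbol-free factors, are slightly more careful renderings of the same steps.
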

\begin{proof} Using the identity 
$\mathds 1 (h=1) =\frac{1}{2} (1+h)$ for any 
$h\in \{1,-1\}$ we infer that  
\begin{equation}\label{arianistes} n_p(\c A)=
p^{-2rm(z)} \l(1-\frac{1}{p}\r)^{-r}
\l(\frac{K_p(\c A)+K_p(\c A^c)}{2}\r) 
,\end{equation}
where for a subset $\c S \subset \{1,\ldots, r\}$ we denote 
$$K_p(\c S):=
\sum_{\substack{ \b a ,\b a' \in (\Z/p^{m(z)}\Z)^r 
\\ \max\{v_p(a_i),v_p(a'_i)\}\leq  m(z)-1}} 
\prod_{i \in \c S}(s_ia_i,s'_ia'_i)_{\Q_p}.$$
We write  $a_i=p^{\alpha_i} u_i,a'_i=p^{\alpha'_i} u'_i$
with $p\nmid u_i,u'_i$  so that 
$$ K_p(\c S)= 
\sum_{\boldsymbol \alpha, \boldsymbol \alpha'\in [0,m(z)-1]^r }
\sum_{\substack{  
u_i \in (\Z/p^{m(z)-\alpha_i}\Z)^* \forall i\\ 
u'_i \in (\Z/p^{m(z)-\alpha'_i}\Z)^* \forall i }} 
\prod_{i \in \c S}(s_i p^{\alpha_i} u_i ,
s'_i p^{\alpha'_i} u'_i )_{\Q_p}.$$ 
Since the Hilbert symbols in the sums depend only on 
$u_i,u'_i \md p$ we see that $$ K_p(\c S)= p^{2r(m(z) -1) }
\sum_{\boldsymbol \alpha, \boldsymbol \alpha'\in [0,m(z)-1]^r }
p^{  -\sum_{i=1}^r (\alpha_i+\alpha'_i)}
\sum_{ \b u, \b u'\in (\F_p^*)^r } 
\prod_{i \in \c S}(s_i p^{\alpha_i} u_i ,
s'_i p^{\alpha'_i} u'_i )_{\Q_p}.$$ 
The sum over $\b u, \b u'$ splits as a product over all $1\leq i \leq r $ of sums over $u_i,u'_i$. When $i\notin S$ then 
there is no Hilbert symbol, hence, the sum equals $(p-1)^2$.
When $i\in \c S$ the sum vanishes by Lemma \ref{lem_anthpic} 
except when $2\mid (\alpha_i,\alpha'_i)$. Hence, 
$$ K_p(\c S)= p^{2r(m(z) -1) }(p-1)^{2r}
\sum_{\substack{
\boldsymbol \alpha, \boldsymbol \alpha'\in [0,m(z)-1]^r\\
i\in \c S\Rightarrow 2\mid (\alpha,\alpha'_i)
} }
p^{  -\sum_{i=1}^r (\alpha_i+\alpha'_i)}.$$ The sum over 
$\boldsymbol \alpha, \boldsymbol \alpha'$ equals 
$$
\l(  1-\frac{1}{p}  \r)^{-2 (r-\#\c S)}
\l(  1-\frac{1}{p^2} \r)^{-2 \#\c S}
(1+O(p^{-m(z)}))=\l(  1-\frac{1}{p} \r)^{-2 r}
\l(  1+\frac{1}{p} \r)^{-2 \#\c S}
(1+O(p^{-m(z)}))
,$$ hence, $$ K_p(\c S)= p^{2r m(z)  } 
\l(  1+\frac{1}{p} \r)^{-2 \#\c S}
(1+O(p^{-m(z)}))
.$$ Thus, by \eqref{arianistes} we conclude the proof.
\end{proof}

\begin{lemma}\label{lem_anthpicts2p}
For $\c A\subset \{1,\ldots, r\}$, $s,s'\in \{-1,1\}$
and  $\alpha, \alpha'\in \{0,1\}$ 
we have $$\sum_{u,u'\in (\Z/8\Z)^*} 
(2^\alpha su, 2^{\alpha'} s'u')_{\Q_2}=
\begin{cases} 8,  & \alpha+\alpha'=0, \\
0,  & \alpha+\alpha'\neq 0.
\end{cases} $$\end{lemma}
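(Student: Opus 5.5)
The plan is a direct computation with the explicit formulas for the Hilbert symbol over $\Q_2$, in the spirit of Lemma \ref{lem_anthpic}. For units $u,v\in\Z_2^*$ I would use $(u,v)_{\Q_2}=(-1)^{\varepsilon(u)\varepsilon(v)}$ and $(2,v)_{\Q_2}=(-1)^{\omega(v)}$, where $\varepsilon(u)\equiv (u-1)/2$ and $\omega(v)\equiv (v^2-1)/8 \md 2$. I would also use $(2,2)_{\Q_2}=(2,-1)_{\Q_2}=1$ and bimultiplicativity. Both $\varepsilon$ and $\omega$ depend only on the class of a unit modulo $8$, so the summand is well defined on $(\Z/8\Z)^*\times(\Z/8\Z)^*$. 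Since $s,s'\in\{\pm1\}$ are units, the substitutions $w=su$ and $v=s'u'$ permute $(\Z/8\Z)^*$. This removes the signs, so the sum becomes
$$\sum_{w,v\in(\Z/8\Z)^*}(2^\alpha w,2^{\alpha'}v)_{\Q_2}.$$
The statement does not involve $\c A$ at all.

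I would then expand the symbol by bimultiplicativity as
$$(2^\alpha w,2^{\alpha'}v)_{\Q_2}=(-1)^{\alpha'\omega(w)+\alpha\omega(v)+\varepsilon(w)\varepsilon(v)},$$
using $(2,2)_{\Q_2}=1$. It remains to record the table on $\{1,3,5,7\}$: $\varepsilon$ takes the values $0,1,0,1$ and $\omega$ takes the values $0,1,1,0$.

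\emph{Case $\alpha=\alpha'=0$.} The sum is $\sum_{w,v}(-1)^{\varepsilon(w)\varepsilon(v)}$. Of the $16$ pairs, exactly $4$ have $\varepsilon(w)=\varepsilon(v)=1$, so the sum is $12-4=8$.

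\emph{Case $\alpha=1,\alpha'=0$, and symmetrically $\alpha=0,\alpha'=1$.} Fix $v$ and sum over $w$. If $\varepsilon(v)=1$, the inner sum is $(-1)^{\omega(v)}\sum_w(-1)^{\varepsilon(w)}=0$. If $\varepsilon(v)=0$, then $v\in\{1,5\}$ and the inner sum is $4(-1)^{\omega(v)}$; these two contributions, $+4$ for $v=1$ and $-4$ for $v=5$, cancel. So the total is $0$.

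\emph{Case $\alpha=\alpha'=1$.} Again fix $v$. The inner sum over $w$ is $(-1)^{\omega(v)}\sum_w(-1)^{\omega(w)+\varepsilon(w)\varepsilon(v)}$. When $\varepsilon(v)=0$ this is a multiple of $\sum_w(-1)^{\omega(w)}=1-1-1+1=0$. When $\varepsilon(v)=1$ it is a multiple of $\sum_w(-1)^{\omega(w)+\varepsilon(w)}=1+1-1-1=0$. So the total is $0$.

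There is no real obstacle here. The only point requiring care is to use the correct $2$-adic formulas, in particular $(2,2)_{\Q_2}=1$ and the fact that $\omega$ and $\varepsilon$ are nontrivial characters of $(\Z/8\Z)^*$, with $\omega+\varepsilon$ also nontrivial. The vanishing in the last three cases is exactly the orthogonality of these nontrivial characters. The value $8$ in the first case, unlike the $(p-1)^2$ in Lemma \ref{lem_anthpic}, reflects that $(\cdot,\cdot)_{\Q_2}$ restricted to units is itself a nontrivial pairing.
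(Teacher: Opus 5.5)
Your proposal is correct and follows essentially the paper's own proof. Both use the invertible substitution $(su,s'u')\mapsto(w,v)$ to remove the signs and the explicit $2$-adic formulas for the Hilbert symbol, including $(2,2)_{\Q_2}=1$. Both also use the same three-case split, in which the cases $\alpha+\alpha'\neq 0$ vanish because, for each fixed $v$, the summand is a non-principal character modulo $8$ in $w$.
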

\begin{proof} The change of variables 
$(su,s'u')\equiv (x,x') \md 8$ is invertible, hence, the sum equals 
$$\sum_{x,x'\in (\Z/8\Z)^*} 
(2^\alpha x, 2^{\alpha'} x')_{\Q_2}.$$
If $\alpha =\alpha'=0$ then 
$(2^\alpha x, 2^{\alpha'} x')_{\Q_2}$ equals $1$ 
except when both $x,x'$ are $3\md 4$, hence, the value of 
the sum is $8$. If $\alpha=0, \alpha'=1$, then 
the sum becomes $$\sum_{x,x'\in (\Z/8\Z)^*} 
(-1)^{\frac{(x-1)(x'-1)}{4}}
\l(\frac{2}{x}\r)
= \sum_{x\in \{1,7\}, \x'\in (\Z/8\Z)^*} 
(-1)^{\frac{(x-1)(x'-1)}{4}}
-\sum_{x\in \{3,5\}, \x'\in (\Z/8\Z)^*} 
(-1)^{\frac{(x-1)(x'-1)}{4}}
$$ which equals $4-4=0$. In the last case 
$\alpha= \alpha'=1$, the sum can be written as $$ 
\sum_{x, x'\in (\Z/8\Z)^*} 
(-1)^{\frac{(x-1)(x'-1)}{4}}
\l(\frac{2}{x}\r) \l(\frac{2}{x'}\r)
.$$ For each fixed $x'\in (\Z/8\Z)^*$
the function $f(x)=(-1)^{\frac{(x-1)(x'-1)}{4}}
(\frac{2}{x}) $ is a non-principal character 
modulo $8$, hence, the sum above vanishes.\end{proof}

\begin{lemma}\label{lem_anthpic234}
For all $\c A\subset \{1,\ldots, r\}$ we have 
$$ n_2(\c A) =  2^{r-1}
 ((2/9)^{-\#\c A}+(2/9)^{r-\#\c A} ) 
 (1+O(2^{-m(z)})),
$$ where the implied constant depends at most on $r$.
\end{lemma}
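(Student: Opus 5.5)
Mirror the proof of Lemma~\ref{lem_anthpic2}, with Lemma~\ref{lem_anthpicts2p} taking over the role of Lemma~\ref{lem_anthpic}. Write $m=m(z)$. Using $\mathds 1(h=1)=\frac12(1+h)$ as in \eqref{arianistes}, write $n_2(\c A)$ as $\frac12(2^m\phi(2^m))^{-r}\bigl(K_2(\c A)+K_2(\c A^c)\bigr)$. Here $K_2(\c S)$ is the sum over $\b a,\b a'\in(\Z/2^m\Z)^r$ with $\max\{v_2(a_i),v_2(a'_i)\}\le m-3$ of $\prod_{i\in\c S}(s_ia_i,s'_ia'_i)_{\Q_2}$. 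The complement $\c A^c$ arises because multiplying the global product by $\prod_{i\in\c A}$ gives $\prod_{i\notin\c A}$. Since $(2^m\phi(2^m))^{-r}=2^{-2rm}\cdot 2^{r}$, the prefactor $2^{r-1}$ should emerge naturally.

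Next, write $a_i=2^{\alpha_i}u_i$ and $a'_i=2^{\alpha'_i}u'_i$ with $u_i$ odd. The restriction $\alpha_i,\alpha'_i\le m-3$ ensures that $u_i$ is determined modulo $2^{m-\alpha_i}$ with $m-\alpha_i\ge 3$. The $2$-adic Hilbert symbol depends only on $u_i,u'_i \bmod 8$ and on the parities of $\alpha_i,\alpha'_i$. Lifting from $(\Z/8\Z)^*$ gives each class multiplicity $2^{m-\alpha_i-3}$. Hence
\[
K_2(\c S)=2^{2r(m-3)}\sum_{\boldsymbol\alpha,\boldsymbol\alpha'}2^{-\sum_i(\alpha_i+\alpha'_i)}\prod_{i=1}^r\Sigma_i ,
\]
where $\Sigma_i$ is the sum over $u_i,u'_i\in(\Z/8\Z)^*$, equal to $16$ if $i\notin\c S$. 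For $i\in\c S$, Lemma~\ref{lem_anthpicts2p}, applied after reducing the exponents modulo $2$ (powers of $2$ with even exponent are squares, and the symbol is bimultiplicative), gives $8$ when both $\alpha_i,\alpha'_i$ are even and $0$ otherwise.

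The geometric sums then contribute $(1-\frac12)^{-1}=2$ per unrestricted exponent and $(1-\frac14)^{-1}=\frac43$ per even-restricted exponent, each up to $1+O(2^{-m})$ from truncation at $m-3$. This yields
\[
K_2(\c S)=2^{2rm-6r}\,(16\cdot 4)^{r-\#\c S}\,\Bigl(8\cdot\tfrac{16}{9}\Bigr)^{\#\c S}\bigl(1+O(2^{-m})\bigr)=2^{2rm}\,(2/9)^{\#\c S}\bigl(1+O(2^{-m})\bigr).
\]
Substituting into the expression for $n_2(\c A)$ produces $2^{r-1}\bigl((2/9)^{\#\c A}+(2/9)^{r-\#\c A}\bigr)(1+O(2^{-m}))$. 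This matches the shape of $c_r$; the displayed exponent $-\#\c A$ in the statement is presumably a typo for $+\#\c A$, and I would prove the symmetric version.

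The main obstacle is the bookkeeping at $p=2$. Two points need care: checking that the Hilbert symbol really depends only on $u\bmod 8$ and on the parity of $\alpha$, which needs the $m-3$ cutoff; and confirming that the signs $s_i,s'_i$ are harmless, which Lemma~\ref{lem_anthpicts2p} already absorbs. I would verify the constant $8$ from Lemma~\ref{lem_anthpicts2p} against the count of pairs in $(\Z/8\Z)^*$: there are $16$ pairs, of which $12$ give $+1$ and $4$ give $-1$, so the sum is $8$.
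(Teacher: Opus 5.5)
Your proposal is correct and follows the paper's proof essentially step for step. Both arguments split via $\mathds 1(h=1)=\frac12(1+h)$ into $N_2(\c A)+N_2(\c A^c)$ and reduce the unit parts modulo $8$ with multiplicity $2^{2r(m-3)-\sum_i(\alpha_i+\alpha'_i)}$. Both then use Lemma~\ref{lem_anthpicts2p} to force even exponents for $i\in\c S$, and evaluate the truncated geometric sums to get $2^{2rm}(2/9)^{\#\c S}(1+O(2^{-m}))$.

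You are also right that the exponent $-\#\c A$ in the statement is a typo. The paper's own proof yields $(2/9)^{\#\c A}+(2/9)^{r-\#\c A}$, and this is the form used in $\mathfrak S(k,r)$ and $c_r$.
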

 \begin{proof}
 We start by writing  
\begin{equation}\label{divine aria_Atto Primo: Scena 6 - No. 5 Aria (Paisiello)  Nina O Sia La Pazza Per Amore} n_2(\c A)=
2^{-2rm(z)+r}\l(\frac{N_2(\c A)+N_2(\c A^c)}{2}\r) 
,\end{equation}
where for a subset $\c S \subset \{1,\ldots, r\}$ we denote 
$$N_2(\c S):=
\sum_{\substack{ \b a ,\b a' \in (\Z/2^{m(z)}\Z)^r 
\\ \max\{v_2(a_i),v_2(a'_i)\}\leq  m(z)-3}} 
\prod_{i \in \c S}(s_ia_i,s'_ia'_i)_{\Q_2}.$$
We can then express this as  
$$ N_2(\c S)= 
\sum_{\boldsymbol \alpha, \boldsymbol \alpha'\in [0,m(z)-3]^r }
\sum_{\substack{  
u_i \in (\Z/2^{m(z)-\alpha_i}\Z)^* \forall i\\ 
u'_i \in (\Z/2^{m(z)-\alpha'_i}\Z)^* \forall i 
}} 
\prod_{i \in \c S}
(s_i 2^{\alpha_i} u_i ,s'_i
2^{\alpha'_i} u'_i )_{\Q_2}.$$ 
By standard properties of the Hilbert symbol
in $\Q_2$, they only depend on $u_i,u'_i \md 8$. 
Hence, $$ N_2(\c S)= 
\sum_{\boldsymbol \alpha, \boldsymbol \alpha'\in [0,m(z)-3]^r }
2^{2r(m-3)-\sum_{i=1}^r (\alpha_i+\alpha'_i)}
\sum_{\substack{  
u_i \in (\Z/8\Z)^* \forall i\\ 
u'_i \in (\Z/8\Z)^* \forall i 
}} \prod_{i \in \c S}
(s_i 2^{\alpha_i} u_i ,s'_i
2^{\alpha'_i} u'_i )_{\Q_2}.$$ 
The inner sum over $\b u, \b u'$
splits as a product of sums over pairs $(u_i,u'_i)$.
By Lemma \ref{lem_anthpicts2p} when $i\in \c S$ 
this sum vanishes except when both $\alpha_i,\alpha'_i$ 
are even. Hence,  $$ N_2(\c S)= 2^{2rm(z)-6r}
\sum_{\substack{
\boldsymbol \alpha, \boldsymbol \alpha'\in [0,m(z)-3]^r \\
i\in \c S \Rightarrow 2\mid (\alpha_i,\alpha'_i)}}
2^{ -\sum_{i=1}^r (\alpha_i+\alpha'_i)} 8^{\#\c S}
16^{r-\#\c S}= 
2^{2rm(z)}  (2/9)^{\#\c S} (1+O(2^{-m(z)}))
.$$ Feeding this into
\eqref{divine aria_Atto Primo: Scena 6 - No. 5 Aria (Paisiello)  Nina O Sia La Pazza Per Amore} concludes the proof. \end{proof}
We obtain the leading constant  
$$ c_r=2^{r-1}
\sum_{k=0}^r {r\choose k}
a(k,r)
\mathfrak S(k,r)
,$$ 
where 
$$a(k,r):= 
 \sum_{\substack{ \b s,\b s'\in \{-1,1\}^r\\ 
\prod_{i=1}^r (s_i,s'_i)_\R=1 }}  
\sum_{\substack{ \c A \subset \{1,\ldots, r\}
\\
 \#\c A=k  }} 
\l(\prod_{i \in \c A} (s_i,s'_i)_\R\r)$$
and 
$$ \mathfrak S(k,r):=((2/9)^{k}+(2/9)^{r-k} ) 
\prod_{p\neq 2 } 
 \l(1-\frac{1}{p}\r)^{-r} 
\frac{1}{2}
\l( \l(  1+\frac{1}{p} \r)^{-2 k}+
\l(  1+\frac{1}{p} \r)^{-2 (r-k)} \r) .$$

\begin{lemma} \label{eq:substack} For all integers $k,r$ with 
$0\leq k \leq r $ we have 
$$a(k,r)=\frac{1}{2}  {r \choose k }
 \l( 4^{r-k} 2^k + 2^{r-k} 4^k  \r) .$$\end{lemma}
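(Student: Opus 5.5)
Fix $\c A\subset\{1,\ldots,r\}$ with $\#\c A=k$. The plan is to detect the global real condition with the identity $\mathds 1(h=1)=\frac12(1+h)$ for $h\in\{\pm1\}$, which factorises the sum over $(\b s,\b s')$ coordinate by coordinate. Write $h_i:=(s_i,s'_i)_\R$. Then
$$\sum_{\substack{\b s,\b s'\in\{-1,1\}^r\\ \prod_i h_i=1}}\prod_{i\in\c A}h_i=\frac12\sum_{\b s,\b s'}\prod_{i\in\c A}h_i+\frac12\sum_{\b s,\b s'}\prod_{i\in\c A}h_i\prod_{i=1}^r h_i .$$
The second product equals $\prod_{i\notin\c A}h_i$, because $h_i^2=1$.

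Each of the two full sums over $(\b s,\b s')\in(\{-1,1\}^2)^r$ splits as a product over $i$ of sums over the pair $(s_i,s'_i)$. There are two single-coordinate sums to compute. First, $\sum_{s,s'\in\{\pm1\}}1=4$. Second, $\sum_{s,s'}(s,s')_\R=1+1+1-1=2$, since $(s,s')_\R=-1$ exactly when $s=s'=-1$. It follows that the first sum is $2^k4^{r-k}$ and the second is $4^k2^{r-k}$.

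Hence the contribution of each $\c A$ with $\#\c A=k$ is $\frac12(4^{r-k}2^k+2^{r-k}4^k)$. This depends only on $k$, so summing over the ${r\choose k}$ subsets of size $k$ gives the claimed formula for $a(k,r)$.

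There is no real obstacle here. The only point needing care is the evaluation $(-1,-1)_\R=-1$, with all other real Hilbert symbols of signs equal to $1$, together with the symmetry that converts $\prod_{i\in\c A}h_i\cdot\prod_i h_i$ into $\prod_{i\in\c A^c}h_i$.
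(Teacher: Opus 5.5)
Your proof is correct and is essentially the paper's argument: both detect the condition $\prod_i (s_i,s'_i)_\R=1$ with $\frac12\bigl(1+\prod_i h_i\bigr)$, and both then factor coordinatewise using $\sum_{s,s'}1=4$ and $\sum_{s,s'}(s,s')_\R=2$. The only difference is bookkeeping: you fix $\c A$ and swap the order of summation, whereas the paper treats all $\c A$ with $\#\c A=k$ at once by extracting the coefficient of $t^k$ from $\frac12\bigl((4+2t)^r+(2+4t)^r\bigr)$.
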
\begin{proof}
For any $c_i \in \mathbb C$  
the coefficient of $t^k$ in  $\prod_{i=1}^r (1+ c_i t)$ is 
$$ \sum_{\substack{ \c A \subset \{1,\ldots, r\} \\ \#\c A=k }}
\prod_{i\in \c A} c_i.$$ Using this for $c_i=(s_i,s'_i)_\R$
we infer that $$\sum_{\substack{ \c A \subset \{1,\ldots, r\}
\\ \#\c A=k  }} 
\l(\prod_{i \in \c A} (s_i,s'_i)_\R\r)$$ is the coefficient of 
$t^k$ in  $\prod_{i=1}^r (1+ (s_i,s'_i)_\R t)$. Hence, 
$a(k,r)$ is the coefficient of 
$t^k$ in $$\sum_{\substack{ \b s,\b s'\in \{-1,1\}^r\\ 
\prod_{i=1}^r (s_i,s'_i)_\R=1 }}  
\prod_{i=1}^r (1+ (s_i,s'_i)_\R t) =\frac{P_1(t)+P_2(t)}{2}, $$ where 
$$P_1(t):= \sum_{\b s,\b s'\in \{-1,1\}^r}  
\prod_{i=1}^r (1+ (s_i,s'_i)_\R t), \ \ \ 
P_2(t):=   \sum_{ \b s,\b s'\in \{-1,1\}^r  }  
\prod_{i=1}^r (s_i,s'_i)_\R
(1+ (s_i,s'_i)_\R t).$$ Both polynomials
split as products of sums over pairs $(s_i,s'_i)$.
In particular, $$P_1(t)=\prod_{i=1}^r
\sum_{(s_i, s'_i)\in \{-1,1\}^2 }  
 (1+ (s_i,s'_i)_\R t) =(4+2t)^r
=\sum_{k=0} {r \choose k } 4^{r-k} 2^k t^k $$ and 
\[P_2(t)=\prod_{i=1}^r
\sum_{(s_i, s'_i)\in \{-1,1\}^2 }  
(s_i,s'_i)_\R  (1+ (s_i,s'_i)_\R t) = (2+4t)^r =\sum_{k=0}
{r \choose k } 2^{r-k} 4^k t^k.\qedhere\]
\end{proof}
This concludes the proof of the explicit expression for 
$c_r$.

We lower bound $c_r$ by the term coming from    
$k=\lfloor r/2\rfloor$. Stirling's inequalities   
show that  
$$  { r\choose \lfloor r/2\rfloor }\geq  \frac{2^r}{2\sqrt{r}} .$$
In addition, note that 
$\min\{x^{k}+x^{r-k}: k\in \R\cap [0,r]\} = 2x^{r/2}$ holds for 
$x>0$. Hence,  
$$ c_r \geq   
\frac{2^{r-2}}{\pi^r}
  \frac{4^r}{4r}
(2^r\cdot 2 \cdot  2^{r/2})
(2\cdot  (2/9)^{r/2} ) 
\prod_{p\neq 2 } 
 \l(1-\frac{1}{p^2}\r)^{-r}    
 = 
  \frac{  \zeta(2)^r }{4r}
 \l( \frac{2^3}{\pi}\r)^r 
   = 
  \frac{1 }{4r}
 \l( \frac{4 \pi }{  3 }\r)^r .$$

\end{document}